\def\RR{\mathbb{R}}
\def\on{\big|}
\newcommand{\bfm}[1]{\bm{#1}}
\newcommand{\mc}[1]{\mathcal{#1}}
\newcommand{\defset}[2]{\left\{ #1: \ #2 \right\}}
\newcommand{\sprod}[2]{\left\langle#1, #2\right\rangle}
\newcommand{\norm}[1]{\| #1 \|}
\newcommand{\T}{\mc{T}}
\newcommand{\Q}{\mc{Q}}
\newcommand{\C}[1]{{C}^{#1}}
\newcommand{\G}[1]{{G}^{#1}}
\newcommand{\bV}{\bfm{{V}}}
\newcommand{\bR}{\bfm{{R}}}
\newcommand{\bS}{\bfm{{S}}}
\newcommand{\bQ}{\bfm{\mathrm{Q}}}
\newcommand{\bE}{\boldsymbol{\mc{E}}}
\newcommand{\V}[1]{\bV^{(#1)}}
\newcommand{\pd}{p}
\newcommand{\poly}{\mathbb P}
\newcommand{\bern}[2]{B^{#1}_{#2}}
\newcommand{\bernT}[2]{B^{\mbox{{\tiny{$\triangle$}}}, #1}_{#2}}
\newcommand{\bernB}[2]{{B}^{#1}_{#2}}
\newcommand{\w}{\omega}
\newcommand{\parDomT}{\triangle_0}
\newcommand{\parDomQ}{\Box_0}
\newcommand{\parDer}[2]{D_{#1}^{#2}}
\newcommand{\Der}[2]{D_{#1}^{#2}}
\newcommand{\grad}[1]{\mbox{grad}\, #1}
\newcommand{\jac}[1]{{\rm J} {#1}}
\newcommand{\Hess}[1]{\mbox{H} {#1}}
\newcommand{\orth}[1]{{#1}^{\perp}}
\newcommand{\proj}{\Pi}
\newcommand{\indOmega}{\mathcal{I}_{\Omega}}
\newcommand{\indE}{\mathcal{I}_{\mc E}}
\newcommand{\indV}{\mathcal{I}_{\bfm{V}}}
\newcommand{\indQ}{\mathcal{I}_{\mbox{{\tiny{$\Box$}}}}}
\newcommand{\indT}{\mathcal{I}_{\mbox{{\tiny{$\triangle$}}}}}
\newcommand{\A}{\mathcal{A}}
\newcommand{\sigmaBox}{\sigma^{\mbox{{\tiny{$\Box$}}}}}
\newcommand{\sigmaTri}{\sigma^{\mbox{{\tiny{$\triangle$}}}}}
\newtheorem{definition}{Definition}
\newtheorem{theorem}{Theorem}
\newtheorem{proposition}{Proposition}
\newtheorem{lemma}{Lemma}
\newtheorem{corollary}{Corollary}
\newtheorem{remark}{Remark}
\journal{}
\begin{document}

\begin{frontmatter}


\title{A super-smooth $\C{1}$ spline space over planar mixed triangle and quadrilateral meshes} 

\author[UL,AB,UR]{Jan Gro\v{s}elj}
\ead{jan.groselj@fmf.uni-lj.si}
\author[vil,RICAM]{Mario Kapl}
\ead{m.kapl@fh-kaernten.at}
\author[UL,UR]{Marjeta Knez}
\ead{marjetka.knez@fmf.uni-lj.si}
\author[JKU]{\corref{cor}Thomas Takacs}
\ead{thomas.takacs@jku.at}
\author[UP]{Vito Vitrih}
\ead{vito.vitrih@upr.si}
\address[UL]{FMF, University of Ljubljana, Jadranska 19, 1000 Ljubljana, Slovenia}
\address[AB]{Abelium d.o.o., Kajuhova 90, 1000 Ljubljana, Slovenia}
\address[UR]{IMFM, Jadranska 19, 1000 Ljubljana, Slovenia}
\address[vil]{Department of Engineering $\&$ IT, Carinthia University of Applied Sciences, Europastra\ss{}e 4, 9524 Villach, Austria}
\address[RICAM]{RICAM, Austrian Academy of Sciences, Altenberger Str. 69, 4040 Linz, Austria}
\address[JKU]{Institute of Applied Geometry, Johannes Kepler University Linz, Altenberger Str. 69, 4040 Linz, Austria}
\address[UP]{UP FAMNIT and UP IAM, University of Primorska, Glagolja\v ska 8, 6000 Koper, Slovenia}

\cortext[cor]{Corresponding author.}

\begin{abstract}
In this paper we introduce a $C^1$ spline space over mixed meshes composed of triangles and quadrilaterals, suitable for FEM-based or isogeometric analysis. In this context, a mesh is considered to be a partition of a planar polygonal domain into triangles and/or quadrilaterals. The proposed space combines the Argyris triangle, cf.~\cite{ArFrSc68}, with 
the $C^1$ quadrilateral element introduced in~\cite{BrSu05,KaSaTa20} for polynomial degrees $\pd\geq 5$. 
The space is assumed to be $C^2$ at all vertices and $C^1$ across edges,  and the splines are uniquely determined by $C^2$-data at the vertices, values and normal derivatives at chosen points on the edges, and values at some additional points in the interior of the elements.

The motivation for combining the Argyris triangle element with a recent $C^1$ quadrilateral construction, inspired by isogeometric analysis, is two-fold: on one hand, the 
ability to connect triangle and quadrilateral finite elements in a $C^1$ fashion is non-trivial and of theoretical interest. We provide not only approximation error bounds 
but also numerical tests verifying the results. On the other hand, the construction facilitates the meshing process by allowing more flexibility while remaining $C^1$ everywhere. 
This is for instance relevant when trimming of tensor-product B-splines is performed.

In the presented construction we assume to have (bi)linear element mappings and piecewise polynomial function spaces of arbitrary degree $\pd\geq 5$. The basis is simple to 
implement and the obtained results are optimal with respect to the mesh size for $L^\infty$, $L^2$ as well as Sobolev norms $H^1$ and $H^2$.
\end{abstract}

\begin{keyword}
$C^1$ discretization \sep Argyris triangle \sep $C^1$ quadrilateral element \sep mixed triangle and quadrilateral mesh
\end{keyword}

\end{frontmatter}

\section{Introduction}
\label{sec:intro}

Isogeometric analysis (IGA) was introduced in~\cite{HuCoBa05} to apply numerical analysis directly to the B-spline or NURBS representation of CAD models. Since IGA is based on 
B-spline representations, it is capable to generate smooth discretizations, which are used to discretize higher order PDEs, e.g.~\cite{TaDeQu14}. Applications which need 
higher order smoothness (at least $C^1$) are, e.g. Kirchhoff--Love shell formulations~\cite{KiBlLi09,KiBaHs10}, the Navier--Stokes--Korteweg equation~\cite{GoHuNo10}, or the 
Cahn--Hilliard equation~\cite{GoCaBa08}.

CAD models are composed of several B-spline or NURBS patches, which are smooth in the interior. To obtain higher order smoothness for complicated geometries one has to 
additionally impose smoothness across patch interfaces. One possibility is a manifold-like setting, which merges two patches across an interface in a $C^k$ fashion, 
creating overlapping charts, and remains $C^0$ near extraordinary vertices, where several patches meet, see~\cite{BuJuMa15,SaTaVa16}. Such approaches need to be modified to 
increase the smoothness at extraordinary points. One may introduce singularities and define a suitable, locally modified space~\cite{NgKaPe14,NgPe16,ToSpHu17}.

These approaches are strongly related to constructions based on subdivision surfaces~\cite{PeRe08}, such as~\cite{RiAuFe16,ZhSaCi18} based on Catmull--Clark subdivision over 
quadrilateral meshes or~\cite{CiOrSch00,JuMaPeRu16} based on Loop subdivision over triangle meshes.

Smooth function spaces over general quadrilateral meshes for surface design predate IGA, such as~\cite{GrMa87,Pe90,Re95}. These constructions rely on the concept of 
geometric continuity, that is, in the case of $G^1$, surfaces that are tangent continuous without having a $C^1$ parametrization. Also before IGA (or around the same time), 
several approaches were developed for numerical analysis of higher order problems over quad meshes, such as the Bogner--Fox--Schmit element~\cite{BoFoSc65}, the elements 
developed by Brenner and Sung~\cite{BrSu05} for $p\geq6$, or the constructions in~\cite{Ma01,BeMa14}. See Figure~\ref{fig:elements} for a visualization of the Bogner--Fox--Schmit and Brenner--Sung elements. Recently, a family of $C^1$ quadrilateral finite elements was 
described in~\cite{KaSaTa20}.

\begin{figure}[htb]
\centering\footnotesize
\includegraphics[height=.2\textheight,clip]{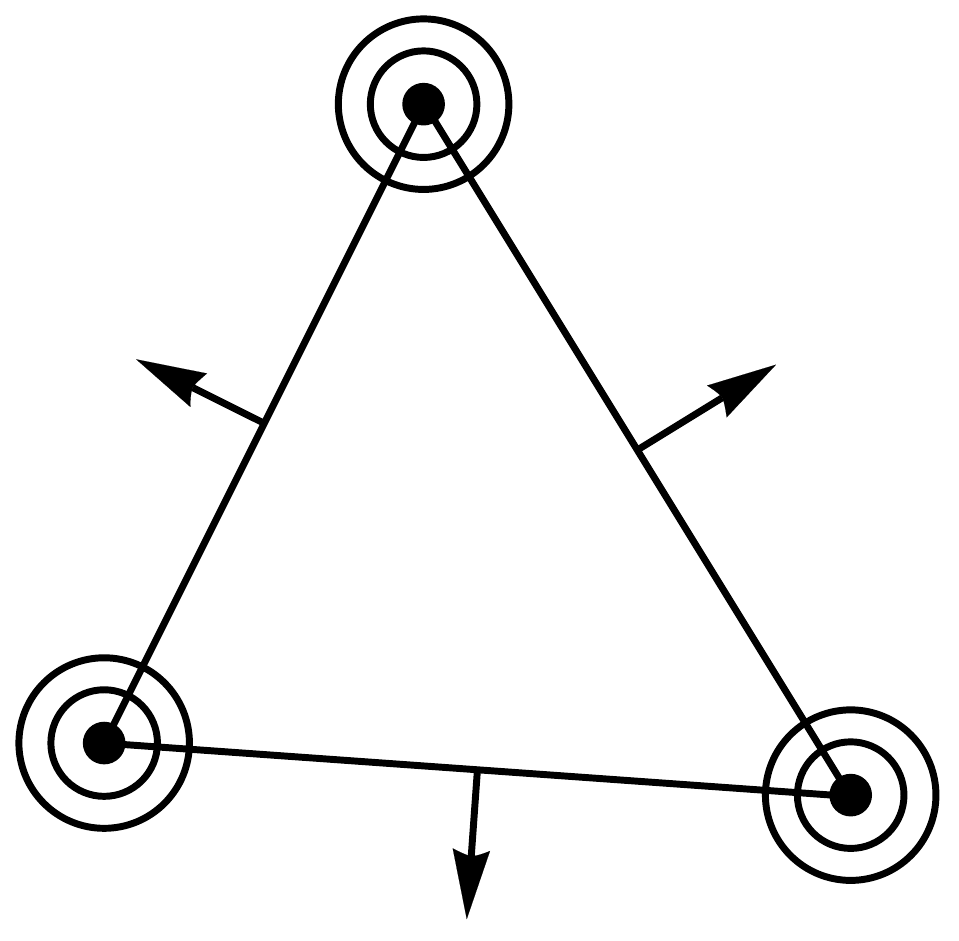} \qquad
\includegraphics[height=.2\textheight]{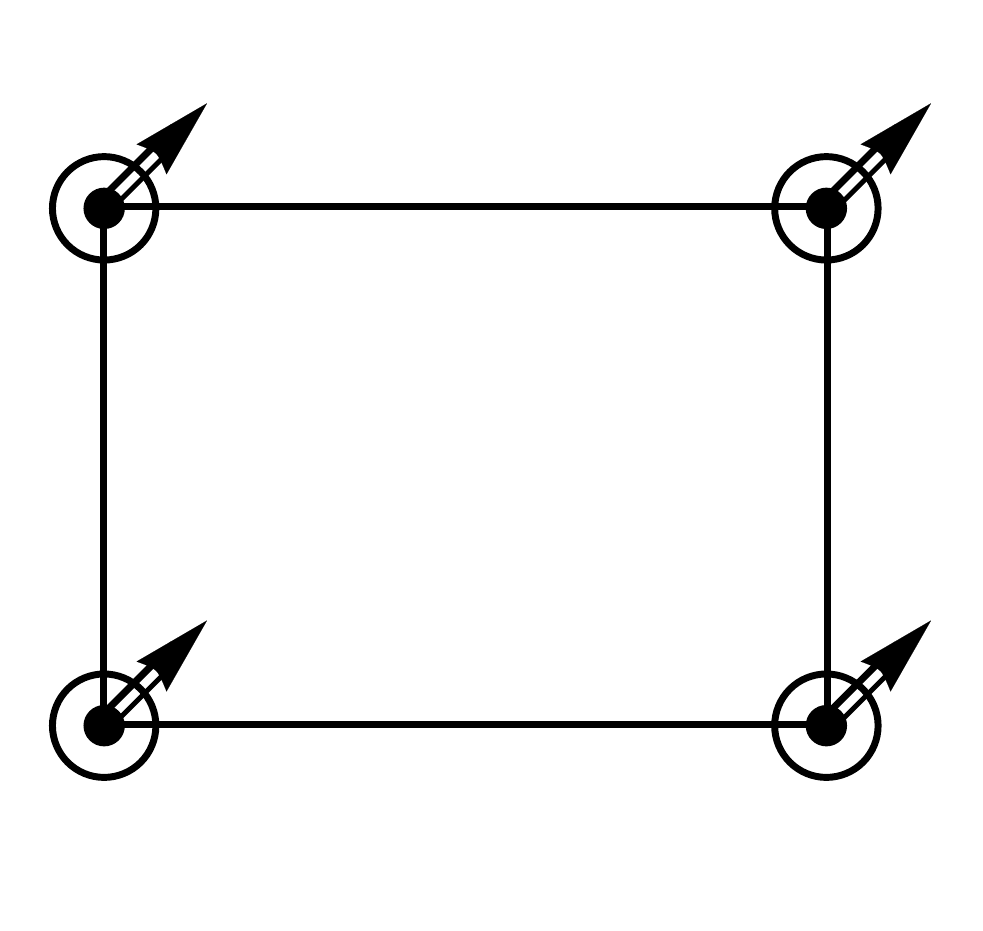} \qquad
\includegraphics[height=.2\textheight,clip]{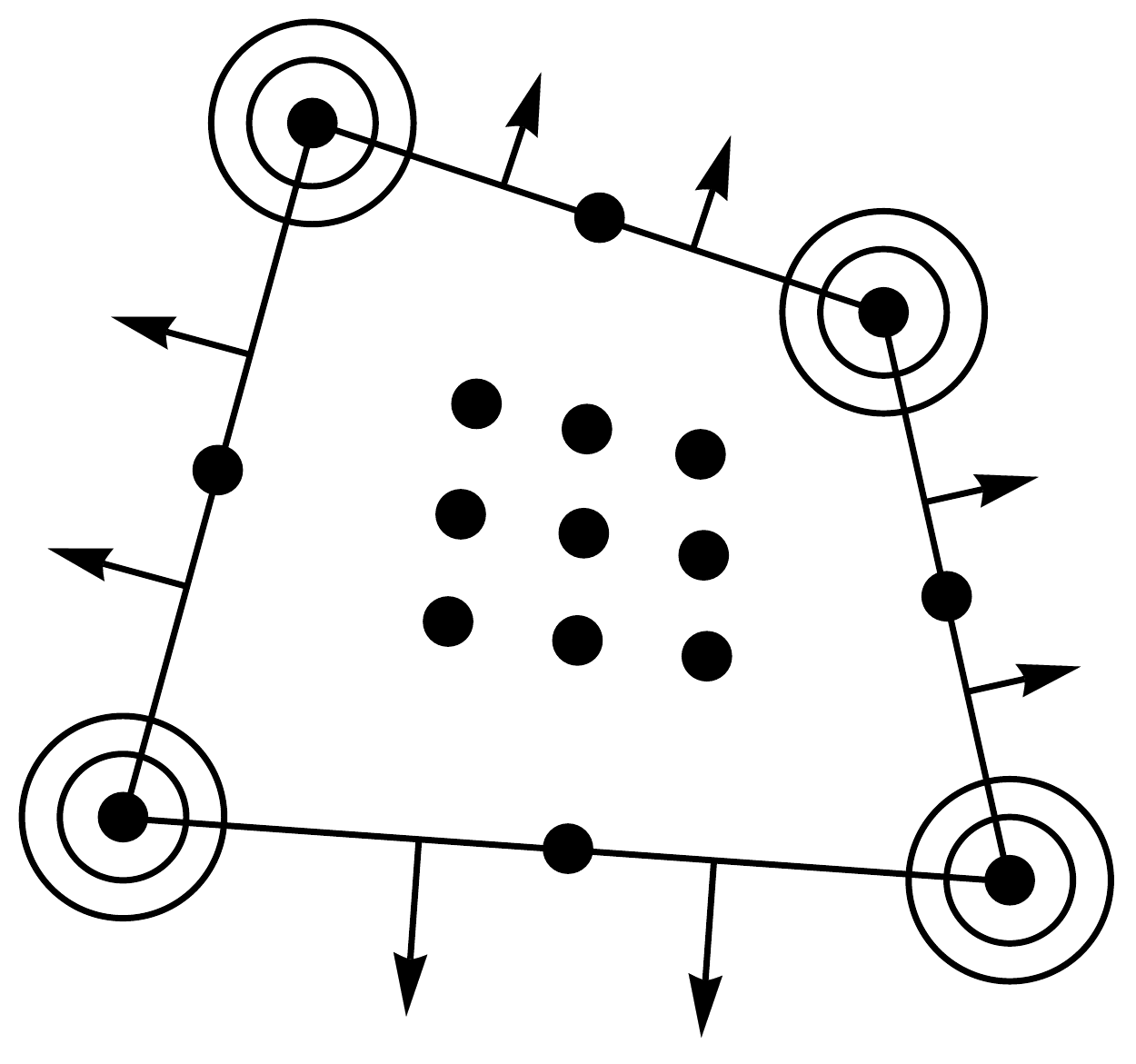} 
\caption{The Argyris element of degree $5$ (left), the Bogner--Fox--Schmit element of bi-degree $3$ (center) and the Brenner--Sung element of bi-degree $6$ (right). Bullets, small circles and large circles denote the interpolation of function values, gradients and Hessians, respectively. A simple arrow denotes the evaluation of a normal derivative and a double stroke arrow denotes the evaluation of the mixed second derivative. Note that the Bogner--Fox--Schmit element is defined only for rectangles, whereas the Brenner--Sung element is defined for any regular quadrilateral.}
\label{fig:elements}
\end{figure}

Due to the increased interest in IGA, the connection between $C^1$ isoparametric functions and $G^1$ surfaces was (re)discovered in the IGA context 
by~\cite{KaViJu15,GrPe15,CoSaTa16}. As a consequence, $C^1$ isogeometric spaces over quadrilateral meshes or multi-patch B-spline configurations were studied 
extensively, see also~\cite{KaBuBeJu16,MoViVi16,KaSaTa17,BlMoVi17,KaSaTa17b,KaSaTa18,ChAnRa18,KaSaTa19,BlMoXu20}.

Already before the introduction of IGA, $C^1$ splines over triangles were introduced and also used for numerical analysis. The first approach to obtain $C^1$ discretizations 
for analysis was the Argyris finite element~\cite{ArFrSc68}, see also~\cite{Ci02,BrSc07} and Figure~\ref{fig:elements}. Other constructions for splines over triangulations followed~\cite{WaMe96,Di97,HaBo00}, see also the 
book~\cite{LaSc07}. Recently, also due to IGA, there is more interest in splines over triangulations~\cite{KaByJu11,SpMaPeSa12,Sp13,JaQi14}. 

There are some straightforward connections between splines over triangulations and splines over quadrangulations. Triangular and tensor-product B\'ezier patches are 
two alternative generalizations of B\'ezier curves and are related in the following way. Triangular patches can be interpreted as singular tensor-product patches, 
where one edge is collapsed to a single point~\cite{Hu01,Ta2014}.

In this paper we combine the $C^1$ constructions for triangles and quadrilaterals for degrees $\pd \geq 5$. We extend the idea of Hermite interpolation with Argyris triangle 
elements from~\cite{JaKa17} as well as the framework from~\cite{KaSaTa20} for quad meshes to mixed quad-triangle meshes. The approach is similar to the most general 
setting in~\cite{MoViVi16}. However, we focus on having given a physical mesh instead of a topological one and we provide a construction which is local to the elements.

Such a mixed triangle and quadrilateral mesh can be relevant for many applications. Fluid-structure interaction problems are often solved by combining two different PDE 
formulations, discretized differently, within a single setup~\cite{BaCaHu08}. Combining triangle and quadrilateral patches in a geometrically continuous fashion is also of relevance for the geometric design of surfaces, see e.g.~\cite{Pe95}. Mixed meshes may also arise from trimming~\cite{KiSeYo09,ScMaJu17}. Most CAD software relies 
on trimming procedures to perform Boolean operations. In that case a B-spline patch is modified by modifying its parameter domain, which is usually a box. The parameter 
domain is then given as a part of the full box, where some parts are cut out by so-called trimming curves. These trimming curves divide the B\'ezier (polynomial) elements of the 
spline patch into inner, outer and cut elements, where the outer elements and outer parts of cut elements are discarded. In that case the resulting mesh 
is composed mostly of quadrilaterals, with (in general) triangles, quadrilaterals and pentagons as cut elements near the trimming boundary. For practical purposes 
(e.g. to simplify quadrature) the cut elements are often split into triangles. Thus, this procedure results in a mixed triangle and quadrilateral mesh (see Figure~\ref{fig:trimming_example}). We also want to point out~\cite{SoCo19}, where the authors combine tensor-product volumes 
as an outer layer with tetrahedral B\'ezier elements inside the domain, with an extra layer of pyramidal elements in between.

\begin{figure}[htb]
\centering
\includegraphics[width=9.0cm,clip]{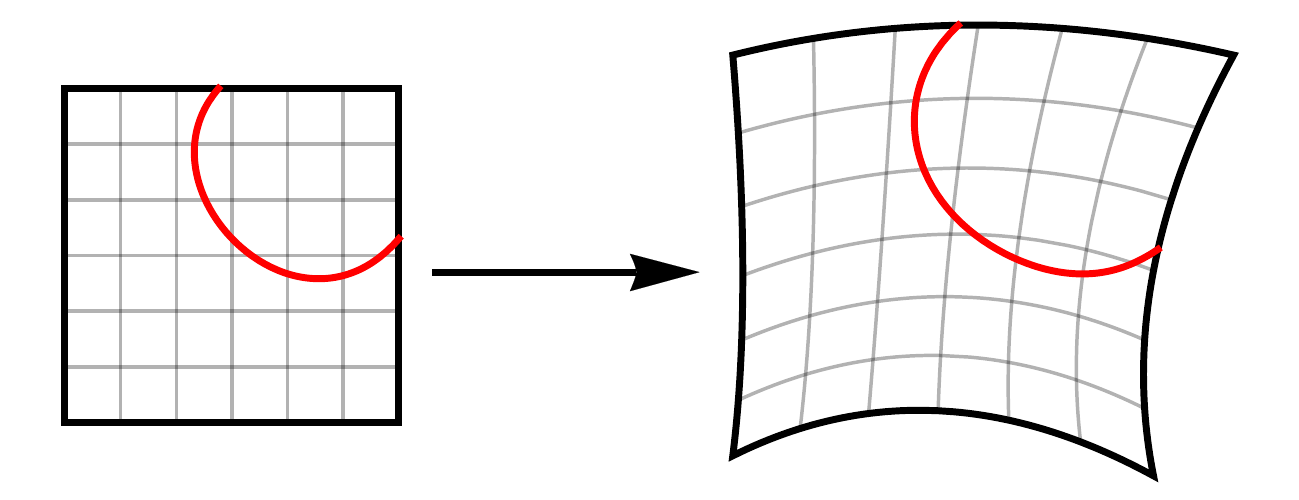} 
\caption{An example of a trimmed patch, where the trimming curve is prescribed in the parameter domain. Note that some of the cut elements are triangles, some are quadrilaterals 
and some are pentagons.}
\label{fig:trimming_example}
\end{figure}

The paper is organized as follows. In Section~\ref{sec:ArgyrisLikeSpace} we first introduce the notation and mesh configuration which will be 
used throughout the paper. After that we define the $C^1$ space over mixed triangle and quadrilateral meshes. Section~\ref{sec:contCond} is devoted to the investigation of continuity conditions 
across interfaces that need to be considered in the construction of splines from such space. This allows us to analyze properties of the space in Section~\ref{sec:SpaceAnalysis} by introducing and studying a projection operator onto the space. The operator is defined via an interpolation problem and serves us to show that the space is of optimal approximation order. The latter is also verified 
numerically in Section~\ref{sec:examples}. We present the conclusions and possible extensions in Section~\ref{sec:Conclusion}.

\section{The Argyris-like space $\A_{\pd}$}
\label{sec:ArgyrisLikeSpace}

The aim of this section is to introduce necessary notation to describe a domain partition consisting of triangles and quadrilaterals. Over such a mixed mesh we then define a spline space, which can be regarded as an extension of the well-known Argyris space. 

\subsection{Mixed triangle and quadrilateral meshes}

We consider an open and connected domain~$\Omega \subset \RR^2$, whose closure~$\overline{\Omega}$ is the disjoint union of triangular or 
quadrilateral elements~$\Omega^{(i)}$, $i \in \indOmega$, edges~$\bE^{(i)}$, $i \in \indE$, and vertices~$\V{i}$, $i \in \indV$, that is
\begin{equation} \label{DomainQmega}
 \overline{\Omega} = \Big( \dot{\bigcup}_{i \in \indOmega} \Omega^{(i)} \Big)  \; \dot{\cup} \Big( \dot{\bigcup}_{i \in \indE} \bE^{(i)} \Big)  \; \dot{\cup} 
 \Big( \dot{\bigcup}_{i \in \indV} \V{i} \Big),
\end{equation}
which implies that no hanging vertices exist. Each vertex~$\V{i}$ is a point in the plane,
\[
 \V{i}\in\mathbb{R}^2, \mbox{ for all }i \in \indV,
\]
and each edge is given by two vertices, i.e.
\[
 \bE^{(i)} = \bE\left(\V{i_1},\V{i_2}\right) = \left\{ (1-v)\V{i_1} + v\V{i_2}: v\in \left(0,1\right) \right\},
\]
for all $i \in \indE$, where $i_1,i_2\in \indV$. Each element $\Omega^{(i)}$, with $i\in \indOmega=\indT \dot{\cup} \indQ$, is either a triangle or a quadrilateral,
where $\indT$ and $\indQ$ are the sets of indices of the triangle and quadrilateral 
elements~$\Omega^{(i)}$, respectively. 
We assume that elements are always open sets and use the notation $\Omega^{(i)} = \T(\V{i_1},\V{i_2},\V{i_3})$ for triangle elements and $\Omega^{(i)} = 
\Q(\V{i_1},\V{i_2},\V{i_3},\V{i_4})$ for quadrilateral elements.
For all $i \in \indT$ we have 
\[
 \overline{\Omega^{(i)}} = \{ (1-u-v)\V{i_1} + u\V{i_2}+v\V{i_3}: (u,v)\in \parDomT\}, 
\]
where $i_1,i_2,i_3\in \indV$ and  
\[
 \parDomT :=\defset{(u,v)\in \RR^2}{u\in \left[0,1\right], \; 0 \leq v \leq 1-u},
\]
whereas for all $i \in \indQ$ we have 
\[
 \overline{\Omega^{(i)}} = \{ (1-u)(1-v)\V{i_1} + u(1-v)\V{i_2}+uv\V{i_3}+(1-u)v\V{i_4}: (u,v)\in \parDomQ\}, 
\]
where $i_1,i_2,i_3,i_4\in \indV$ and $\parDomQ=\left[0,1\right]^2$. We call such a collection of vertices, edges and triangles as well as quadrilateral 
elements a {\it mixed triangle and quadrilateral mesh}, or in short {\it a mixed mesh}.

We denote by $\bfm{F}^{(i)}: \parDomT \to \overline{\Omega^{(i)}}$, with $i \in \indT$, 
and $\bfm{F}^{(i)}: \parDomQ \to \overline{\Omega^{(i)}}$, with $i \in \indQ$, the parametrizations of the elements, which are linear mappings in case 
of triangles and bilinear in case of quadrilaterals. The parametrizations are always assumed to be regular. An example of a mixed mesh, together with the 
mappings~$\bfm{F}^{(i)}$, is shown in Fig.~\ref{fig:mixed_mesh}.

\begin{figure}[htb]
\centering
\includegraphics[width=9.0cm,clip]{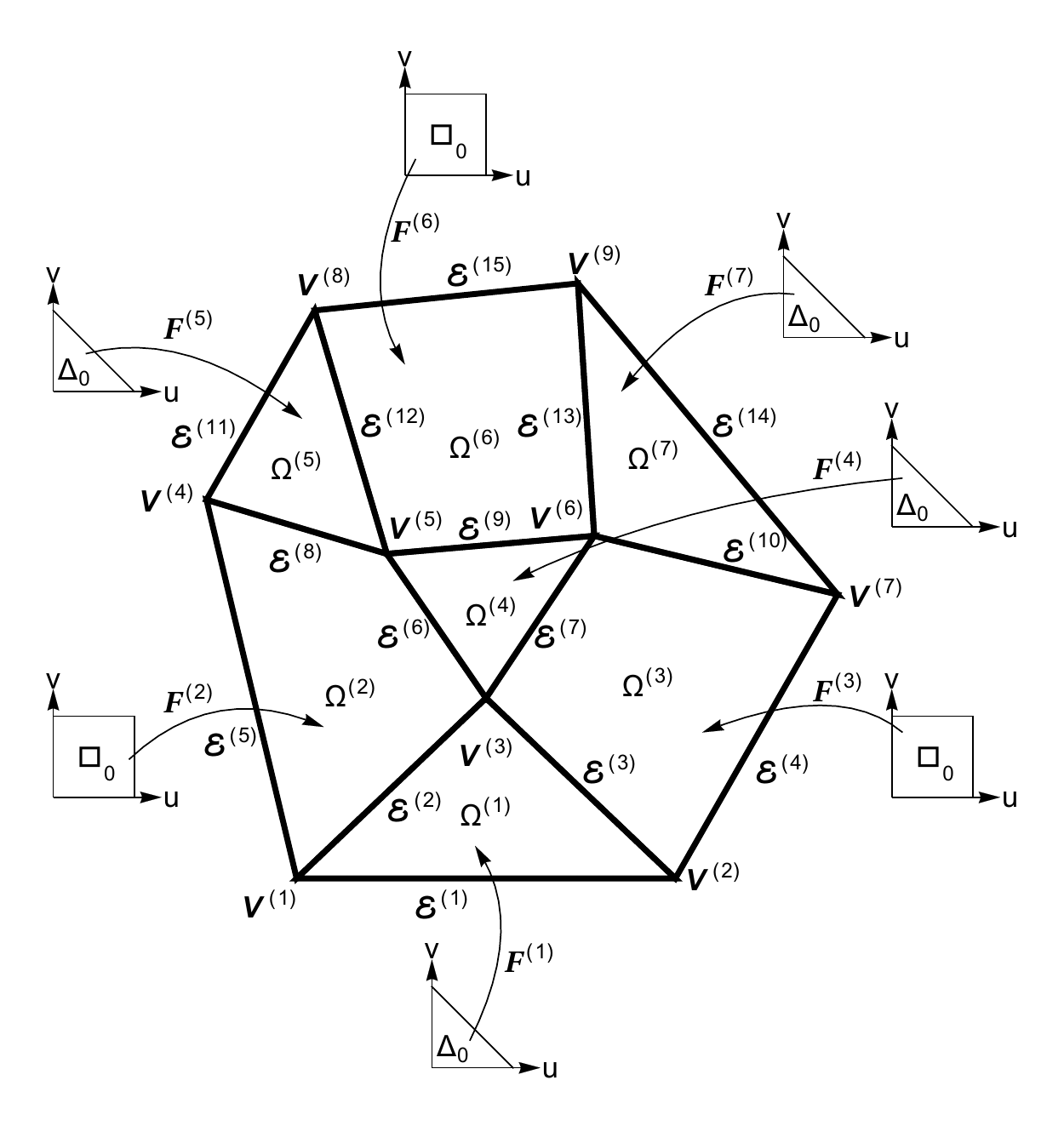} 
\caption{A mixed mesh on a domain~$\Omega$ consisting of triangular and quadrilateral elements~$\Omega^{(i)}$, $i \in \indOmega$, with edges~$\bE^{(i)}$, 
$i \in \indE$, vertices~$\V{i}$, $i \in \indV$, and the associated geometry mappings~$\bfm{F}^{(i)}$, $i \in \indOmega$.}
\label{fig:mixed_mesh}
\end{figure}

\subsection{The Argyris-like space~$\A_{\pd}$ over mixed meshes}

Given an integer~$\pd \geq 1$, let $\poly_\pd$ be the space of univariate polynomials of degree~$\pd$ on the unit interval~$[0,1]$, let $\poly^2_\pd$ denote the space of 
bivariate polynomials of total degree~$\pd$ on the parameter triangle~$\parDomT$ and let $\poly^2_{\pd,\pd}$ be the space of bivariate polynomials of bi-degree~$(\pd,\pd)$ on the unit 
square~$\parDomQ$. We denote by $\bern{\pd}{i}$, $\bernT{\pd}{i,j,k}$ and $\bernB{\pd,\pd}{i,j}$ the corresponding univariate, triangle and tensor-product Bernstein 
bases given by 
\[
\bern{\pd}{i}(u) = \binom{\pd}{i} u^i(1-u)^{\pd-i}, \quad i=0,1,\ldots,\pd,
\]
\[
 \bernT{\pd}{i,j,k}(u,v) = \frac{\pd!}{i! j! k!} u^i v^j(1-u-v)^{k}, \quad i,j,k = 0,1,\ldots,\pd, \; i+j+k=\pd,
\]
and
\[
\bernB{\pd,\pd}{i,j}(u,v) = \bern{\pd}{i}(u)\bern{\pd}{j}(v), \quad i,j=0,1,\ldots,\pd,
\]
respectively.

We are interested in the construction and study of a particular super-smooth $C^1$ spline space of degree $p\geq 5$ on the mixed multi--patch domain~$\Omega$ defined as
\begin{equation} \label{splineSpaceAm}
 \A_{\pd} :=\defset{\varphi \in \C{1}{\left(\overline{\Omega}\right)}}{ 
 \varphi\circ\bfm{F}^{(i)} \in 
\begin{cases}
\poly^2_\pd, & i \in \indT\\
\poly^2_{\pd,\pd}, & i \in \indQ
 \end{cases}; \;
 \varphi \in \C{2}\left(\V{i}\right), \, i \in \indV;  \;   
 \parDer{\bfm{n}_i}{}{\varphi}\on_{\bE^{(i)}} 
 \in 
 \poly_{\pd-1}, \, i \in \indE
 } .
\end{equation}
Here $\parDer{\bfm{n}_i}{}{\varphi}$
denotes the derivative in the direction of the unit vector $\bfm{n}_i$ orthogonal to the edge $\bE^{(i)}$. In what follows, we denote the orthogonal vectors by 
$\orth{(x,y)} := (y,-x)$.  In case of a triangle mesh, that is $\indOmega=\indT$, the space~$\A_{\pd}$ corresponds to the classical Argyris triangle finite element space of 
degree~$\pd$, cf. \cite{ArFrSc68}. Therefore, we refer to the space~$\A_{\pd}$ as the {\em (mixed triangle and quadrilateral) Argyris-like space} of degree~$\pd$. Note that 
the space~$\A_{\pd}$ was also discussed in \cite{KaSaTa19} for the case of a quadrilateral mesh (i.e. $\indOmega=\indQ$) for $\pd=5,6$. Moreover, in~\cite{BrSu05} the 
corresponding quadrilateral element was introduced for $\pd\geq 6$. These constructions were summarized, unified and extended to specific macro-elements for $p=3,4$ in~\cite{KaSaTa20}.

\section{Continuity conditions}
\label{sec:contCond}

We study the $\C{1}$ continuity conditions relating two neighboring elements from a mixed mesh. After presenting some general results, we devote attention to three specific cases of interest, 
i.e., a quadrilateral--triangle, triangle--triangle, and quadrilateral--quadrilateral join.

\subsection{General conditions across the interface}

Without loss of generality let two neighboring elements from the mixed mesh be denoted by $\Omega^{(1)}$ 
and $\Omega^{(2)}$ and parameterized by (bi)linear geometry mappings 
$$\bfm{F}^{(1)}: \mathcal{D}^{(1)}\to \overline{\Omega^{(1)}} \quad {\rm and} \quad 
\bfm{F}^{(2)}:\mathcal{D}^{(2)}\to \overline{\Omega^{(2)}}, 
$$ 
where  $\mathcal{D}^{(1)}$ and $\mathcal{D}^{(2)}$ denote $\parDomQ$ or $\parDomT$.
Suppose that $\overline{\Omega^{(1)}}$ and $\overline{\Omega^{(2)}}$ have a common interface 
$\overline{\bE}$ attained at 
\begin{equation} \label{eq:continuity}
 \bfm{F}^{(1)}(0,v) = \bfm{F}^{(2)}(0,v), \quad v\in [0,1].
\end{equation}
The graph 
$\Phi \subset \left(\overline{\Omega^{(1)}} \cup \overline{\Omega^{(2)}}\right) \times \RR$ 
of any function 
\begin{equation} \label{eq:fun-phi}
\varphi:\overline{\Omega^{(1)}} \cup \overline{\Omega^{(2)}} \to \RR, \quad
\varphi(x,y) = \begin{cases}
\varphi^{(1)}(x,y), & (x,y)\in \overline{\Omega^{(1)}}\\
\varphi^{(2)}(x,y), & (x,y)\in \overline{\Omega^{(2)}} \setminus \overline{\bE}\\
\end{cases},
\end{equation}
is composed of two patches given by parameterizations
\begin{equation*}
\Phi^{(1)} := \begin{bmatrix}
\bfm{F}^{(1)}\\
f^{(1)}
\end{bmatrix}: \mathcal{D}^{(1)} \to \RR^3,\quad 
\Phi^{(2)} := \begin{bmatrix}
\bfm{F}^{(2)}\\
f^{(2)}
\end{bmatrix}: \mathcal{D}^{(2)} \to \RR^3,
\end{equation*}
where $f^{(\ell)} = \varphi^{(\ell)} \circ \bfm{F}^{(\ell)}$, $\ell=1,2$.
Along the common interface the function $\varphi$ is $\C{1}$ continuous if and only if its graph $\Phi$ is $\G{1}$ continuous, i.e.
\begin{align*}
\Phi^{(1)}(0,v) = \Phi^{(2)}(0,v), \quad \det
\left[
\parDer{u}{}{\Phi^{(2)}}(0,v),\,  \parDer{u}{}{\Phi^{(1)}}(0,v),\,  \parDer{v}{}{\Phi^{(1)}}(0,v)
\right]= 0,
\end{align*}
cf.~\cite{KaViJu15,GrPe15,CoSaTa16}. 
Equivalently, it must hold that
\begin{align} 
& f^{(1)}(0,v) = f^{(2)}(0,v), \label{eq:geom-cont-0}\\
& \alpha_1(v) \parDer{u}{}{f^{(2)}}(0,v) - \alpha_2(v) \parDer{u}{}{f^{(1)}}(0,v) + \alpha_3(v) \parDer{v}{}{f^{(1)}}(0,v) = 0,\label{eq:geom-cont}
\end{align}
where
\begin{equation*} \label{def:gluing-alpha}
\alpha_1(v):=  \det \jac{\bfm{F}^{(1)}}(0,v), \quad 
\alpha_2(v):=  \det \jac{\bfm{F}^{(2)}}(0,v), \quad
\alpha_3(v):= \det\left[\parDer{u}{}{\bfm{F}^{(2)}}(0,v), \, \parDer{u}{}{\bfm{F}^{(1)}(0,v)}
\right]
\end{equation*}
are the so called {\it gluing functions} for the interface $\bE$. Note that $\alpha_i(v) \not= 0$ for $v\in [0,1]$, $i=1,2$.
The next two lemmas will further be needed. 
Note that the first one is for the cubic case similar to the construction in \cite{Pe90}.
\begin{lemma} \label{lemma-connection}
For any two  bijective and regular $\C{1}$ geometry mappings $\bfm{F}^{(1)}$ and $\bfm{F}^{(2)}$, such that 
equation~\eqref{eq:continuity} is satisfied,
it holds that 
\begin{equation} \label{eq:lemma1}
\begin{split}
& \det \jac{\bfm{F}^{(2)}}(0,v) \, \orth{\left(\parDer{u}{}{\bfm{F}^{(1)}} (0,v)\right)}
 - \det \jac{\bfm{F}^{(1)}}(0,v) \, \orth{\left(\parDer{u}{}{\bfm{F}^{(2)}} (0,v)\right)}
=\\
&  \det\left[\parDer{u}{}{\bfm{F}^{(2)}}(0,v), \, \parDer{u}{}{\bfm{F}^{(1)}(0,v)} \right]
\, \orth{\left(\parDer{v}{}{\bfm{F}^{(1)}} (0,v)\right)}.
\end{split}
\end{equation}
\end{lemma}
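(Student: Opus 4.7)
The plan is to reduce identity~\eqref{eq:lemma1} to a universal linear-dependence relation among three vectors in the plane. First, I would differentiate the interface matching condition $\bfm{F}^{(1)}(0,v)=\bfm{F}^{(2)}(0,v)$ with respect to $v$ to obtain $\parDer{v}{}{\bfm{F}^{(1)}}(0,v)=\parDer{v}{}{\bfm{F}^{(2)}}(0,v)$. Fixing $v$ and introducing the shorthand
\[
a:=\parDer{u}{}{\bfm{F}^{(1)}}(0,v),\qquad b:=\parDer{u}{}{\bfm{F}^{(2)}}(0,v),\qquad c:=\parDer{v}{}{\bfm{F}^{(1)}}(0,v)=\parDer{v}{}{\bfm{F}^{(2)}}(0,v),
\]
the two Jacobian determinants become $\det\jac{\bfm{F}^{(1)}}(0,v)=\det[a,c]$ and $\det\jac{\bfm{F}^{(2)}}(0,v)=\det[b,c]$, while the coefficient on the right-hand side of \eqref{eq:lemma1} is $\det[b,a]=-\det[a,b]$. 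With these substitutions, \eqref{eq:lemma1} is equivalent to
\[
\det[b,c]\,\orth{a}\;-\;\det[a,c]\,\orth{b}\;+\;\det[a,b]\,\orth{c}\;=\;0.
\]

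The second step is to recognize this as the image, under the linear map $\orth{}$, of the fundamental two-dimensional identity
\[
\det[b,c]\,a\;-\;\det[a,c]\,b\;+\;\det[a,b]\,c\;=\;0,
\]
valid for every triple $a,b,c\in\RR^2$. This identity follows at once from the fact that any three vectors in $\RR^2$ are linearly dependent; a one-line component-wise expansion confirms it without any case distinction. Since $\orth{}$ is linear, applying it term-by-term to the three summands produces the displayed identity and hence \eqref{eq:lemma1}.

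The only real obstacle is careful bookkeeping of signs: the convention $\orth{(x,y)}=(y,-x)$, the anti-symmetry of $\det[\cdot,\cdot]$ in its two columns, and the order in which $\bfm{F}^{(1)}$ and $\bfm{F}^{(2)}$ appear in each term must line up correctly with the formulation of \eqref{eq:lemma1}. Beyond that, the argument uses nothing more than $C^1$ differentiability of the two mappings and the interface matching condition; the additional hypotheses of bijectivity and regularity, while natural in the geometric setting, are not invoked in the proof of the identity itself.
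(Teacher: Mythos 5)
Your proof is correct and takes essentially the same route as the paper's: both reduce \eqref{eq:lemma1} to the planar Cramer-type identity $\det\left(\bfm{b},\bfm{c}\right)\bfm{a}+\det\left(\bfm{c},\bfm{a}\right)\bfm{b}+\det\left(\bfm{a},\bfm{b}\right)\bfm{c}=\bfm{0}$ combined with $\parDer{v}{}{\bfm{F}^{(1)}}(0,v)=\parDer{v}{}{\bfm{F}^{(2)}}(0,v)$ from \eqref{eq:continuity}. The only cosmetic difference is that you apply the linear map $\orth{(\cdot)}$ to the identity after invoking it for $a,b,c$, whereas the paper invokes the identity directly for the perped vectors and converts the determinants back via $\det\left(\bfm{a},\bfm{b}\right)=\det\left(\orth{\bfm{a}},\orth{\bfm{b}}\right)$.
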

\begin{proof}
The result follows from the equalities
$$
\det \left(\bfm{b}, \bfm{c}\right) \bfm{a} +  \det \left(\bfm{c}, \bfm{a}\right) \bfm{b} + \det \left(\bfm{a}, \bfm{b}\right) \bfm{c} = \bfm{0}, \quad \det \left(\bfm{a}, \bfm{b}\right) = \det \left(\orth{\bfm{a}}, \orth{\bfm{b}}\right), 
\quad \det \left(\bfm{a}, \bfm{b}\right) = -\det \left(\bfm{b}, \bfm{a}\right),
$$
which are true for any three planar vectors $\bfm{a}$, $\bfm{b}$ and $\bfm{c}$. Namely, 
\eqref{eq:lemma1} is obtained from the first equality applied to  
$\bfm{a}=\orth{\left(\parDer{u}{}{\bfm{F}^{(1)}} (0,v)\right)}$, $\bfm{b}=\orth{\left(\parDer{v}{}{\bfm{F}^{(1)}} (0,v)\right)}$ and $\bfm{c}=\orth{\left(\parDer{u}{}{\bfm{F}^{(2)}} (0,v)\right)}$
using also that $\parDer{v}{}\bfm{F}^{(1)}(0,v) = \parDer{v}{}\bfm{F}^{(2)}(0,v)$, which is true by assumption~\eqref{eq:continuity}. 
 \end{proof}
\begin{lemma} \label{lemma-formulaForDirDer}
Suppose that $\bfm{F}^{(\ell)}$ is a bijective and regular $\C{1}$ geometry mapping,  
$\varphi^{(\ell)}$ is a $\C{1}$ continuous function and $f^{(\ell)} = \varphi^{(\ell)} \circ \bfm{F}^{(\ell)}$. Further, let 
$\bfm{n}$ be any chosen unit vector.
The directional derivative 
$\parDer{\bfm{n}}{}{\varphi^{(\ell)}}(x,y)$ at a point $(x,y) = \bfm{F}^{(\ell)}(u,v)$ is in local coordinates equal to
\begin{equation} \label{directional-der}
{\w}_{\bfm{n}}^{(\ell)}(u,v)   := \sprod{\bfm{n}}{ \bfm{G}^{(\ell)}(u,v)},
\end{equation}
where
\begin{equation} \label{directional-der2}
 \bfm{G}^{(\ell)}(u,v)  := \frac{1}{\det \jac{\bfm{F}^{(\ell)}}(u,v)} \left(
\parDer{u}{}{f^{(\ell)}(u,v)} \orth{\left(\parDer{v}{}{\bfm{F}^{(\ell)}}(u,v) \right)}
- \parDer{v}{}{f^{(\ell)}(u,v)} \orth{\left(\parDer{u}{}{\bfm{F}^{(\ell)}}(u,v) \right)}
\right).
\end{equation}
\end{lemma}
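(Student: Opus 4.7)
The plan is to recognize that $\bfm{G}^{(\ell)}(u,v)$ is nothing other than the Euclidean gradient $\nabla \varphi^{(\ell)}(x,y)$ evaluated at $(x,y)=\bfm{F}^{(\ell)}(u,v)$, written in terms of local coordinates. Once this identification is in place, equation~\eqref{directional-der} follows immediately from the elementary identity $\parDer{\bfm{n}}{}{\varphi^{(\ell)}}(x,y) = \sprod{\bfm{n}}{\nabla\varphi^{(\ell)}(x,y)}$, valid since $\bfm{n}$ is a unit vector.

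To establish that $\bfm{G}^{(\ell)}(u,v) = \nabla\varphi^{(\ell)}(\bfm{F}^{(\ell)}(u,v))$, I would differentiate $f^{(\ell)} = \varphi^{(\ell)} \circ \bfm{F}^{(\ell)}$ via the chain rule, which gives the $2\times 2$ linear system
\begin{equation*}
\parDer{u}{}{f^{(\ell)}} = \sprod{\nabla\varphi^{(\ell)}}{\parDer{u}{}{\bfm{F}^{(\ell)}}}, \qquad \parDer{v}{}{f^{(\ell)}} = \sprod{\nabla\varphi^{(\ell)}}{\parDer{v}{}{\bfm{F}^{(\ell)}}}
\end{equation*}
for the two unknown components of $\nabla\varphi^{(\ell)}$. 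The regularity hypothesis on $\bfm{F}^{(\ell)}$ ensures $\det\jac{\bfm{F}^{(\ell)}}\neq 0$, so this system is uniquely solvable.

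Solving the system by Cramer's rule, or equivalently by multiplying $\nabla\varphi^{(\ell)}$ by the inverse of $\jac{\bfm{F}^{(\ell)}}$ and repackaging the adjugate through the rotation $\orth{(x,y)}=(y,-x)$, yields after a short calculation exactly the formula~\eqref{directional-der2}. The point is that the columns of the adjugate of $\jac{\bfm{F}^{(\ell)}}$ are, up to sign, precisely $\orth{(\parDer{v}{}{\bfm{F}^{(\ell)}})}$ and $\orth{(\parDer{u}{}{\bfm{F}^{(\ell)}})}$, and they are weighted by $\parDer{u}{}{f^{(\ell)}}$ and $-\parDer{v}{}{f^{(\ell)}}$ respectively, matching the right-hand side of~\eqref{directional-der2} term by term.

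There is no real obstacle here: the statement is essentially the classical formula for a Euclidean gradient pulled back to curvilinear coordinates. The only point demanding some care is sign bookkeeping when the cofactor matrix of $\jac{\bfm{F}^{(\ell)}}$ is rewritten via the rotation $\orth{\cdot}$, and verifying that the roles of the $u$- and $v$-derivatives of $\bfm{F}^{(\ell)}$ end up swapped in the correct way.
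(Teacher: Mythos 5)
Your proposal is correct and follows essentially the same route as the paper: differentiate $f^{(\ell)} = \varphi^{(\ell)} \circ \bfm{F}^{(\ell)}$ by the chain rule, invert the Jacobian (equivalently, Cramer's rule with the adjugate written via $\orth{\cdot}$) to identify $\bfm{G}^{(\ell)}$ with the gradient of $\varphi^{(\ell)}$ at $\bfm{F}^{(\ell)}(u,v)$, and then take the scalar product with $\bfm{n}$.
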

\begin{proof}
By differentiating the function $f^{(\ell)} = \varphi^{(\ell)} \circ \bfm{F}^{(\ell)}$ we obtain
$$
\left(\parDer{u}{}{f^{(\ell)}(u,v)},
\parDer{v}{}{f^{(\ell)}(u,v)}
\right) = 
\left(\parDer{x}{}{\varphi^{(\ell)}\left(\bfm{F}^{(\ell)}(u,v)\right)},
\parDer{y}{}{\varphi^{(\ell)}\left(\bfm{F}^{(\ell)}(u,v)\right)}
\right)
\jac{\bfm{F}^{(\ell)}}(u,v).
$$
Multiplying this equality by the inverse of the Jacobian matrix $\jac{\bfm{F}^{(\ell)}}(u,v)$ we
get that
$$ \bfm{G}^{(\ell)}(u,v)  = 
\left(\parDer{x}{}{\varphi^{(\ell)}\left(\bfm{F}^{(\ell)}(u,v)\right)},
\parDer{y}{}{\varphi^{(\ell)}\left(\bfm{F}^{(\ell)}(u,v)\right)}
\right).
$$
Applying the scalar product $\sprod{\bfm{n}}{ \, \cdot\,}$ on this equation gives
$${\w}_{\bfm{n}}^{(\ell)}(u,v)  = \sprod{\bfm{n}}{
\left(\parDer{x}{}{\varphi^{(\ell)}\left(\bfm{F}^{(\ell)}(u,v)\right)},
\parDer{y}{}{\varphi^{(\ell)}\left(\bfm{F}^{(\ell)}(u,v)\right)}
\right)
}
= \parDer{\bfm{n}}{}{\varphi^{(\ell)}}(x,y),
$$
where  $(x,y) = \bfm{F}^{(\ell)}(u,v)$ which concludes the proof.
\end{proof}
Let us now choose a unit vector $\bfm{n}$ orthogonal to the common edge $\bE$.
Multiplying \eqref{eq:lemma1}
by $\sprod{\bfm{n}}{\cdot}$ we get that
\begin{equation}\label{alpha3}
\alpha_3(v) = \alpha_2(v) \beta_1(v) - \alpha_1(v) \beta_2(v),
\end{equation}
where
\begin{equation} \label{def:betas}
\beta_\ell(v) = \frac{1}{\beta(v)} \sprod{\bfm{n}}{\orth{\left(\parDer{u}{}{\bfm{F}^{(\ell)}}(0,v) \right)}}, \quad
{\beta}(v) := \sprod{\bfm{n}}{\orth{\left(\parDer{v}{}{\bfm{F}^{(1)}}(0,v) \right)}} =
\sprod{\bfm{n}}{\orth{\left(\parDer{v}{}{\bfm{F}^{(2)}}(0,v) \right)}}.
\end{equation}
The assumption that  parameterizations $\bfm{F}^{(1)}$ and $\bfm{F}^{(2)}$ are in $\poly^2_{1}$ or in $\poly^2_{1,1}$
 implies that $\beta \in \poly_{0}$ is a nonzero constant, the degree of $\alpha_1, \alpha_2, \beta_1, \beta_2$ is less or equal to $1$, and the degree of $\alpha_3$ is 
 less or equal to $2$. 
By \eqref{alpha3} 
the condition \eqref{eq:geom-cont} rewrites to 
\begin{equation} \label{eq:geom-cont-1}
\alpha_1(v) \left(\parDer{u}{}{f^{(2)}}(0,v) - \beta_2(v)  \parDer{v}{}{f^{(1)}}(0,v)\right)=
\alpha_2(v) \left(\parDer{u}{}{f^{(1)}}(0,v) -\beta_1(v) \parDer{v}{}{f^{(1)}}(0,v)\right).
\end{equation}
On the other hand, using Lemma~\ref{lemma-formulaForDirDer} for $u=0$,
the definitions in \eqref{def:betas} and the identity $\det \jac{\bfm{F}^{(\ell)}}(u,v)= \alpha_\ell(v)$,
one can see that the directional derivative 
$\parDer{\bfm{n}}{}{\varphi^{(\ell)}}$ at points on the boundary is in local 
coordinates equal to
\begin{equation}\label{def:wi}
\begin{split}
{\w}_{\bfm{n}}^{(\ell)}(0,v) = \frac{\beta}{\alpha_\ell(v)} \left(
\parDer{u}{}{f^{(\ell)}(0,v)}  - {\beta}_\ell(v)
\parDer{v}{}{f^{(\ell)}(0,v)} 
\right), \quad \ell=1,2.
\end{split}
\end{equation}
So, from \eqref{eq:geom-cont-1} and \eqref{def:wi} it follows that the $\G{1}$ continuity condition \eqref{eq:geom-cont-1} is simply equal to 
\begin{equation*} \label{eq:geom-cont-2}
{\w}_{\bfm{n}}^{(1)}(0,v) = {\w}_{\bfm{n}}^{(2)}(0,v), \quad v\in [0,1],
\end{equation*}
cf.~\cite{Pe02}. The additional assumption $\parDer{\bfm{n}}{}{\varphi^{(\ell)}}\on_{\bE} \in  \poly_{\pd-1}$ implies that
\begin{equation} \label{eq:geom-cont-3}
{\w}_{\bfm{n}}^{(1)}(0,v) = {\w}_{\bfm{n}}^{(2)}(0,v) = \beta\sum_{j=0}^{\pd-1} {d}_{j} \bern{\pd-1}{j}(v), \quad v\in [0,1],
\end{equation}
for some coefficients $d_j$, $j=0,1,\dots,\pd-1$.

In the following subsections we analyze the continuity conditions across the common edge in more detail for three different types of element pairs: quadrilateral--triangle, 
triangle--triangle and  quadrilateral--quadrilateral. According to the simplified notation introduced in this section, we denote the common edge of $\Omega^{(1)}$ and $\Omega^{(2)}$ by $\bE= \bE(\bfm{V}^{(1)}, \bfm{V}^{(2)})$, and we choose $\bfm{n}$ as the vector orthogonal to $\bE$, i.e., 
$$\bfm{n} = \orth{\left(\V{2}-\V{1} \right)}/\norm{\V{2}-\V{1}}.$$

\subsection{Quadrilateral--triangle}  \label{subSec:quad-tria}

Suppose that ${\Omega}^{(1)}$ is a quadrilateral and ${\Omega}^{(2)}$ a triangle,
\begin{equation}\label{domain-case1}
{\Omega}^{(1)} = \Q\left(\V{1}, \V{2}, \V{3}, \V{4}\right), \quad 
{\Omega}^{(2)} = \T\left(\V{5}, \V{2}, \V{1}\right),
\end{equation}
and let the geometry mappings be equal to
\begin{equation} \label{geomMapping-case1}
\begin{split}
& \bfm{F}^{(1)}(u,v) = (1-u)(1-v) \V{1} +  (1-u) v \V{2} + u v \V{3} +  u(1-v) \V{4},\\
& \bfm{F}^{(2)}(u,v) =  u  \V{5}  +  v \V{2}  +  (1-u-v) \V{1}.
\end{split}
\end{equation}
This configuration is visualized in Figure~\ref{fig:tri_quad}.
\begin{figure}[htb]
 \centering
 \begin{picture}(120,105)
  \put(0,10){\scalebox{-1}[1]{\includegraphics[width=.25\textwidth,clip]{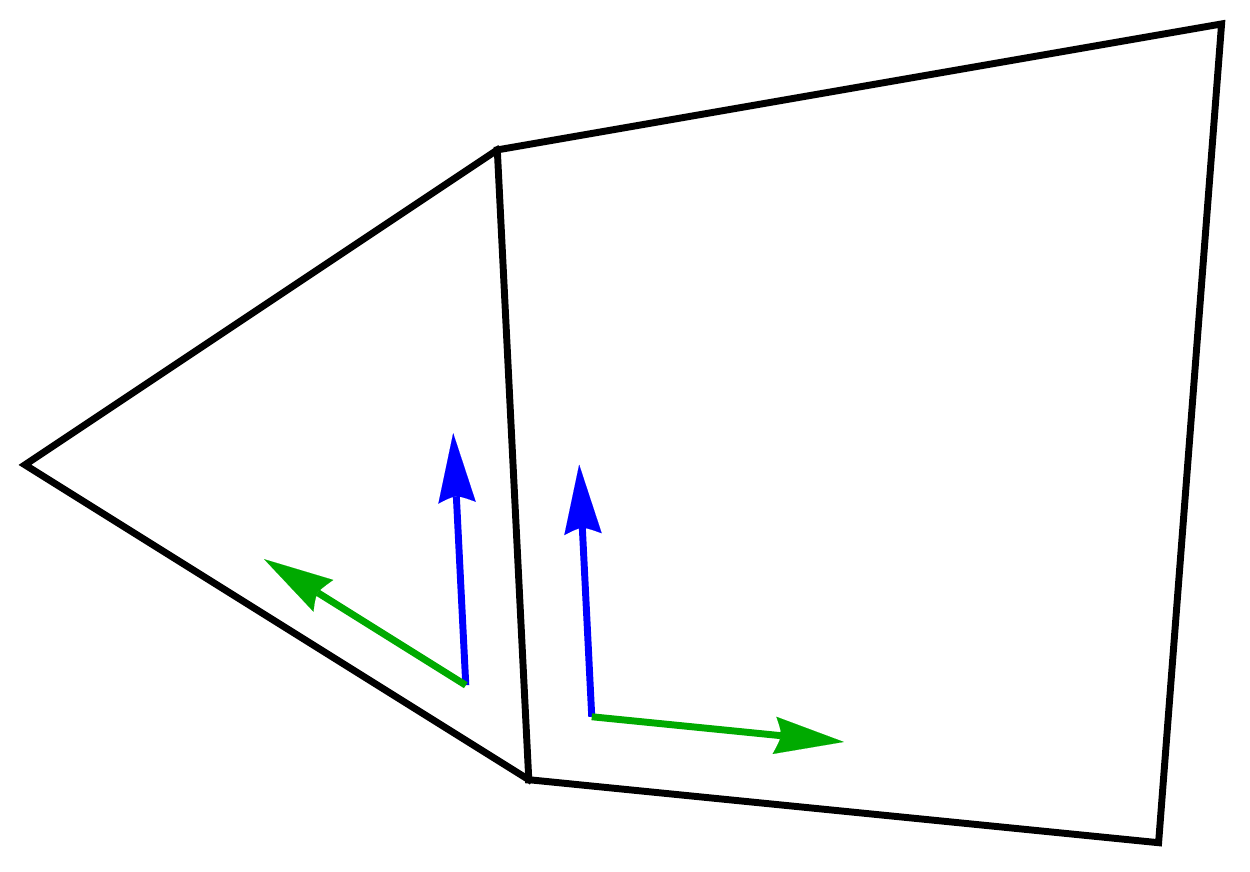}}}
  \put(60,5){$\V{1}$}
  \put(65,85){$\V{2}$}
  \put(0,95){$\V{3}$}
  \put(0,0){$\V{4}$}
  \put(120,45){$\V{5}$}
  \put(28,45){$\Omega^{(1)}$}
  \put(82,45){$\Omega^{(2)}$}
 \end{picture}
\caption{A quadrilateral--triangle pair with parameter directions for $u$ (green) and $v$ (blue).}
\label{fig:tri_quad}
\end{figure}

\begin{remark}
Note that the parameters of the geometry mapping $\bfm{F}^{(2)}$ define the triple $(u,v,1-u-v)$ which represent the barycentric coordinates
of the point $\bfm{F}^{(2)}(u,v)$ with respect to the triangle $\T\left(\V{5}, \V{2}, \V{1}\right)$.
\end{remark}
The gluing functions $\alpha_1$, $\alpha_3$ are in this case linear polynomials
$\alpha_i(v) = (1-v)\,\alpha_{i,0} + v \,\alpha_{i,1}$,  $ i=1,3$,  
with coefficients
\begin{align*}
& \alpha_{1,0} = \det\left[\V{4}-\V{1},\V{2}-\V{1}\right], \quad  
\alpha_{1,1} = \det\left[\V{3}-\V{2},\V{2}-\V{1}\right], \\
& \alpha_{3,0} = \det\left[\V{5}-\V{1},\V{4}-\V{1}\right], \quad
\alpha_{3,1} = \det\left[\V{5}-\V{1},\V{3}-\V{2}\right],
\end{align*}
while $\alpha_2$ reduces to a nonzero constant, $\alpha_2 = \det\left[\V{5}-\V{1},\V{2}-\V{1}\right]$. 
Moreover, from \eqref{def:wi} we see that $\beta, \beta_2 \in \poly_0$,  $\beta_1 \in \poly_{1}$, 
\begin{align*}
& \beta = \norm{\V{2}-\V{1}}, \quad 
\beta_{2} = \frac{\sprod{\V{2}-\V{1}}{\V{5}-\V{1}}}{\norm{\V{2}-\V{1}}^2},\\
& \beta_1(v)=(1-v)\,\beta_{1,0} + v \,\beta_{1,1}, \quad
\beta_{1,0} = \frac{\sprod{\V{2}-\V{1}}{\V{4}-\V{1}}}{\norm{\V{2}-\V{1}}^2}, \quad 
\beta_{1,1} = \frac{\sprod{\V{2}-\V{1}}{\V{3}-\V{2}}}{\norm{\V{2}-\V{1}}^2}.
\end{align*}
Let $f^{(1)}$ be a bivariate polynomial of bi-degree $(\pd,\pd)$ and let $f^{(2)}$ be a bivariate polynomial  of 
total degree~$\pd$, expressed in a Bernstein basis as
\begin{equation}\label{pol-f1-case1}
f^{(1)}(u,v) = \sum_{i,j=0}^\pd {b}^{(1)}_{i,j} \bern{\pd}{i}(u) \bern{\pd}{j}(v), \quad
f^{(2)}(u,v) = \sum_{i+j+k = \pd} {b}^{(2)}_{i,j,k} \bernT{\pd}{i,j,k}(u,v).
\end{equation}
It is straightforward to compute 
\begin{align*}
& f^{(1)}(0,v) = \sum_{j=0}^\pd {b}^{(1)}_{0,j} \bern{\pd}{j}(v), \quad
f^{(2)}(0,v) = \sum_{j=0}^\pd {b}^{(2)}_{0,j,\pd-j} \bern{\pd}{j}(v), \\
& \parDer{u}{}{f^{(1)}}(0,v) = \pd \sum_{j=0}^{\pd} \left({b}^{(1)}_{1,j} - {b}^{(1)}_{0,j} \right) \bern{\pd}{j}(v),
\quad
 \parDer{v}{}{f^{(1)}}(0,v) = \pd \sum_{j=0}^{\pd-1} \left({b}^{(1)}_{0,j+1} - {b}^{(1)}_{0,j} \right) \bern{\pd-1}{j}(v),\\
 & \parDer{u}{}{f^{(2)}}(0,v) = \pd \sum_{j=0}^{\pd-1} \left({b}^{(2)}_{1,j,\pd-1-j} - {b}^{(2)}_{0,j,\pd-j} \right) \bern{\pd-1}{j}(v),  \;\,
 \parDer{v}{}{f^{(2)}}(0,v) = \pd \sum_{j=0}^{\pd-1} \left({b}^{(2)}_{0,j+1,\pd-1-j} - {b}^{(2)}_{0,j,\pd-j} \right) \bern{\pd-1}{j}(v).
\end{align*}
Thus the condition \eqref{eq:geom-cont-0} is fulfilled iff
\begin{equation} \label{eq:G0cond-case1}
{b}^{(1)}_{0,j} = {b}^{(2)}_{0,j,\pd-j} = {c}_j, \quad j=0,1,\dots, \pd,
\end{equation}
for any chosen  values ${c}_j$.
Since in this case $\alpha_2$ and $\beta_2$ are constants, we can see that 
${w}_{\bfm{n}}^{(2)}(0,\cdot) \in \poly_{\pd-1}$, so the condition 
  $\parDer{\bfm{n}}{}{\varphi^{(i)}}\on_{\bE} \in  \poly_{\pd-1}$ is automatically fulfilled. 
Relations  \eqref{def:wi} and \eqref{eq:geom-cont-3} imply 
\begin{subequations}\label{case1-eq-1}
\begin{equation} \label{case1-eq-1a}
\parDer{u}{}{f^{(1)}}(0,v) -\beta_1(v) \parDer{v}{}{f^{(1)}}(0,v) = \alpha_1(v) \sum_{j=0}^{\pd-1} d_j 
\bern{\pd-1}{j}(v)
\end{equation}
and
\begin{equation} \label{case1-eq-1b}
\parDer{u}{}{f^{(2)}}(0,v) - \beta_2 \, \parDer{v}{}{f^{(2)}}(0,v) = \alpha_2 \sum_{j=0}^{\pd-1} d_j 
\bern{\pd-1}{j}(v),
\end{equation}
\end{subequations}
which together with \eqref{eq:G0cond-case1}, \eqref{case1-eq-1}  and
\begin{align*}
& \alpha_1(v) \sum_{j=0}^{\pd-1} d_j \bern{\pd-1}{j}(v) = \sum_{j=0}^\pd 
\frac{1}{\pd} \left(
(\pd-j) \, \alpha_{1,0} \,d_j + j \, \alpha_{1,1}\, d_{j-1}
\right) \bern{\pd}{j}(v),
\\
&
\beta_1(v) \parDer{v}{}{f^{(1)}}(0,v)  = 
\sum_{j=0}^\pd \left( (\pd-j) \, \beta_{1,0} \left(
{b}^{(1)}_{0,j+1} - {b}^{(1)}_{0,j}\right) + 
j\, \beta_{1,1} \left(
{b}^{(1)}_{0,j} - {b}^{(1)}_{0,j-1}\right)
\right)\bern{\pd}{j}(v),
\end{align*}
determines the B\'ezier ordinates 
\begin{equation} \label{eq:G1cond-case1-Q}
{b}^{(1)}_{1,j}  = c_{j} + \frac{1}{\pd}\left(
\frac{1}{\pd} \left(
(\pd-j) \, \alpha_{1,0} \,d_j + j \, \alpha_{1,1}\, d_{j-1}
\right)
+  (\pd-j) \, \beta_{1,0} \left(
c_{j+1} - c_{j}\right) + 
j\, \beta_{1,1} \left(
c_{j} - c_{j-1}\right) 
\right) 
\end{equation}
for $ j=0,1,\dots,\pd$, and
\begin{equation} \label{eq:G1cond-case1-T}
 {b}^{(2)}_{1,j,\pd-1-j} = {c}_j + 
 \beta_2 \left(c_{j+1}-c_j\right) +  \frac{1}{\pd} \alpha_2\, d_j, \quad j=0,1,\dots,\pd-1, 
\end{equation}
where ${c}_{-1} :=  0$, $ {c}_{\pd+1} :=  0$.
We can summarize the obtained results in the following proposition.
\begin{proposition} \label{proposition-case1}
Assume that the two neighboring patches, corresponding geometry mappings and functions $f^{(1)}$, $f^{(2)}$ are given by \eqref{domain-case1}, \eqref{geomMapping-case1} and 
\eqref{pol-f1-case1}. Then the isoparametric function \eqref{eq:fun-phi} is $\C{1}$ continuous across the common interface iff the control ordinates satisfy  \eqref{eq:G0cond-case1}, \eqref{eq:G1cond-case1-Q} and \eqref{eq:G1cond-case1-T} for any chosen $2p+1$ coefficients
$\left(c_i\right)_{i=0}^\pd$ and $\left(d_i\right)_{i=0}^{\pd-1}$.
\end{proposition}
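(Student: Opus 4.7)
The approach is to specialize the general $C^1$ interface conditions from Section~\ref{sec:contCond} to the quadrilateral--triangle pair and then translate them into relations among Bernstein ordinates by exploiting the linear independence of the univariate Bernstein basis. First I would invoke the general equivalence that $\varphi$ is $\C{1}$ across $\bE$ iff both $f^{(1)}(0,v)=f^{(2)}(0,v)$ and the normal-derivative matching ${\w}_{\bfm{n}}^{(1)}(0,\cdot)={\w}_{\bfm{n}}^{(2)}(0,\cdot)$ hold, subject to the additional restriction $\parDer{\bfm{n}}{}{\varphi^{(\ell)}}\on_{\bE}\in \poly_{\pd-1}$. The key simplification in the present configuration is that $\bfm{F}^{(2)}$ is linear, so $\alpha_2$ and $\beta_2$ are constants; hence the trace of ${\w}_{\bfm{n}}^{(2)}(0,\cdot)$ automatically lies in $\poly_{\pd-1}$, and the polynomial-degree restriction needs to be imposed only on the quadrilateral side. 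Writing the common normal-derivative trace in Bernstein form as in \eqref{eq:geom-cont-3} introduces the $\pd$ coefficients $(d_j)_{j=0}^{\pd-1}$ and reduces the $\G{1}$ condition to the two scalar equations \eqref{case1-eq-1a}--\eqref{case1-eq-1b}.

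Second, I would substitute the expansions \eqref{pol-f1-case1} and the explicit boundary traces and partial derivatives already computed above the proposition into the $\C{0}$ equation $f^{(1)}(0,v)=f^{(2)}(0,v)$. By linear independence of $\{\bern{\pd}{j}\}_{j=0}^{\pd}$, comparison of coefficients yields $b^{(1)}_{0,j}=b^{(2)}_{0,j,\pd-j}=:c_j$, which is exactly \eqref{eq:G0cond-case1} and exhibits the $\pd+1$ free parameters $(c_j)_{j=0}^{\pd}$.

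Third, I would expand the two $\G{1}$ identities in a common Bernstein basis. On the quadrilateral side, both $\alpha_1(v)\sum_j d_j \bern{\pd-1}{j}(v)$ and $\beta_1(v)\,\partial_v f^{(1)}(0,v)$ are products of a degree-$1$ polynomial with a degree-$(\pd-1)$ polynomial, and the required degree-elevated representations in $\{\bern{\pd}{j}\}_{j=0}^{\pd}$ are already supplied in the displays preceding the proposition. Solving coefficient-wise for $b^{(1)}_{1,j}$ delivers \eqref{eq:G1cond-case1-Q}, with the convention $c_{-1}=c_{\pd+1}=0$ absorbing the boundary indices $j=0$ and $j=\pd$. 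On the triangle side, since $\alpha_2$ and $\beta_2$ are constants, \eqref{case1-eq-1b} already lives in $\poly_{\pd-1}$; matching coefficients of $\bern{\pd-1}{j}$ immediately gives \eqref{eq:G1cond-case1-T}.

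The converse direction is then read off the same computation: for any prescribed $(c_j)_{j=0}^{\pd}$ and $(d_j)_{j=0}^{\pd-1}$, the formulas \eqref{eq:G0cond-case1}, \eqref{eq:G1cond-case1-Q} and \eqref{eq:G1cond-case1-T} fix the two boundary layers of Bernstein ordinates so that the traces $f^{(\ell)}(0,\cdot)$ agree and \eqref{eq:geom-cont-3} holds by construction, whence $\C{1}$ continuity follows. The only real obstacle is the bookkeeping in the degree-elevation step on the quadrilateral side: one must verify that the $j=0$ and $j=\pd$ coefficients produced by expanding $\alpha_1\sum d_j \bern{\pd-1}{j}$ and $\beta_1\,\partial_v f^{(1)}(0,\cdot)$ in the degree-$\pd$ basis are consistent with the vanishing convention for $c_{-1}$ and $c_{\pd+1}$; once this is arranged, the three identities follow by uniqueness of Bernstein expansions.
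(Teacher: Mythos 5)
Your proposal is correct and follows essentially the same route as the paper, which derives Proposition~\ref{proposition-case1} in the text immediately preceding it: specialize the general $\G{1}$ conditions to this configuration, observe that constant $\alpha_2,\beta_2$ force the normal-derivative trace into $\poly_{\pd-1}$, and match Bernstein coefficients after degree elevation on the quadrilateral side. The only slight imprecision is your remark that the degree restriction ``needs to be imposed'' on the quadrilateral side: since ${\w}_{\bfm{n}}^{(1)}(0,\cdot)={\w}_{\bfm{n}}^{(2)}(0,\cdot)$ and the latter automatically lies in $\poly_{\pd-1}$, the restriction is automatic on both sides, which is precisely why this proposition states a plain $\C{1}$ equivalence without the extra degree hypothesis that must be added in Proposition~\ref{proposition-case3}.
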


\subsection{Triangle--triangle}  \label{subSec:tria-tria}

Suppose that ${\Omega}^{(1)}$ and ${\Omega}^{(2)}$ are both triangles,
\begin{equation}\label{domain-case2}
{\Omega}^{(1)} = \T\left(\V{3}, \V{2}, \V{1}\right), \quad 
{\Omega}^{(2)} = \T\left(\V{5}, \V{2}, \V{1}\right),
\end{equation}
and the geometry mappings equal 
\begin{equation} \label{geomMapping-case2}
\begin{split}
& \bfm{F}^{(1)}(u,v) = u  \V{3}  +  v \V{2}  +  (1-u-v) \V{1},  \quad \bfm{F}^{(2)}(u,v) =  u  \V{5}  +  v \V{2}  +  (1-u-v) \V{1}.
\end{split}
\end{equation}
This configuration is visualized in Figure~\ref{fig:tri_tri}.
\begin{figure}[htb]
 \centering
 \begin{picture}(120,90)
  \put(0,10){\scalebox{-1}[1]{\includegraphics[width=.25\textwidth,clip]{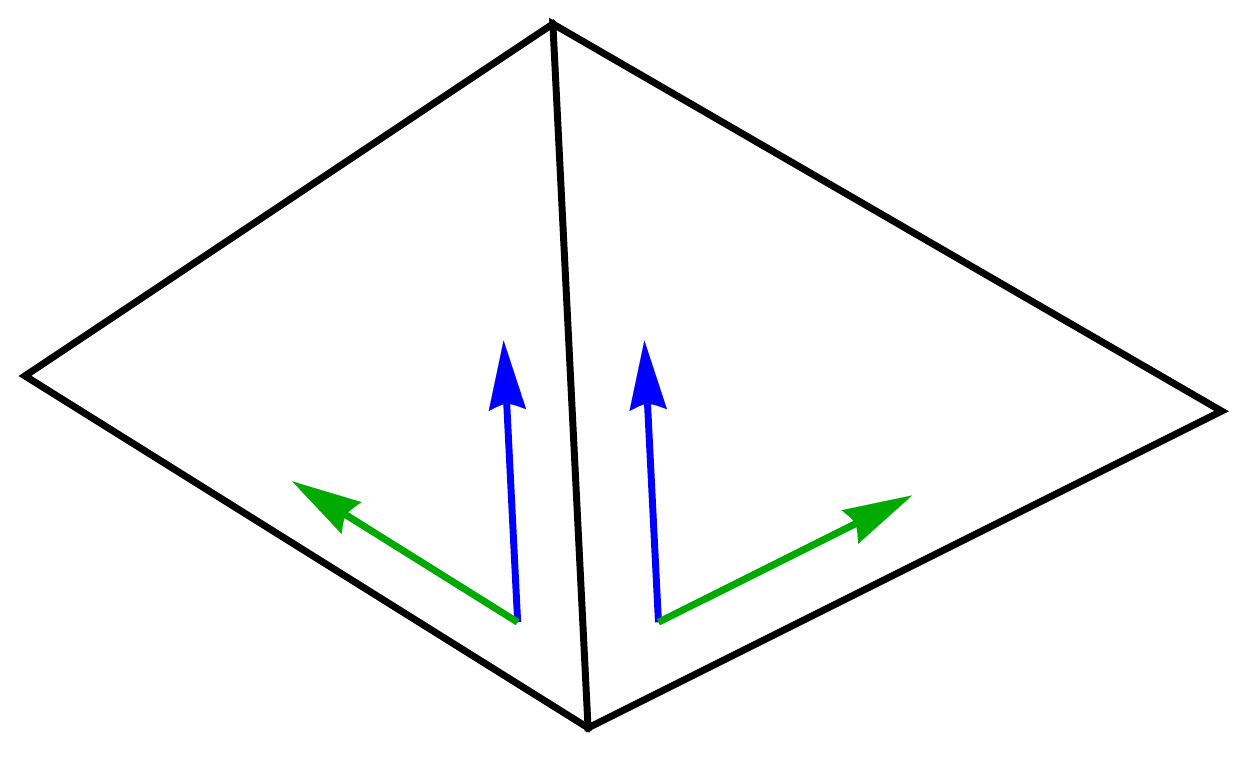}}}
  \put(60,0){$\V{1}$}
  \put(65,80){$\V{2}$}
  \put(-15,40){$\V{3}$}
  \put(120,40){$\V{5}$}
  \put(32,40){$\Omega^{(1)}$}
  \put(78,40){$\Omega^{(2)}$}
 \end{picture}
\caption{A triangle--triangle pair with parameter directions for $u$ (green) and $v$ (blue).}
\label{fig:tri_tri}
\end{figure}

The gluing functions $\alpha_i$, $i=1,2,3$, as well as $\beta_1$, $\beta_2$ are in this case constants, 
\begin{align*}
& \alpha_1 = \det\left[\V{3}-\V{1},\V{2}-\V{1}\right],\quad
\alpha_2 = \det\left[\V{5}-\V{1},\V{2}-\V{1}\right],\\ &\alpha_3 = \det\left[\V{5}-\V{1},\V{3}-\V{1}\right],\\
& \beta_1 = \frac{1}{\norm{\V{2}-\V{1}}} \sprod{\V{2}-\V{1}}{\V{3}-\V{1}},\quad
\beta_2 = \frac{1}{\norm{\V{2}-\V{1}}} \sprod{\V{2}-\V{1}}{\V{5}-\V{1}}.
\end{align*}
Further, let 
\begin{equation}\label{pol-f1-case2}
f^{(\ell)}(u,v) = \sum_{i+j+k = \pd} {b}^{(\ell)}_{i,j,k} \bernT{\pd}{i,j,k}(u,v), \quad \ell=1,2.
\end{equation} 
Also in this case the condition 
 $\parDer{\bfm{n}}{}{\varphi^{(\ell)}}\on_{\bE} \in  \poly_{\pd-1}$ is fulfilled automatically and independently of the geometry of a mesh, and it is straightforward to derive 
 the following result. 
\begin{proposition}  \label{proposition-case2}
Assume that the two neighboring patches, corresponding geometry mappings and functions $f^{(1)}$, $f^{(2)}$ are given by \eqref{domain-case2}, \eqref{geomMapping-case2} and 
\eqref{pol-f1-case2}. Then the isoparametric function \eqref{eq:fun-phi} is $\C{1}$ continuous across the common interface iff the control ordinates satisfy  
\begin{align}
& {b}^{(1)}_{0,j,\pd-j} = {b}^{(2)}_{0,j,\pd-j} = {c}_j, \quad j=0,1,\dots, \pd, \label{eq:Prop2cond1} \\
&  {b}^{(\ell)}_{1,j,\pd-1-j} = {c}_j + 
 \beta_\ell \left(c_{j+1}-c_j\right) +  \frac{1}{\pd} \alpha_\ell\, d_j, \quad  j=0,1,\dots,\pd-1, \quad \ell=1,2,
 \label{eq:Prop2cond2}
 \end{align}
 for any chosen $2\pd+1$ coefficients
 $\left(c_i\right)_{i=0}^\pd$ and $\left(d_i\right)_{i=0}^{\pd-1}$. 
\end{proposition}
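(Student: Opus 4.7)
The plan is to mirror the argument used in Proposition~\ref{proposition-case1}, which is considerably simplified here by the fact that in the triangle--triangle configuration all the gluing and $\beta$-functions are constants. Consequently, the assumption $\parDer{\bfm{n}}{}{\varphi^{(\ell)}}\on_{\bE}\in\poly_{\pd-1}$ is automatically met (as already remarked in the excerpt) and no further polynomial-degree restriction needs to be imposed on the right-hand side of \eqref{eq:geom-cont-3}.

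First I would write $f^{(1)}$ and $f^{(2)}$ in the triangular Bernstein form \eqref{pol-f1-case2} and expand $f^{(\ell)}(0,v)$, $\parDer{u}{}{f^{(\ell)}}(0,v)$ and $\parDer{v}{}{f^{(\ell)}}(0,v)$ along the common edge $u=0$. Using the identity $\parDer{u}{}\bernT{\pd}{i,j,k}=\pd\bigl(\bernT{\pd-1}{i-1,j,k}-\bernT{\pd-1}{i,j,k-1}\bigr)$ and analogously for $\parDer{v}{}$, the only nonvanishing contributions at $u=0$ come from the rows $i=0$ and $i=1$ of B\'ezier ordinates. This yields univariate Bernstein expansions of degree $\pd$ for $f^{(\ell)}(0,v)$ and of degree $\pd-1$ for the partial derivatives, with coefficients expressed linearly in the ${b}^{(\ell)}_{0,j,\pd-j}$ and ${b}^{(\ell)}_{1,j,\pd-1-j}$.

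Next, I would apply the $C^0$ condition \eqref{eq:geom-cont-0}: since $\{\bern{\pd}{j}\}_{j=0}^\pd$ is a basis, matching coefficients gives immediately \eqref{eq:Prop2cond1} with free parameters $c_j$. For the $C^1$ part I would use the reformulation \eqref{eq:geom-cont-3} of the $G^1$ condition, which via \eqref{def:wi} amounts to
\begin{equation*}
\parDer{u}{}{f^{(\ell)}}(0,v) - \beta_\ell \, \parDer{v}{}{f^{(\ell)}}(0,v) = \alpha_\ell \sum_{j=0}^{\pd-1} d_j \bern{\pd-1}{j}(v), \quad \ell=1,2,
\end{equation*}
with $\alpha_\ell,\beta_\ell$ now independent of $v$. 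Plugging in the expansions from the previous step and matching the Bernstein coefficients of degree $\pd-1$ produces, for each $\ell$ and each $j=0,\dots,\pd-1$, the single scalar equation
\begin{equation*}
\pd\bigl({b}^{(\ell)}_{1,j,\pd-1-j} - c_j\bigr) - \pd\,\beta_\ell\,(c_{j+1}-c_j) = \alpha_\ell\, d_j,
\end{equation*}
which, solved for ${b}^{(\ell)}_{1,j,\pd-1-j}$, is exactly \eqref{eq:Prop2cond2}.

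The only subtlety worth mentioning is the converse direction: one must verify that for \emph{every} choice of $2\pd+1$ parameters $(c_j)_{j=0}^\pd$ and $(d_j)_{j=0}^{\pd-1}$, the formulas \eqref{eq:Prop2cond1}--\eqref{eq:Prop2cond2} really yield admissible B\'ezier ordinates consistent with $C^1$ continuity. This is immediate once the equivalence of the coefficient matching with \eqref{eq:geom-cont-0} and \eqref{eq:geom-cont-3} is established, since the $\{\bern{\pd-1}{j}\}$-coefficient of $\omega_{\bfm n}^{(\ell)}(0,v)/\beta$ then equals $d_j$ for both $\ell=1$ and $\ell=2$, so the traces agree. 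No step is expected to present a genuine obstacle; the entire argument is a direct, coefficient-by-coefficient specialization of the general framework of Section~\ref{sec:contCond} to the case where $\alpha_i,\beta_i$ are constants.
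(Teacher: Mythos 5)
Your proposal is correct and follows essentially the same route as the paper's proof: expand $f^{(\ell)}(0,v)$ and its partial derivatives in Bernstein form along $u=0$, read off \eqref{eq:Prop2cond1} from the $C^0$ condition, and match degree-$(\pd-1)$ Bernstein coefficients in the relation $\parDer{u}{}{f^{(\ell)}}(0,v) - \beta_\ell \parDer{v}{}{f^{(\ell)}}(0,v) = \alpha_\ell \sum_j d_j \bern{\pd-1}{j}(v)$ to obtain \eqref{eq:Prop2cond2}. Your explicit remark on the converse direction is a small addition the paper leaves implicit, but it does not change the argument.
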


\begin{proof}
Since $f^{(\ell)}(0,v) = \sum_{j=0}^\pd {b}^{(\ell)}_{0,j,\pd-j} \bern{\pd}{j}(v)$, $\ell=1,2$, we get \eqref{eq:Prop2cond1}.
Additionally, for $\ell=1,2,$ we have
\begin{equation} \label{eq:Prop2proofDer}
\parDer{u}{}{f^{(\ell)}}(0,v) = \pd \sum_{j=0}^{\pd-1} \left({b}^{(\ell)}_{1,j,\pd-1-j} - {b}^{(\ell)}_{0,j,\pd-j} \right) \bern{\pd-1}{j}(v),  \quad
 \parDer{v}{}{f^{(\ell)}}(0,v) = \pd \sum_{j=0}^{\pd-1} \left({b}^{(\ell)}_{0,j+1,\pd-1-j} - {b}^{(\ell)}_{0,j,\pd-j} \right) \bern{\pd-1}{j}(v),
\end{equation}
and \eqref{def:wi}--\eqref{eq:geom-cont-3} imply 
\begin{equation} \label{eq:Prop2proofRelat}
\parDer{u}{}{f^{(\ell)}}(0,v) - \beta_\ell \, \parDer{v}{}{f^{(\ell)}}(0,v) = \alpha_\ell \sum_{j=0}^{\pd-1} d_j 
\bern{\pd-1}{j}(v), \quad \ell=1,2,
\end{equation}
for constants $\alpha_1$, $\alpha_2$, $\beta_1$, $\beta_2$.
Inserting \eqref{eq:Prop2proofDer} into \eqref{eq:Prop2proofRelat} finally proves \eqref{eq:Prop2cond2}.
\end{proof}

\subsection{Quadrilateral--quadrilateral}  \label{subSec:quad-quad}

Suppose that ${\Omega}^{(1)}$ and ${\Omega}^{(2)}$ are both quadrilaterals,
\begin{equation}\label{domain-case3}
{\Omega}^{(1)} = \Q\left(\V{1}, \V{2}, \V{3}, \V{4}\right), \quad 
{\Omega}^{(2)} = \Q\left(\V{1}, \V{2}, \V{5}, \V{6}\right),
\end{equation}
and the geometry mappings equal 
\begin{equation} \label{geomMapping-case3}
\begin{split}
& \bfm{F}^{(1)}(u,v) = (1-u)(1-v) \V{1} + (1-u) v \V{2} + u v \V{3} +  u(1-v) \V{4},\\
& \bfm{F}^{(2)}(u,v) = (1-u)(1-v) \V{1} + (1-u) v \V{2} + u v \V{5} + u(1-v) \V{6}.
\end{split}
\end{equation}
This configuration is visualized in Figure~\ref{fig:quad_quad}.
\begin{figure}[htb]
 \centering
 \begin{picture}(150,105)
  \put(0,10){\scalebox{-1}[1]{\includegraphics[width=.30\textwidth,clip]{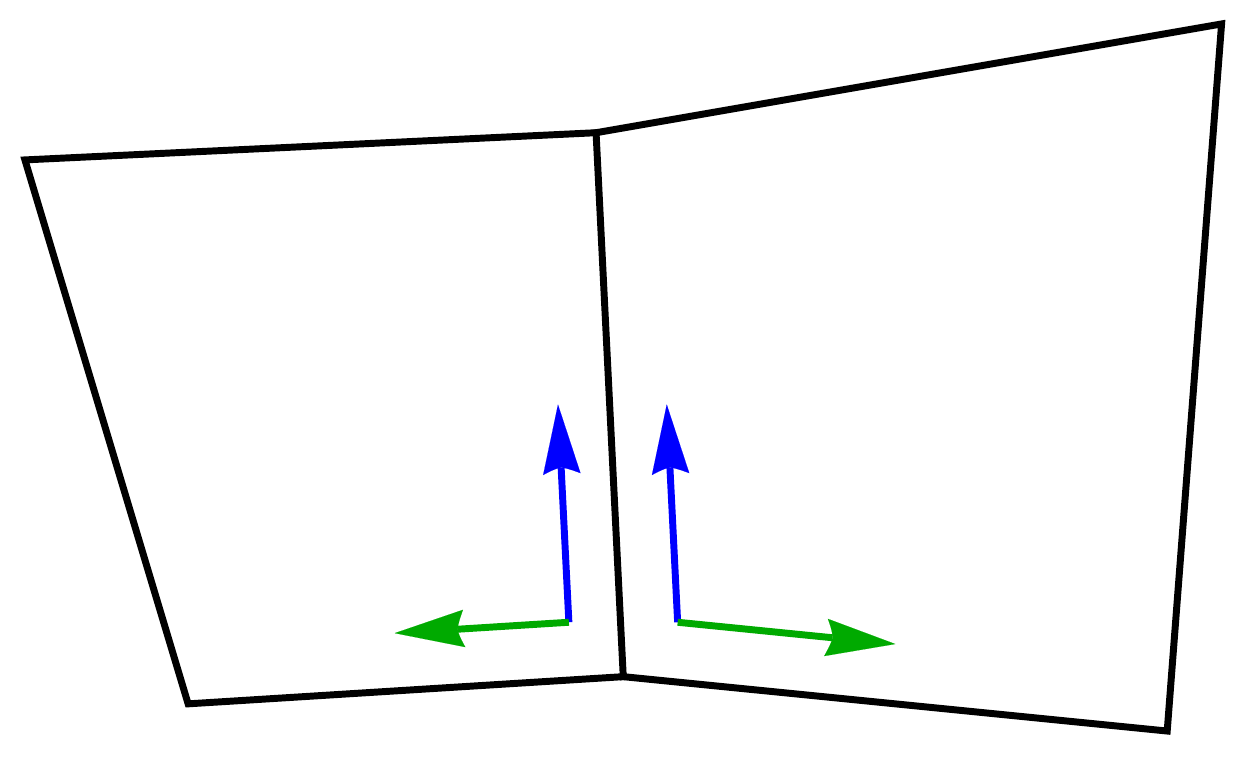}}}
  \put(60,5){$\V{1}$}
  \put(65,85){$\V{2}$}
  \put(0,95){$\V{3}$}
  \put(0,0){$\V{4}$}
  \put(140,80){$\V{5}$}
  \put(120,5){$\V{6}$}
  \put(32,45){$\Omega^{(1)}$}
  \put(95,45){$\Omega^{(2)}$}
 \end{picture}
\caption{A quadrilateral--quadrilateral pair with parameter directions for $u$ (green) and $v$ (blue).}
\label{fig:quad_quad}
\end{figure}

Now, the functions $\alpha_i, \beta_i$, $i=1,2$, are linear polynomials
\begin{align*}
& \alpha_i(v) = (1-v)\,\alpha_{i,0} + v \,\alpha_{i,1}, \\ 
& \quad \alpha_{i,0} = \det\left[\V{2+2i}-\V{1},\V{2}-\V{1}\right], \quad
\alpha_{i,1} = \det\left[\V{1+2i}-\V{2},\V{2}-\V{1}\right], \\
& \beta_i(v)=(1-v)\,\beta_{i,0} + v \,\beta_{i,1}, \\
& \quad \beta_{i,0} = \frac{\sprod{\V{2}-\V{1}}{\V{2+2 i}-\V{1}}}{\norm{\V{2}-\V{1}}^2}, \quad 
\beta_{i,1} = \frac{\sprod{\V{2}-\V{1}}{\V{1+2 i}-\V{2}}}{\norm{\V{2}-\V{1}}^2},
\end{align*}
while $\alpha_3 \in \poly_2$. 
Let $f^{(1)}$ and $f^{(2)}$ be two bivariate polynomials of bi-degree $(\pd,\pd)$,
\begin{equation}\label{pol-f1-case3}
f^{(\ell)}(u,v) = \sum_{i,j=0}^\pd {b}^{(\ell)}_{i,j} \bern{\pd}{i}(u) \bern{\pd}{j}(v), \quad \ell=1,2.
\end{equation}
In this case it could happen that $\parDer{\bfm{n}}{}{\varphi^{(\ell)}}\on_{\bE}$ would be  of degree $\pd$, not $\pd-1$. In particular, this can happen if 
$\alpha_\ell$ reduces to a constant, which, for $\ell =1$, happens if $\bE(\V{1},\V{2})$ is parallel to $\bE(\V{3},\V{4})$, and for $\ell =2$ if $\bE(\V{1},\V{2})$ is 
parallel to $\bE(\V{5},\V{6})$. Moreover, in certain configurations, $\alpha_1$ and $\alpha_2$ are linearly dependent, with $\alpha_2 (v) = \lambda \alpha_1(v)$ and can thus 
be replaced by $\alpha'_1\equiv1$ and $\alpha'_2\equiv\lambda$. E.g. if both elements are rectangles, we have constant $\alpha_\ell$ and $\beta_\ell \equiv 0$. 
See~\cite{BeMa14,KaSaTa17} for a more detailed study of the possible cases. So, the additional 
condition $\parDer{\bfm{n}}{}{\varphi^{(\ell)}}\on_{\bE} \in  \poly_{\pd-1}$ is included to make the proceeding construction independent of the geometry of the mesh.  
\begin{proposition}  \label{proposition-case3}
Assume that the two neighboring elements, corresponding geometry mappings and functions $f^{(1)}$, $f^{(2)}$ are given by \eqref{domain-case3}, \eqref{geomMapping-case3} and 
\eqref{pol-f1-case3}. Then the isoparametric function \eqref{eq:fun-phi} is $\C{1}$ continuous across the common interface and satisfies the additional condition that 
$\parDer{\bfm{n}}{}{\varphi^{(\ell)}}\on_{\bE} \in  \poly_{\pd-1}$,  iff the control ordinates satisfy  
\begin{align}
& {b}^{(1)}_{0,j} = {b}^{(2)}_{0,j} = {c}_j, \nonumber \\[-0.5cm]
 \label{eq:Prop3proofCond} \\
&  {b}^{(\ell)}_{1,j} = c_{j} + \frac{1}{\pd}\left(
\frac{1}{\pd} \left(
(\pd-j) \, \alpha_{\ell,0} \,d_j + j \, \alpha_{\ell,1}\, d_{j-1}
\right) + 
 (\pd-j) \, \beta_{\ell,0} \left(c_{j+1} - c_{j}\right) + j\, \beta_{\ell,1} \left(c_{j} - c_{j-1}\right) \right), 
 \quad \ell=1,2, \nonumber
 \end{align}
$ j=0,1,\dots,\pd$,  for any chosen $2\pd+1$ coefficients
 $\left(c_i\right)_{i=0}^\pd$ and $\left(d_i\right)_{i=0}^{\pd-1}$. 
\end{proposition}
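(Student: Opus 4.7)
The plan is to mirror the argument used for Proposition~\ref{proposition-case2}, the only new feature being that the gluing data $\alpha_\ell,\beta_\ell$ now lie in $\poly_1$ rather than being constants. First, expanding $f^{(\ell)}(0,v)=\sum_{j=0}^{\pd}b^{(\ell)}_{0,j}\bern{\pd}{j}(v)$ and invoking \eqref{eq:geom-cont-0} immediately yields $b^{(1)}_{0,j}=b^{(2)}_{0,j}=:c_j$, which is the first set of identities in \eqref{eq:Prop3proofCond}.

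Next, I would combine \eqref{def:wi} with the packaged condition \eqref{eq:geom-cont-3} (which encodes both the $G^1$ condition \eqref{eq:geom-cont-1} and the additional constraint $\parDer{\bfm{n}}{}{\varphi^{(\ell)}}\on_{\bE}\in\poly_{\pd-1}$) to obtain, for $\ell=1,2$,
$$\parDer{u}{}{f^{(\ell)}}(0,v) - \beta_\ell(v)\,\parDer{v}{}{f^{(\ell)}}(0,v) = \alpha_\ell(v)\sum_{j=0}^{\pd-1}d_j\,\bern{\pd-1}{j}(v).$$
Both sides are polynomials of degree at most $\pd$ in $v$ (the right-hand side because $\alpha_\ell\in\poly_1$ multiplies a polynomial of degree $\pd-1$; the left-hand side because $\parDer{u}{}{f^{(\ell)}}(0,v)\in\poly_\pd$ and $\beta_\ell\,\parDer{v}{}{f^{(\ell)}}(0,v)\in\poly_\pd$). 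The relation therefore amounts to $\pd+1$ scalar equations.

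To perform the coefficient comparison I would apply the standard Bernstein product identities $(1-v)\bern{\pd-1}{j}(v)=\tfrac{\pd-j}{\pd}\bern{\pd}{j}(v)$ and $v\,\bern{\pd-1}{j}(v)=\tfrac{j+1}{\pd}\bern{\pd}{j+1}(v)$ to rewrite $\beta_\ell(v)\,\parDer{v}{}{f^{(\ell)}}(0,v)$ and $\alpha_\ell(v)\sum_{j}d_j\bern{\pd-1}{j}(v)$ in the degree-$\pd$ Bernstein basis, using the convention $c_{-1}=c_{\pd+1}=0$ to absorb the boundary contributions. Matching these expansions against $\parDer{u}{}{f^{(\ell)}}(0,v)=\pd\sum_{j=0}^{\pd}(b^{(\ell)}_{1,j}-c_j)\bern{\pd}{j}(v)$ and solving for $b^{(\ell)}_{1,j}$ produces exactly the second set of identities in \eqref{eq:Prop3proofCond}.

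The principal point of attention, rather than a genuine technical obstacle, is the role of the degree constraint on $\parDer{\bfm{n}}{}{\varphi^{(\ell)}}\on_{\bE}$. Without it, the identity \eqref{eq:geom-cont-2} by itself would allow $\w_{\bfm{n}}^{(\ell)}(0,\cdot)$ to be a genuinely rational function of $v$, and the parametrization of admissible interface data by the $2\pd+1$ free scalars $(c_j)_{j=0}^{\pd}$ and $(d_j)_{j=0}^{\pd-1}$ would break down. Because $\alpha_\ell,\beta_\ell$ truly depend on $\ell$ here (in contrast with Proposition~\ref{proposition-case1}, where the triangle side supplies constant gluing data), the resulting formula for $b^{(\ell)}_{1,j}$ inherits the $\ell$-dependence displayed in the statement.
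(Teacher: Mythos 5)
Your proposal is correct and follows essentially the same route as the paper: the $C^0$ condition gives $b^{(1)}_{0,j}=b^{(2)}_{0,j}=c_j$, and combining \eqref{def:wi} with \eqref{eq:geom-cont-3} yields $\parDer{u}{}{f^{(\ell)}}(0,v)-\beta_\ell(v)\parDer{v}{}{f^{(\ell)}}(0,v)=\alpha_\ell(v)\sum_j d_j\bern{\pd-1}{j}(v)$, which after degree elevation via the Bernstein product identities (exactly the expansions the paper records in Section~\ref{subSec:quad-tria}) gives the stated formula for $b^{(\ell)}_{1,j}$. Your remarks on the role of the extra constraint $\parDer{\bfm{n}}{}{\varphi^{(\ell)}}\on_{\bE}\in\poly_{\pd-1}$ match the paper's discussion preceding the proposition.
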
 
\begin{proof}
Since ${\Omega}^{(1)}$ and ${\Omega}^{(2)}$ are both quadrilaterals, we have
$f^{(\ell)}(0,v) = \sum_{j=0}^\pd {b}^{(\ell)}_{0,j} \bern{\pd}{j}(v)$,
$$
\parDer{u}{}{f^{(\ell)}}(0,v) = \pd \sum_{j=0}^{\pd} \left({b}^{(\ell)}_{1,j} - {b}^{(\ell)}_{0,j} \right) \bern{\pd}{j}(v),\quad 
\parDer{v}{}{f^{(\ell)}}(0,v) = \pd \sum_{j=0}^{\pd-1} \left({b}^{(\ell)}_{0,j+1} - {b}^{(\ell)}_{0,j} \right) \bern{\pd-1}{j}(v),
$$
for $\ell=1,2$. Assuming
$\parDer{\bfm{n}}{}{\varphi^{(\ell)}}\on_{\bE} \in  \poly_{\pd-1}$, it must hold that 
$$
\parDer{u}{}{f^{(\ell)}}(0,v) -\beta_\ell(v) \parDer{v}{}{f^{(\ell)}}(0,v) = \alpha_\ell(v) \sum_{j=0}^{\pd-1} d_j 
\bern{\pd-1}{j}(v), \quad \ell=1,2,
$$
for linear $\alpha_\ell$, $\beta_\ell$, $\ell=1,2$, 
which is further equivalent to \eqref{eq:Prop3proofCond}.
\end{proof}

In Figure~\ref{fig:element_pairs} we plot pairs of elements, triangle--quadrilateral (left), triangle--triangle (center) as well as quadrilateral--quadrilateral (right). For 
some coefficients $c_i$ and $d_i$ we plot the relevant, non-vanishing B\'ezier ordinates in blue and green, respectively. The figure is to be interpreted in the following way: 
if all coefficients $c_i$ and $d_i$ are set to zero, except for $c_1$, then only the B\'ezier ordinates depicted in blue are non-vanishing. On the other hand, if all 
coefficients except for $d_4$ are set to zero, then only the green ordinates are non-vanishing.
\begin{figure}[htb]
 \centering
 \scalebox{-1}[1]{\includegraphics[width=.16\textwidth]{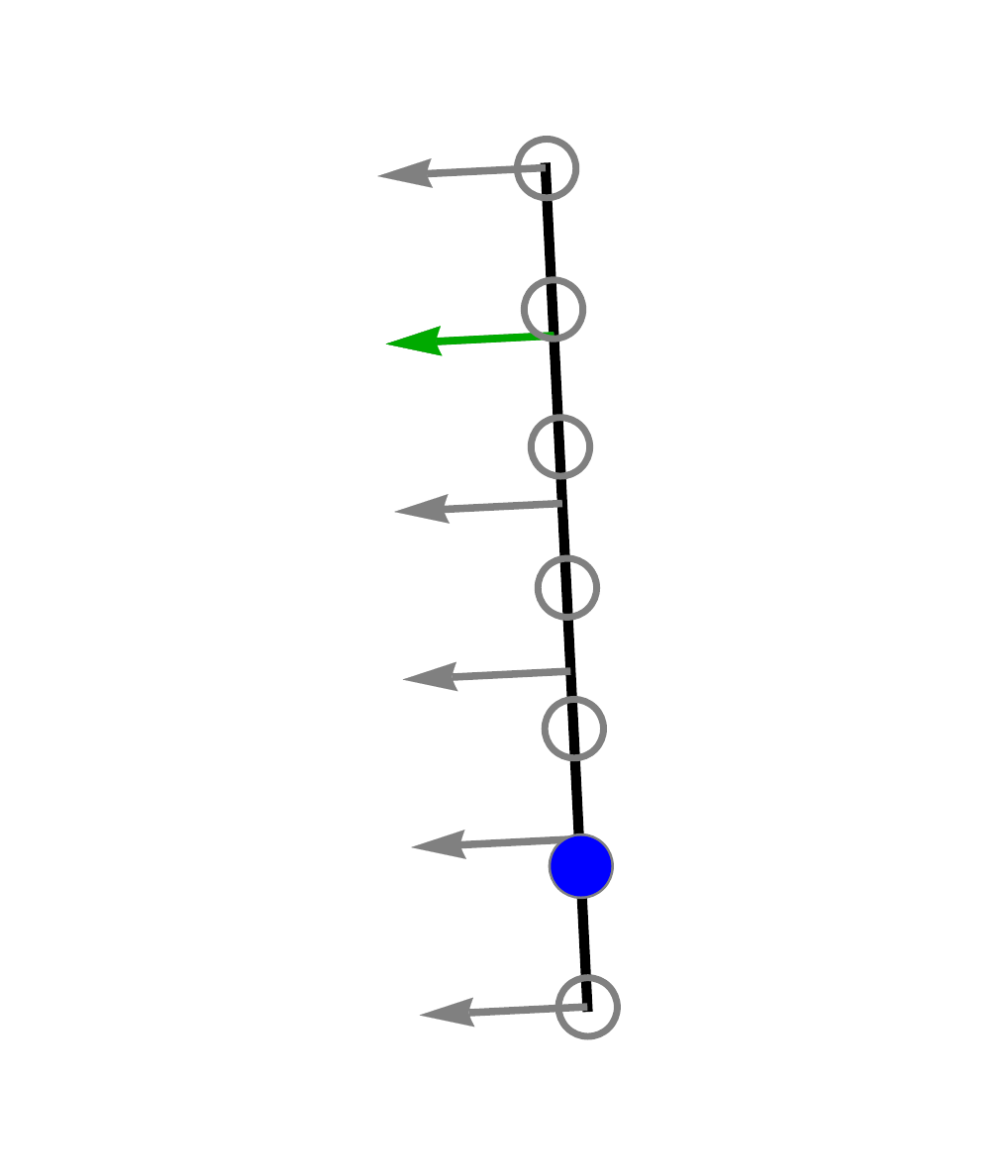}}\;
 \scalebox{-1}[1]{\includegraphics[width=.25\textwidth,clip]{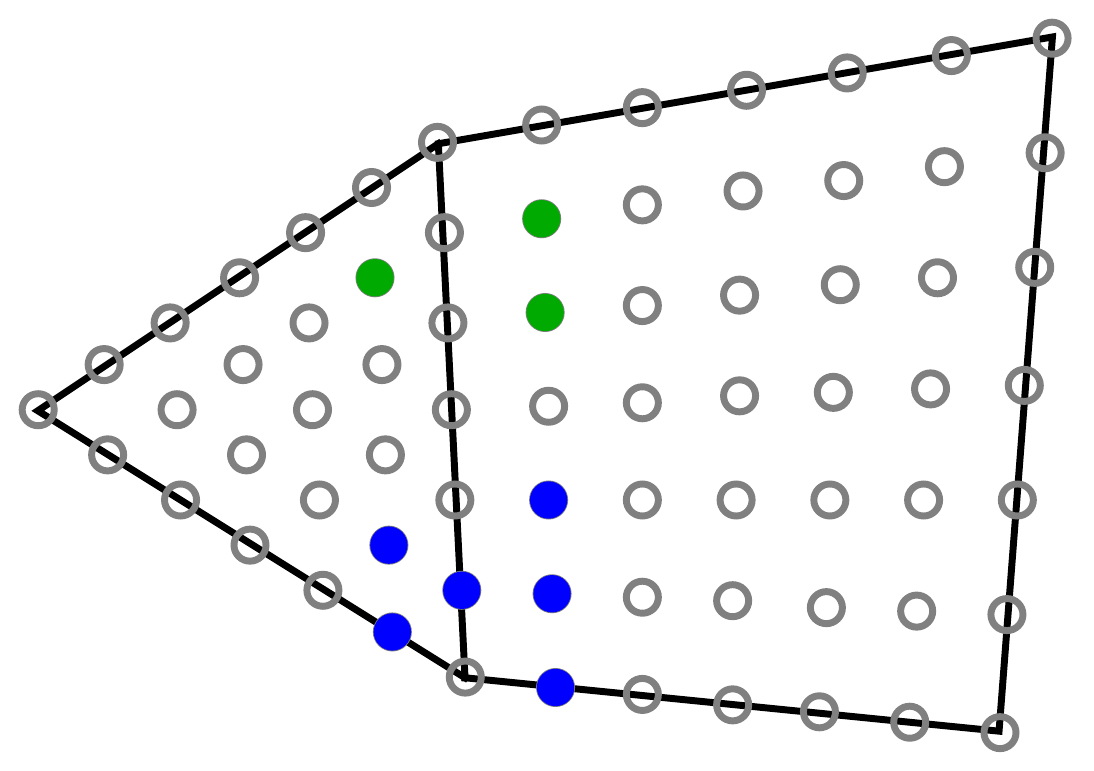}}\quad
\scalebox{-1}[1]{\includegraphics[width=.25\textwidth,clip]{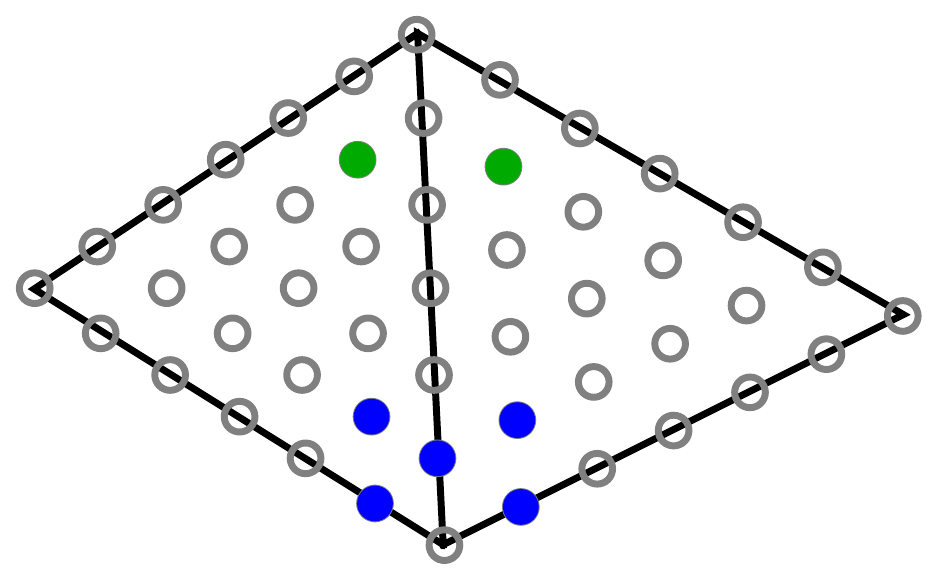}}\quad
\scalebox{-1}[1]{\includegraphics[width=.27\textwidth,clip]{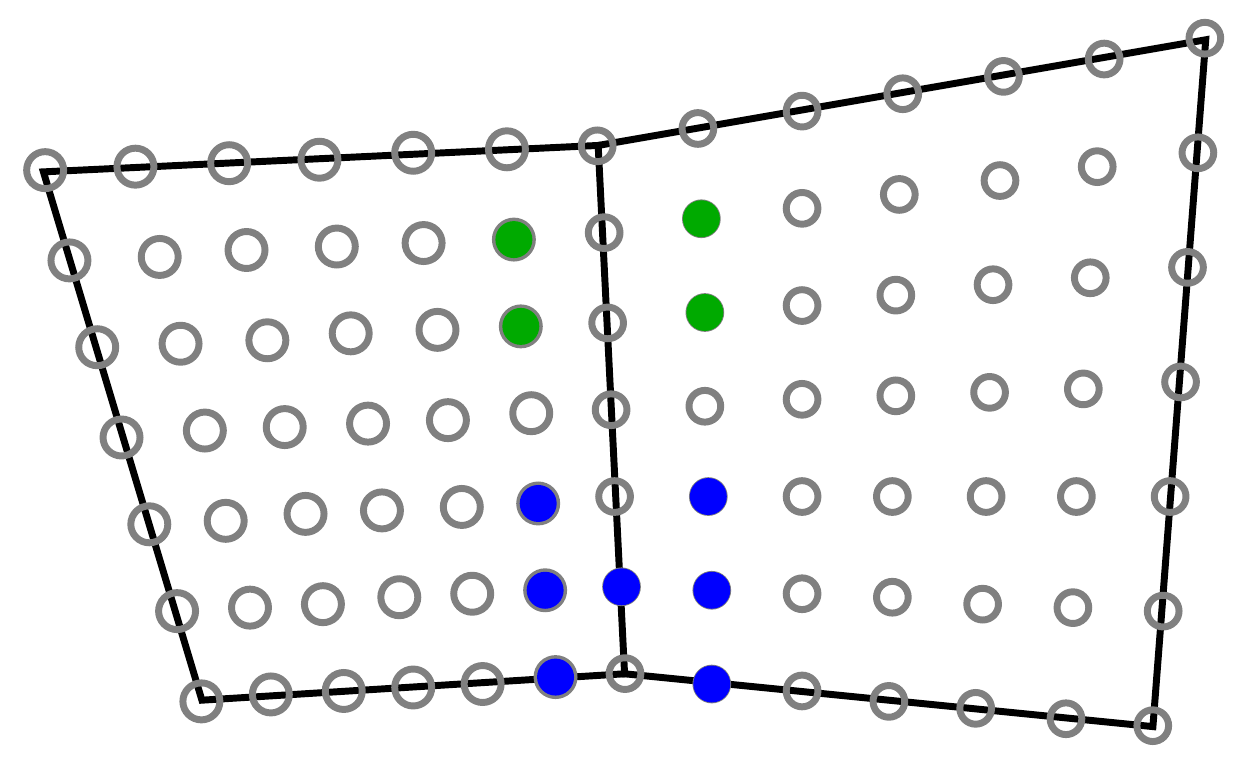}}
\caption{Pairs of elements with non-vanishing B\'ezier ordinates for given coefficients $c_1$ (in blue) and $d_4$ (in green). Note that the structure of non-vanishing ordinates 
is always the same, only shifted by the given index. In the given configurations we have $\pd=6$. The underlying control structure is plotted on the left: the coefficients $c_i$ (controlling function values along the interface) are depicted as circles and the coefficients $d_j$ (controlling crossing derivatives) as arrows. Each coefficient (control variable) $c_i$ or $d_j$ generates one function over the respective element pair.}
\label{fig:element_pairs}
\end{figure} 
As one can see in Figure~\ref{fig:element_pairs}, the $C^1$ functions across an interface couple degrees of freedom in a non-trivial way. The dimension of the $C^1$ space 
around a given vertex and the construction of a basis depends on the geometry, i.e. on the exact configuration of elements around the vertex. In order to simplify the 
construction, we demand $\C{2}$ continuity at vertices, thus fixing the dimension of the space and avoiding special cases. This strategy of imposing super-smoothness is a 
common tool for triangle meshes, see~\cite{ChHe89,LaSc07} or~\cite{KaSaTa19} for spline patches. This leads us to the interpolation problem as described in the following section.

\section{Analysis of the Argyris-like space}
\label{sec:SpaceAnalysis}

In order to analyze the properties of the space $\A_{\pd}$ (defined in \eqref{splineSpaceAm}), we formulate 
an interpolation problem that uniquely characterizes the elements of the spline space. The interpolation problem provides the dimension formula for $\A_{\pd}$ and gives rise to a projection operator that is used to prove the approximation properties of the space.

\subsection{Interpolation problem}

The following theorem states how the elements of $\A_{\pd}$ can be described in terms of interpolation data provided at the vertices, along the edges and in the interior of the mixed mesh. 

Recall that a point set is called \emph{unisolvent} in a function space, if any function in the space is uniquely determined by interpolating the values at the points in this set.

\begin{theorem} \label{Thm-intProblem}
Let $\Omega \subset \RR^2$ be an open domain on which a mixed mesh, satisfying \eqref{DomainQmega}, is defined. Then there exists a unique isoparametric spline function 
$\varphi \in \A_\pd$, $p\geq 5$, that satisfies the following interpolation conditions:
\begin{description}
\item[(A)]
For every vertex $\V{i}$, $i \in \indV$, let
\begin{align*}
\Der{x}{a}\Der{y}{b}\varphi\left(\V{i} \right) = \sigma_{i,a,b}, \quad 0\leq a+b \leq 2,
\end{align*} 
for some given values $\sigma_{i,a,b} \in \RR$.
\item[(B)]
For every edge $\bE^{(i)}$, $i \in \indE$,
choose $\pd-5$ pairwise different points $\bR^{(i)}_{\ell} \in \bE^{(i)}$,   
$\ell = 1,2,\dots,\pd-5$, as well as $\pd-4$ pairwise different points $\bS^{(i)}_{\ell} \in \bE^{(i)}$,   
$\ell = 1,2,\dots,\pd-4$, and let
\begin{align*}
 \varphi\left(\bR^{(i)}_{\ell} \right) = \sigma_{i, \ell}, \quad \ell = 1,2,\dots, \pd-5,\quad \parDer{\bfm{n}_i}{}\varphi \left(\bS^{(i)}_{\ell} \right) = w_{i, \ell}, 
 \quad \ell = 1,2, \dots, \pd-4, 
\end{align*} 
for some given values $\sigma_{i,\ell}, w_{i,\ell}\in\RR$. 
\item[(C)]
For every quadrilateral element~$\Omega^{(i)}$, $i \in \indQ$, choose $(\pd-3)^2$ pairwise different points 
\[
 \bQ^{(i)}_{\ell, k} = \bfm{F}^{(i)}\left({\widehat{\bQ}}^{(i)}_{\ell, k}\right), \quad \ell,k = 1,2,\dots, \pd-3,
\]
where ${\widehat{\bQ}}^{(i)}_{\ell, k}$ are unisolvent in 
$\mathrm{span}_{2\leq  j_1,j_2 \leq \pd-2}\left(\bernB{\pd,\pd}{j_1,j_2}\right)$,
and let
\begin{align*}
& \varphi\left(\bQ^{(i)}_{\ell, k} \right) = \sigmaBox_{ i, \ell,k }, \quad \ell,k = 1,2,\dots, \pd-3,
\end{align*} 
for some given values $\sigmaBox_{ i,\ell,k}\in\RR$. 
\item[(D)]
For every triangular element~$\Omega^{(i)}$, with $i \in \indT$, choose $\binom{\pd-4}{2}$ pairwise different points 
\[
 \bQ^{(i)}_{\ell, k} = \bfm{F}^{(i)}\left({\widehat{\bQ}}^{(i)}_{\ell, k}\right), \quad \ell = 1, 2, \dots, \pd-5, \quad k = 1,2,  \dots, \pd-4-\ell,
\]
where ${\widehat{\bQ}}^{(i)}_{\ell, k}$ 
are unisolvent in $\mathrm{span}_{\stackrel{2\leq  j_1,j_2,j_3}{j_1+j_2+j_3=p}}\left(\bernT{\pd}{j_1,j_2,j_3}\right)$,
and let 
\begin{align*}
& \varphi\left(\bQ^{(i)}_{\ell, k} \right) = \sigmaTri_{ i, \ell,k }, \quad \ell = 1, 2, \dots, \pd-5, \quad k = 1, 2, \dots, \pd-4-\ell,
\end{align*} 
for some given values $\sigmaTri_{ i,\ell,k}\in\RR$. 
\end{description}
\end{theorem}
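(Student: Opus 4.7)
The plan is to construct $\varphi$ element-by-element in Bernstein--B\'ezier form, determining the ordinates of $f^{(i)}=\varphi\circ\bfm{F}^{(i)}$ in three stages — vertex corners, edge rings, and element interiors — and showing that at each stage the prescribed data together with the smoothness constraints fix the new ordinates uniquely. Because the interpolation problem is linear, this establishes injectivity; existence follows from matching the number of conditions to the total number of free ordinates of $\A_\pd$.

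\emph{Vertex step.} At each vertex $\V{i}$ the six data $\sigma_{i,a,b}$ with $a+b\leq 2$ correspond, via the chain rule with the regular (bi)linear map $\bfm{F}^{(j)}$, to the six parametric second-order Taylor coefficients of $f^{(j)}$ at the preimage of $\V{i}$, and these in turn fix the six B\'ezier ordinates of $f^{(j)}$ with local indices satisfying $i_1+i_2\leq 2$ on every incident element. The same physical data are used on every incident element, so the resulting ordinates are automatically consistent, enforcing $C^2$ regularity at the vertex.

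\emph{Edge step.} By Propositions~\ref{proposition-case1}--\ref{proposition-case3}, the B\'ezier ordinates of the boundary ring and the adjacent second ring on both sides of a common edge are parametrized by a shared vector of $2\pd+1$ scalars $(c_j)_{j=0}^{\pd}$ (controlling the degree-$\pd$ edge trace) and $(d_j)_{j=0}^{\pd-1}$ (controlling the degree-$(\pd-1)$ normal derivative). The vertex step already fixes $c_0,c_1,c_2,c_{\pd-2},c_{\pd-1},c_{\pd}$ — these are six of the boundary-ring ordinates — and fixes $d_0,d_1,d_{\pd-2},d_{\pd-1}$ indirectly, via the second-ring ordinates at the vertex corners by inverting \eqref{eq:G1cond-case1-Q}--\eqref{eq:Prop3proofCond} (the inversion is well-posed since the leading gluing coefficients $\alpha_{\ell,0},\alpha_{\ell,1}$ are nonzero by regularity of $\bfm{F}^{(\ell)}$). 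The remaining $\pd-5$ unknowns among $(c_j)$ are determined by the $\pd-5$ point values $\varphi(\bR^{(i)}_{\ell})$, giving a univariate polynomial interpolation problem for the degree-$\pd$ edge trace at $\pd+1$ pairwise distinct nodes; the remaining $\pd-4$ unknowns among $(d_j)$ are determined analogously by the $\pd-4$ normal-derivative values at the $\bS^{(i)}_{\ell}$, a Hermite-type interpolation problem for a polynomial of degree $\pd-1$ at $\pd$ nodes. Both subproblems have unique solutions.

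\emph{Interior step and conclusion.} After the vertex and edge steps, the only ordinates of $f^{(i)}$ still undetermined on each element are those with $\min(j_1,j_2,j_3)\geq 2$ on a triangle (numbering $\binom{\pd-4}{2}$) and $2\leq j_1,j_2\leq \pd-2$ on a quadrilateral (numbering $(\pd-3)^2$). The assumed unisolvence of $\widehat{\bQ}^{(i)}_{\ell,k}$ in the span of the corresponding Bernstein basis functions implies that the interior values $\sigmaTri_{i,\ell,k}$ and $\sigmaBox_{i,\ell,k}$, once the already-determined contributions of the boundary rings are subtracted, fix these remaining ordinates uniquely. Summing the counts $6|\indV|+(2\pd-9)|\indE|+(\pd-3)^2|\indQ|+\binom{\pd-4}{2}|\indT|$ matches the dimension of $\A_\pd$, so the linear system underlying the interpolation is square and, by the preceding injectivity, invertible.

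\emph{Main obstacle.} The technical heart of the argument is the edge step: one must verify that the vertex-derived values of $c_j$ and $d_j$ at each endpoint, obtained from the $C^2$ data through the Hermite-to-B\'ezier correspondence on either incident element and combined with the gluing relations of Section~\ref{sec:contCond}, in fact agree on both sides of the edge and are consistent with the shared $(c_j,d_j)$ parametrization of Propositions~\ref{proposition-case1}--\ref{proposition-case3}. After expressing the vertex Taylor data in the edge-aligned frame $(\bfm{n}_i,\V{i_2}-\V{i_1})$, this consistency follows from Lemmas~\ref{lemma-connection} and \ref{lemma-formulaForDirDer}, the non-vanishing of the leading gluing coefficients, and the fact that $\beta$ is constant on a (bi)linearly parametrized edge.
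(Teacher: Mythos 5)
Your proposal is correct and follows essentially the same three-stage argument as the paper: $C^2$ vertex data pinned down via the chain rule, the edge trace (degree $\pd$, with $6$ endpoint Hermite conditions plus $\pd-5$ values) and the normal derivative along the edge (degree $\pd-1$, with $4$ endpoint conditions plus $\pd-4$ values) determined as univariate Hermite--Lagrange interpolation problems and converted to B\'ezier ordinates through Propositions~\ref{proposition-case1}--\ref{proposition-case3}, and the interior ordinates fixed by the unisolvence assumptions. The only cosmetic difference is that you recover the endpoint coefficients $d_0,d_1,d_{\pd-2},d_{\pd-1}$ by inverting the ordinate relations, whereas the paper reads them off directly from the normal-derivative formula of Lemma~\ref{lemma-formulaForDirDer}; both resolve the cross-edge consistency issue you correctly identify as the crux.
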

\begin{proof}
We need to show that the interpolation conditions {\bf (A)}--{\bf (D)} uniquely determine the bivariate polynomial on every patch $\overline{\Omega^{(i)}} = 
\bfm{F}^{(i)}\left(\mathcal{D}^{(i)}\right)$, $i \in \indQ \cup \indT$, and that the continuity conditions are satisfied. 
Let $\V{j} $ be a vertex of $\overline{\Omega^{(i)}}$, obtained as
$\V{j} = \bfm{F}^{(i)}\left(u^{(i)}_j,v^{(i)}_j\right)$ for some 
$$\bfm{u}^{(i)}_j := \left(u^{(i)}_j,v^{(i)}_j\right) \in \left\{(0,0), (0,1), (1,0), (1,1) \right\}.$$  
From conditions in {\bf (A)} we get
$$
\grad{\varphi}\left(\V{j}\right) = \left(\sigma_{j,1,0}, \sigma_{j,0,1}\right), \quad
\Hess{\varphi}\left(\V{j}\right) = \begin{bmatrix}
\sigma_{j,2,0} & \sigma_{j,1,1}\\
\sigma_{j,1,1} & \sigma_{j,0,2}
\end{bmatrix}, 
$$
and from 
\begin{align*}
\grad{f^{(i)}}\left(\bfm{u}^{(i)}_j\right) & = \grad{\varphi}\left(\V{j}\right) \cdot \jac{\bfm{F}^{(i)}} \left(\bfm{u}^{(i)}_j\right) =:  
 \left(s^{(i)}_{j,1,0}, s^{(i)}_{j,0,1}\right) =: \bfm{s}^{(i)}_j,\\
\Hess{f^{(i)}}\left(\bfm{u}^{(i)}_j\right) & = \jac{\bfm{F}^{(i)}}\left(\bfm{u}^{(i)}_j\right)^{T} \cdot \Hess{\varphi}\left(\V{j}\right) \cdot 
\jac{\bfm{F}^{(i)}}\left(\bfm{u}^{(i)}_j\right) + \\
& + \Der{x}{}{\varphi}\left(\V{j}\right) \Hess{\bfm{F}_1^{(i)}}\left(\bfm{u}^{(i)}_j\right)+
\Der{y}{}{\varphi}\left(\V{j}\right) \Hess{\bfm{F}_2^{(i)}}\left(\bfm{u}^{(i)}_j\right) =:
\begin{bmatrix}
s^{(i)}_{j,2,0} & s^{(i)}_{j,1,1}\\
s^{(i)}_{j,1,1} &s^{(i)}_{j,0,2},
\end{bmatrix} =: S^{(i)}_j,
\end{align*} 
we obtain the $\C{2}$ interpolation conditions for $f^{(i)}$ at  $\bfm{u}^{(i)}_j$, i.e.,
\begin{align*}
\parDer{u}{a}\parDer{v}{b} f^{(i)} \left(\bfm{u}^{(i)}_j\right) = s^{(i)}_{j,a,b}, \quad 0\leq a+b \leq 2.
\end{align*} 
Further, let $\bE^{(k)}$, $k \in \indE$, be any edge of $\overline{\Omega^{(i)}}$, with boundary vertices 
$\V{k_0}$, $\V{k_1}$, $k_0, k_1 \in \indV$, parameterized as
$$
\bE^{(k)} = \defset{\bfm{F}^{(i)} \left(\bfm{\epsilon}^{(k)}(t)\right)}{t\in (0,1)}, \quad 
\bfm{\epsilon}^{(k)}(t):=  (1-t) \, \bfm{u}^{(i)}_{k_0} + t\, \bfm{u}^{(i)}_{k_1},
$$
and let $$\theta_k(t) := \sum_{\ell=0}^\pd c^{(k)}_\ell \bern{\pd}{\ell}(t) = 
f^{(i)} \left(\bfm{\epsilon}^{(k)}(t)\right), 
\quad \w_k(t) :=  \sum_{\ell=0}^{\pd-1} d^{(k)}_\ell \bern{\pd-1}{\ell}(t) = 
\parDer{\bfm{n}_k}{}\varphi \left( 
\bfm{F}^{(i)}\left(\bfm{\epsilon}^{(k)}(t)\right)
 \right), \quad t \in [0,1],
$$
be the restriction of $\varphi$ and $\parDer{\bfm{n}_k}{}\varphi $ on the edge $\bE^{(k)}$ expressed in local coordinates. Further, let
$t^{(i)}_{k,\ell}$ be the parameters, such that $\bR_{k, \ell} = 
\bfm{F}^{(i)}\left(\bfm{\epsilon}^{(k)}\left(t^{(i)}_{k,\ell}\right)\right)$. 
From {\bf (A)} and  {\bf (B)} we get $\pd+1$ conditions 
\begin{align*}
 & \theta_k(\ell) = \sigma_{k_\ell,0,0},  \quad  
 \theta'_k(\ell) = \sprod{\bfm{s}^{(i)}_{k_\ell}}
{\bfm{u}^{(i)}_{k_1}-\bfm{u}^{(i)}_{k_0}}, \quad
\theta''_k(\ell) =
\sprod{\bfm{u}^{(i)}_{k_1}-\bfm{u}^{(i)}_{k_0}}
{S^{(i)}_{k_\ell}
\left(\bfm{u}^{(i)}_{k_1}-\bfm{u}^{(i)}_{k_0}\right)^T},  \quad \ell = 0,1,\\
& \theta_k\left(t^{(i)}_{k,\ell}\right) = \sigma_{k, \ell}, \quad \ell=1,2,\dots, \pd-5,
\end{align*}
which uniquely determine $\theta_k$. Note that 
$\bfm{u}^{(i)}_{k_1}-\bfm{u}^{(i)}_{k_0} \in \left\{(\pm 1, 0), (0, \pm 1), \pm (1,-1) \right\}$.
From \eqref{directional-der} it is straightforward to see that {\bf (A)} and  {\bf (B)} give also the values of 
$\bfm{G}^{(k)}$, $\parDer{u}{}{\bfm{G}^{(k)}}$ and 
$\parDer{v}{}{\bfm{G}^{(k)}}$ at 
$\bfm{u}^{(i)}_{k_0}, \bfm{u}^{(i)}_{k_1}$, where $\bfm{G}^{(k)}$
is defined in  \eqref{directional-der2}. The conditions
\begin{align*}
\w_k(\ell) & = \sprod{\bfm{n}_k}{\bfm{G}^{(k)}\left(\bfm{u}^{(i)}_{k_{\ell}}\right)}, \quad \ell =0,1,\\
\w'_k(\ell)   & =  \left(u^{(i)}_{k_1} - u^{(i)}_{k_0}\right) 
\sprod{\bfm{n}_k}{\parDer{u}{}\bfm{G}^{(k)}\left(\bfm{u}^{(i)}_{k_{\ell}}\right)} + 
\left(v^{(i)}_{k_1} - v^{(i)}_{k_0}\right) 
\sprod{\bfm{n}_k}{\parDer{v}{}\bfm{G}^{(k)}\left(\bfm{u}^{(i)}_{k_{\ell}}\right)}, \quad \ell =0,1, \\
\w_k\left(t^{(i)}_{k,\ell}\right) & = w_{k, \ell}, \quad \ell=1,2,\dots, \pd-4,
\end{align*}
then uniquely determine $w_k$. 

Suppose now that $i \in \indQ$ and 
$
f^{(i)}(u,v) = \sum_{j, \ell=0}^\pd {b}^{(i)}_{j, \ell} \bern{\pd}{j}(u) \bern{\pd}{\ell}(v). 
$
Following Proposition~\ref{proposition-case1}, \ref{proposition-case2} and
\ref{proposition-case3} we see that polynomials $\theta_k$ and $w_k$ uniquely determine the ordinates 
${b}^{(i)}_{j, \ell}$ and ${b}^{(i)}_{\ell, j}$ for $\ell=0,1,\pd-1, \pd$, $j=0,1,\dots,\pd$. The remaining $(\pd-3)^2$ ordinates ${b}^{(i)}_{j, \ell}$, $j, \ell=2,3,\dots,\pd-2$, 
are computed uniquely from conditions  {\bf (C)}.  Similarly for 
 $i \in \indT$ and 
$
f^{(i)}(u,v) = \sum_{j+ \ell+r = \pd} {b}^{(i)}_{j, \ell,r} \bernT{\pd}{j, \ell,r}(u,v)
$.
Polynomials $\theta_k$ and $\w_k$ uniquely determine the ordinates ${b}^{(i)}_{j, \ell,r}$ with $j,\ell, r \in \{\pd, \pd-1\}$, while the remaining ones follow from conditions 
{\bf (D)}. Since the continuity conditions are satisfied by the construction, the proof is completed. 
\end{proof}

One possible choice for the interpolation points, which we further use in the examples, is the following. 
For a given edge~$\bE^{(i)}=\bE(\V{1},\V{2})$ we first compute an equidistant set of points 
$$\widetilde{\bR}_{\ell} = \frac{2 \left\lfloor\frac{p}{2}\right\rfloor-2-\ell}{2 \left\lfloor\frac{p}{2}\right\rfloor-2}\left(\frac{p-2}{p} \V{1} + \frac{2}{p} \V{2}\right) 
+\frac{\ell}{2 \left\lfloor\frac{p}{2}\right\rfloor-2} \left(\frac{2}{p} \V{1} + \frac{p-2}{p} \V{2}\right),
\quad \ell = 1, 2,\dots,  2\left\lfloor\frac{p}{2}\right\rfloor -3.
$$ 
Then for odd degree $p$ we choose
\begin{equation} \label{eq:definition-Ril-a}
\bR^{(i)}_{\ell} := \widetilde{\bR}_{\ell}, \quad \ell =1,\dots, \frac{p-5}{2}, \quad
\bR^{(i)}_{\ell} := \widetilde{\bR}_{\ell+1}, \quad \ell =\frac{p-3}{2}, \dots, p-5, \quad
\bS^{(i)}_{\ell} := \widetilde{\bR}_{\ell}, \quad \ell = 1,\dots, p-4, 
\end{equation}
and for even $p$
\begin{equation}  \label{eq:definition-Ril-b}
\begin{split}
& \bR^{(i)}_{\ell} := \widetilde{\bR}_{\ell}, \quad \ell =1,\dots, \frac{p-6}{2}, \quad
\bR^{(i)}_{\frac{p-4}{2}} := \widetilde{\bR}_{\frac{p-2}{2}}, \quad 
\bR^{(i)}_{\ell} := \widetilde{\bR}_{\ell+2}, \quad \ell =\frac{p-2}{2}, \dots, p-5, \\
& \bS^{(i)}_{\ell} := \widetilde{\bR}_{\ell}, \quad \ell =1,\dots, \frac{p-4}{2}, \quad
\bS^{(i)}_{\ell} := \widetilde{\bR}_{\ell+1}, \quad \ell =\frac{p-2}{2}, \dots, p-4.
\end{split}
\end{equation}
Additional interpolation points in the interior are chosen as  
\begin{equation}\label{eq:definition-Qil-quad}
\bQ^{(i)}_{\ell, k} := \bfm{F}^{(i)}\left( \frac{\ell}{\pd},  \frac{k}{\pd}  \right)
\in \Omega^{(i)}, \quad \ell,k = 2,\dots, \pd-2,
\end{equation}
for quadrilaterals and
\begin{equation}\label{eq:definition-Qil-tri}
\bQ^{(i)}_{\ell, k} := \bfm{F}^{(i)}\left( \frac{\ell}{\pd},  \frac{k}{\pd}  \right) \in \Omega^{(i)},
\quad \ell = 2,  \dots, \pd-2, \quad k = 2,  \dots, \pd-2-\ell,
\end{equation}
for triangles. For the graphical interpretation of these interpolation points in the case $p=8$ and $p=9$ see Fig.~\ref{fig:IntData}.

\subsection{Properties of the space $\A_\pd$}

Due to the interpolation conditions {\bf (A)}--{\bf (D)}, we have the following dimension formula (cf. Fig.~\ref{fig:IntData}).
\begin{corollary}\label{coro:dimension}
The dimension of the space $\A_\pd$ equals 
$$\dim \A_\pd = 6 \, |\indV|  + \left(2 \pd - 9\right) \, |\indE| + (\pd-3)^2 \, |\indQ| + \binom{\pd-4}{2} \, |\indT|.$$
\end{corollary}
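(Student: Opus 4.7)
The plan is to derive the dimension formula as a direct counting consequence of Theorem~\ref{Thm-intProblem}. The interpolation theorem asserts that, given arbitrary interpolation data of the form prescribed in conditions \textbf{(A)}--\textbf{(D)}, there exists a unique spline $\varphi \in \A_\pd$ matching this data. Equivalently, the linear evaluation map from $\A_\pd$ into the space of admissible data tuples is a bijection, so $\dim \A_\pd$ equals the total number of scalar interpolation values.

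The main task is therefore to count, for each piece of the mesh, how many scalars appear in the corresponding block of conditions. First, for each vertex $\V{i}$, $i\in \indV$, condition \textbf{(A)} prescribes the values $\sigma_{i,a,b}$ for multi-indices $(a,b)$ with $0 \leq a+b \leq 2$; since there are $1+2+3=6$ such multi-indices, each vertex contributes $6$ scalars, for a total of $6\,|\indV|$. Next, for each edge $\bE^{(i)}$, $i \in \indE$, condition \textbf{(B)} prescribes $\pd-5$ function values $\sigma_{i,\ell}$ and $\pd-4$ normal derivative values $w_{i,\ell}$, yielding $(\pd-5)+(\pd-4)=2\pd-9$ scalars per edge, hence a contribution of $(2\pd-9)\,|\indE|$ in total.

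For the interior of each quadrilateral element $\Omega^{(i)}$, $i\in \indQ$, condition \textbf{(C)} prescribes $(\pd-3)^2$ scalars $\sigmaBox_{i,\ell,k}$, giving a contribution of $(\pd-3)^2\,|\indQ|$. For each triangle $\Omega^{(i)}$, $i\in \indT$, condition \textbf{(D)} prescribes $\sigmaTri_{i,\ell,k}$ with $\ell = 1, \ldots, \pd-5$ and $k = 1, \ldots, \pd-4-\ell$; the total count per triangle equals
\[
\sum_{\ell=1}^{\pd-5}(\pd-4-\ell) \;=\; (\pd-5)(\pd-4) - \tfrac{(\pd-5)(\pd-4)}{2} \;=\; \tbinom{\pd-4}{2},
\]
so the triangles contribute $\binom{\pd-4}{2}\,|\indT|$. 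Summing the four contributions yields the claimed formula.

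There is essentially no obstacle here beyond bookkeeping, since all the nontrivial content (unisolvence, existence, uniqueness) is already absorbed into Theorem~\ref{Thm-intProblem}; the only mild care needed is to verify the triangle count from the doubly-indexed range, which is handled by the elementary sum above.
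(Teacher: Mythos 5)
Your proposal is correct and matches the paper's approach exactly: the paper states the corollary as an immediate consequence of Theorem~\ref{Thm-intProblem}, obtained by counting the scalar data in conditions \textbf{(A)}--\textbf{(D)}, which is precisely the bookkeeping you carry out (including the correct evaluation of the triangle sum $\sum_{\ell=1}^{\pd-5}(\pd-4-\ell)=\binom{\pd-4}{2}$). No further comment is needed.
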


Moreover, the space $\A_\pd$ contains bivariate polynomials of total degree $\pd$.
\begin{lemma}\label{lem:poly-reproduction}
We have $\poly^{2}_{\pd} \subset \A_\pd $.
\end{lemma}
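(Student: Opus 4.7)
The plan is a direct verification that every $\varphi\in\poly^2_\pd$ satisfies each of the defining conditions of $\A_\pd$ in \eqref{splineSpaceAm}. Since a bivariate polynomial is $\C{\infty}$ on~$\RR^2$, the global $\C{1}$ regularity across edges and the local $\C{2}$ regularity at vertices are automatic. Hence only the three degree conditions need to be checked.

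First, I would consider a triangle element $\Omega^{(i)}$, $i\in\indT$. Here $\bfm{F}^{(i)}$ is affine, so for any monomial $x^{a}y^{b}$ with $a+b\leq\pd$, the pull-back $x^{a}y^{b}\circ\bfm{F}^{(i)}$ is a polynomial in $(u,v)$ of total degree at most $a+b\leq\pd$. By linearity this gives $\varphi\circ\bfm{F}^{(i)}\in\poly^2_\pd$, as required.

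Next, for a quadrilateral element $\Omega^{(i)}$, $i\in\indQ$, write the bilinear map in the form $\bfm{F}^{(i)}(u,v) = \bfm{a} + \bfm{b}u + \bfm{c}v + \bfm{d}\,uv$. A monomial $x^{a}y^{b}$ with $a+b\leq\pd$ pulled back by $\bfm{F}^{(i)}$ becomes a polynomial in $(u,v)$ in which each of the variables $u$ and $v$ appears with individual degree at most $a+b\leq\pd$, since each factor contributes degree at most one in $u$ and at most one in $v$. Therefore $\varphi\circ\bfm{F}^{(i)}\in\poly^2_{\pd,\pd}$.

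Finally, for the normal derivative condition on an edge $\bE^{(i)}=\bE(\V{i_1},\V{i_2})$, I would observe that $\parDer{\bfm{n}_i}{}\varphi$ is again a bivariate polynomial of total degree at most $\pd-1$ in $(x,y)$, and that $\bE^{(i)}$ is parametrized by the affine map $v\mapsto(1-v)\V{i_1}+v\V{i_2}$. Composition with this affine parametrization yields a univariate polynomial in $v$ of degree at most $\pd-1$, so $\parDer{\bfm{n}_i}{}\varphi\on_{\bE^{(i)}}\in\poly_{\pd-1}$. There is no real obstacle here: the only substantive point is the bilinear-composition argument, which exploits the fact that each coordinate of $\bfm{F}^{(i)}$ is linear in $u$ and linear in $v$ separately, so the individual-degree bound in each variable coincides with the total-degree bound of the underlying monomial. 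All conditions in \eqref{splineSpaceAm} are then met, giving $\varphi\in\A_\pd$.
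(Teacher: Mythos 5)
Your proof is correct and follows essentially the same route as the paper, which simply notes that the lemma "follows directly from the definition" because the local pulled-back space equals $\poly^2_\pd$ on triangles and contains it on quadrilaterals (via the bilinear-composition degree count you spell out). Your additional explicit checks of the smoothness conditions and of $\parDer{\bfm{n}_i}{}\varphi\on_{\bE^{(i)}}\in\poly_{\pd-1}$ are the routine details the paper leaves implicit.
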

This lemma follows directly from the definition of the space, hence the local space for triangles is equal to $\poly^{2}_{\pd}$, whereas it contains $\poly^{2}_{\pd}$ for quadrilaterals (see~\cite{KaSaTa20}). Another useful consequence of Theorem~\ref{Thm-intProblem}  is  the following: based on this theorem we define the global projection operator
\begin{equation}\label{eq:projector-global}
\proj_\pd: \C{2}\left(\overline{\Omega}\right) \to \A_\pd
\end{equation}
that assigns to every function $f \in  \C{2}\left(\overline{\Omega}\right)$ the $C^1$-spline $\proj_\pd f \in \A_\pd$ that satisfies the interpolation conditions {\bf (A)}--{\bf (D)} for data sampled from the function $f$. We can moreover define the local projection operators
\begin{equation}\label{eq:projector-local}
\proj^{(i)}_\pd: \C{2}\left(\overline{\Omega^{(i)}}\right) \to \A_\pd|_{\overline{\Omega^{(i)}}}.
\end{equation}
By definition we have
\begin{equation*}
\left(\proj_\pd \varphi\right)|_{\overline{\Omega^{(i)}}} = \proj^{(i)}_\pd \left(\varphi|_{\overline{\Omega^{(i)}}}\right).
\end{equation*}
The global and local projection operators $\proj_\pd$ and $\proj^{(i)}_\pd$, respectively, are defined by interpolation using the conditions specified in Theorem~\ref{Thm-intProblem}. In case of the global projector $\proj_\pd$ all interpolation conditions are needed. In case of the local projector $\proj^{(i)}_\pd$ only those interpolation conditions are needed, which are defined on $\overline{\Omega^{(i)}}$.

The operator $\proj_\pd$ is bounded, if all elements $\Omega^{(i)}$ of the mesh are shape regular. 
\begin{definition}\label{def:shape-regular}
 A mesh (or more precisely a sequence of refined meshes) is said to be \emph{shape regular}, with shape regularity parameter $\rho>0$, if for each triangle of the mesh all angles are bounded from below by $\rho$ and for each quadrilateral of the mesh all angles of all triangles obtained by splitting the quadrilateral along the two diagonals are bounded from below by $\rho$, see Figure~\ref{fig:SR}.
\end{definition}
\begin{figure}[htb]
\centering\footnotesize
\includegraphics[width=.4\textwidth]{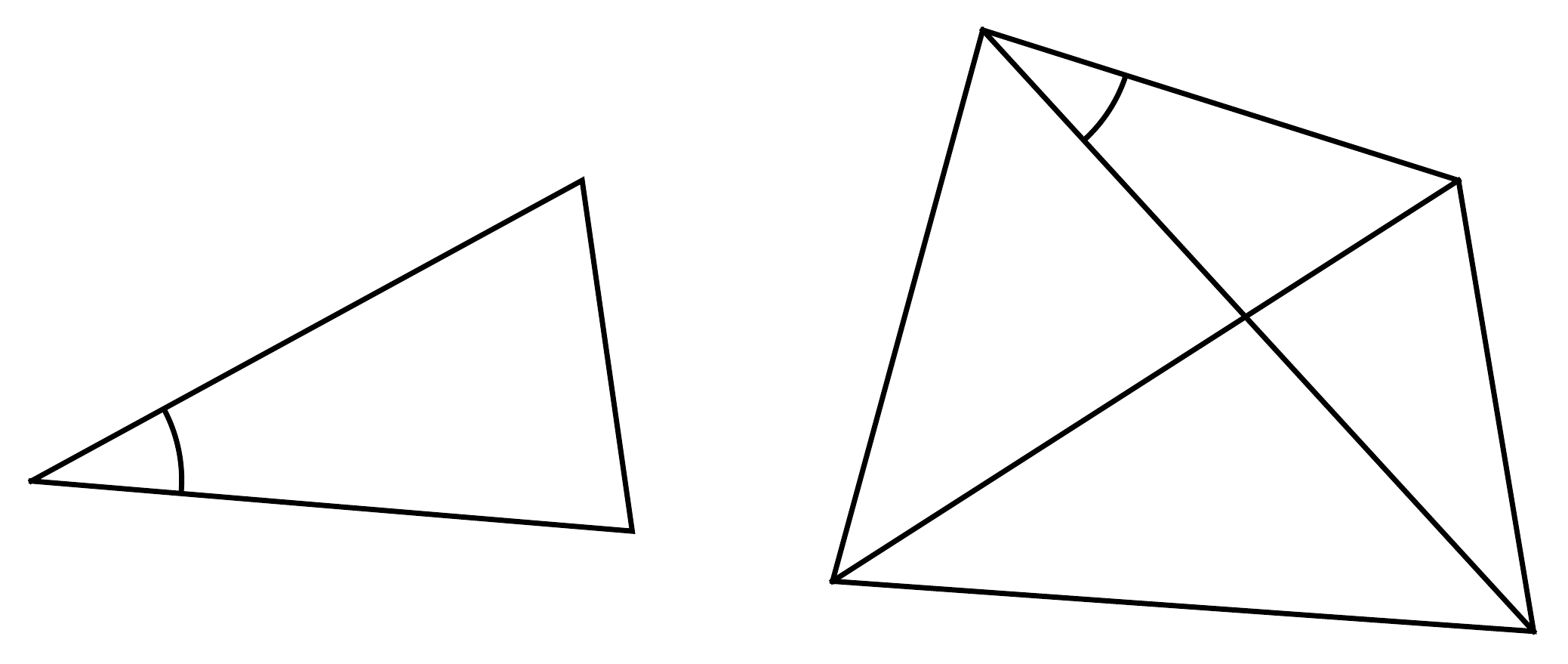}
\caption{A visualization of the shape regularity condition: the mesh is shape regular if the minimum of all angles as depicted in the two examples is uniformly bounded from below by $\rho$. The smallest angles for the triangle and quadrilateral are marked.}
\label{fig:SR}
\end{figure}

Given a shape regular mesh, there exists a constant $C(\rho)$, depending only on $\rho$, such that 
\[
 1\leq \frac{\max|\det\jac{\bfm{F}^{(i)}}|}{\min|\det\jac{\bfm{F}^{(i)}}|} \leq C(\rho)
\]
as well as 
\[
 1\leq \frac{\max_{1\leq i<j\leq\nu}|\V{i}-\V{j}|}{\min_{1\leq i<j\leq\nu}|\V{i}-\V{j}|} \leq C(\rho),
\]
where $\nu=3$ and $\Omega^{(i)} = \T(\V{1},\V{2},\V{3})$ for triangles and $\nu=4$ and $\Omega^{(i)} = \Q(\V{1},\V{2},\V{3},\V{4})$ for quadrilaterals.
In the following we denote by $|\cdot|_{H^\ell(D)}$ the $H^\ell$-seminorm, where $|\cdot|^2_{H^\ell(D)}$ is the sum of squares of $L^2$-norms of all derivatives of order $\ell$ over the domain~$D$, and by
\[
 \|\varphi\|_{H^m(D)} = \left(\sum_{\ell=0}^m |\varphi|^2_{H^\ell(D)}\right)^{\frac{1}{2}}
\]
the $H^m$-norm. We have the following lemma for shape regular elements.
\begin{lemma}\label{lem:boundedness}
Let $\Omega^{(i)}$ be any mesh element and let $\bR^{(i)}_{\ell}$, $\bS^{(i)}_{\ell}$ and $\bQ^{(i)}_{\ell,k}$ be defined by~\eqref{eq:definition-Ril-a}--\eqref{eq:definition-Qil-tri}. Then the local projector as defined in~\eqref{eq:projector-local} satisfies 
\begin{equation*}
 \| \proj^{(i)}_\pd \psi \|_{H^2(\Omega^{(i)})} \leq \sigma \|\psi \|_{C^2(\overline{\Omega^{(i)}})},
\end{equation*}
as well as 
\begin{equation*}
 \| \proj^{(i)}_\pd \psi \|_{L^\infty(\Omega^{(i)})} \leq \sigma \|\psi \|_{C^2(\overline{\Omega^{(i)}})},
\end{equation*}
where $ \sigma $ depends only on the degree~$\pd$, on the element size $\mathrm{diam}(\Omega^{(i)})$ and on the shape regularity constant $\rho$ and where $\| \psi \|_{C^2(\overline{\Omega^{(i)}})}$ takes the supremum of all derivatives up to second order on the element $\Omega^{(i)}$.
\end{lemma}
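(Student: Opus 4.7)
The plan is to express $\proj^{(i)}_\pd$ through its dual basis, estimate the interpolation functionals applied to $\psi$ by $\|\psi\|_{C^2(\overline{\Omega^{(i)}})}$, and then bound the dual basis functions uniformly by pulling back to the reference parametric domain and exploiting shape regularity. The argument has three main steps, the second of which I expect to be the main obstacle.

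First I would apply Theorem~\ref{Thm-intProblem} locally on $\Omega^{(i)}$: the interpolation functionals attached to $\overline{\Omega^{(i)}}$ --- point values, gradients and Hessians at the vertices of $\Omega^{(i)}$, values and normal derivatives at the edge interpolation points $\bR^{(i)}_\ell$, $\bS^{(i)}_\ell$, and values at the interior points $\bQ^{(i)}_{\ell,k}$ --- form a unisolvent set on the finite-dimensional space $\A_\pd|_{\overline{\Omega^{(i)}}}$. Denoting these functionals by $\{\lambda_k^{(i)}\}$ and the corresponding dual basis by $\{N_k^{(i)}\}$, we may write
\[
\proj^{(i)}_\pd \psi = \sum_k \lambda_k^{(i)}(\psi) \, N_k^{(i)} .
\]
Since each $\lambda_k^{(i)}$ evaluates $\psi$ or one of its partial derivatives of order at most two at a point of $\overline{\Omega^{(i)}}$, the bound $|\lambda_k^{(i)}(\psi)|\leq \|\psi\|_{C^2(\overline{\Omega^{(i)}})}$ is immediate, and both target inequalities reduce to uniform estimates on $\|N_k^{(i)}\|_{H^2(\Omega^{(i)})}$ and $\|N_k^{(i)}\|_{L^\infty(\Omega^{(i)})}$.

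Next I would pull each $N_k^{(i)}$ back to the reference domain $\mc{D}^{(i)}\in\{\parDomT,\parDomQ\}$ via $\hat N_k^{(i)} := N_k^{(i)}\circ \bfm{F}^{(i)}$, so that $\hat N_k^{(i)}$ lies in the fixed polynomial space $\poly^2_\pd$ or $\poly^2_{\pd,\pd}$. The defining interpolation conditions transform under the chain rule applied to $\bfm{F}^{(i)}$, the gluing-function identities of Section~\ref{sec:contCond} being used to rewrite the normal-derivative conditions in parametric coordinates. After rescaling each derivative condition of order $m$ by $\mathrm{diam}(\Omega^{(i)})^{m}$, the resulting matrix entries depend continuously on the normalised element data (edge directions and relative vertex positions), which by shape regularity range over a compact set parameterised only by $\rho$. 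Invertibility on the whole of that set follows from the pointwise unisolvence, so a compactness argument delivers $\|\hat N_k^{(i)}\|_{C^2(\mc{D}^{(i)})}\leq c(\pd,\rho)$.

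Finally, I would push the bound back to $\Omega^{(i)}$. For the $L^\infty$ estimate the pullback is an isometry, giving the $L^\infty$ inequality directly. For the $H^2$ estimate the chain rule introduces factors of $\jac{\bfm{F}^{(i)}}^{-1}$ of order $\mathrm{diam}(\Omega^{(i)})^{-1}$, while the change of variables contributes $|\det\jac{\bfm{F}^{(i)}}|$ of order $\mathrm{diam}(\Omega^{(i)})^{2}$, both controlled by $\rho$; summing over the finitely many indices $k$ (cardinality depending only on $\pd$) produces the asserted inequalities with $\sigma=\sigma(\pd,\rho,\mathrm{diam}(\Omega^{(i)}))$. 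The hard part is the compactness step in the middle: unisolvence from Theorem~\ref{Thm-intProblem} is only pointwise, so one has to carefully identify a genuinely compact parameter space of admissible shape-regular element mappings (after normalisation by $\mathrm{diam}(\Omega^{(i)})$) and invoke continuity of matrix inversion on that set to upgrade pointwise invertibility to a uniform norm bound.
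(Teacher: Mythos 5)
Your proposal is correct and follows essentially the same route as the paper: the paper likewise establishes a per-element bound from unisolvence, observes that the constant depends continuously on the vertex positions, and concludes by compactness of the set of shape-regular elements of fixed diameter with one vertex at the origin. Your dual-basis/collocation-matrix formulation simply makes explicit the continuity and pointwise-invertibility steps that the paper states more abstractly.
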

\begin{proof}
A proof of this lemma can be found in~\cite{KaSaTa20} for quadrilaterals. Here, for the sake of completeness, we repeat and extend it shortly to triangles. Assume that the degree $p$ is fixed. For each element $\Omega^{(i)}$ there exists a constant $C(\Omega^{(i)})\in \mathbb{R}$, such that 
\[
 \max\left( \| \proj^{(i)}_\pd \psi \|_{H^2(\Omega^{(i)})} , \| \proj^{(i)}_\pd \psi \|_{L^\infty(\Omega^{(i)})} \right) \leq C(\Omega^{(i)}) \|\psi \|_{C^2(\overline{\Omega^{(i)}})}.
\]
This follows directly from Theorem~\ref{Thm-intProblem} as the projector $\proj^{(i)}_\pd$ is well-defined, yields a bounded (mapped) polynomial and is completely determined by values, first and second derivatives of $\psi$ in $\overline{\Omega^{(i)}}$. Note that the definition of the projector is invariant with respect to translations of the element $\Omega^{(i)}$. Thus, the element may be moved such that one vertex is at the origin $\mathbf{O} = (0,0)^T$. We denote this translated element by $q$. The constant $C(\Omega^{(i)}) = C(q)$ depends only on the size and the shape of $q$ and not on the position of $\Omega^{(i)}$ within the mesh. Moreover, it depends continuously on the position of the vertices of $q$, since the definition of the projector depends continuously on the vertices. Since the set of all shape regular triangles and quadrilaterals $q$ which contain the origin in their boundary and have fixed size $\theta = \mathrm{diam}(q)$ is compact, the maximum of $C(q)$ over all shape regular $q$ of size $\theta$ exists and is attained for some triangle or quadrilateral $q'$, i.e.,
\[
 \max_{q \in E_{\rho,\theta}} C(q) = C(q') =: \sigma
\]
for
\[
E_{\rho,\theta}=\left\{q \subseteq \mathbb{R}^2: \; \mathbf{O} \in \partial q, \; q \mbox{ is a shape regular triangle or quadrilateral with constant }\rho\mbox{ and }\; \mathrm{diam}(q) = \theta\right\}.
\]
By construction, $\sigma$ depends only on $\rho$ and $\theta$ and on the degree $p$, which we have assumed to be fixed. This completes the proof.
\end{proof}
We can now prove approximation error bounds in standard Sobolev norms.
\begin{theorem}
 Let the mesh on $\Omega$ be shape regular and let $\Omega^{(i)}$ be an element of the mesh, let $\ell$ and $m$ be integers, with $0\leq \ell\leq 2$ and $4 \leq m \leq \pd+1$, and let the projector $\proj^{(i)}_\pd$ be defined as in~\eqref{eq:projector-local}. There exists a constant $C>0$ such that we have for all $\varphi \in H^m(\Omega^{(i)})$
 \[
  \left| \varphi - \proj^{(i)}_\pd \varphi \right|_{H^\ell(\Omega^{(i)})} \leq C \,  {h_{i}}^{m-\ell} \left| \varphi \right|_{H^m(\Omega^{(i)})},
 \]
 where $h_{i} = \mathrm{diam}(\Omega^{(i)})$. The constant $C$ depends on the shape regularity parameter $\rho$ and on $p$. We moreover have 
  \[
  \left\| \varphi - \proj^{(i)}_\pd \varphi \right\|_{L^\infty(\Omega^{(i)})} \leq C \,  {h_{i}}^{m} \left| \varphi \right|_{W^{m,\infty}(\Omega^{(i)})}
 \]
 for all $\varphi \in W^{m,\infty}(\Omega^{(i)})$, where $|\cdot|_{W^{m,\infty}(D)}$ takes the essential supremum of all derivatives of order $m$ over~$D$.
\end{theorem}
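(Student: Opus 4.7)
The estimate is of the classical Bramble--Hilbert flavour: we combine the polynomial-reproduction property of $\proj^{(i)}_\pd$ (Lemma~\ref{lem:poly-reproduction}) with its boundedness (Lemma~\ref{lem:boundedness}) and a scaling argument to read off the correct power of $h_i$.

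First I would pass to a reference frame by the affine rescaling $\mathbf{x} = h_i\hat{\mathbf{x}}$ mapping $\Omega^{(i)}$ to a shape-regular element $\hat\Omega^{(i)}$ of diameter~$1$. Setting $\hat\varphi(\hat{\mathbf{x}}) := \varphi(h_i\hat{\mathbf{x}})$, the two-dimensional scaling relations give $|\hat\varphi|_{H^k(\hat\Omega^{(i)})} = h_i^{k-1}|\varphi|_{H^k(\Omega^{(i)})}$ and $\|\hat\varphi\|_{L^\infty(\hat\Omega^{(i)})} = \|\varphi\|_{L^\infty(\Omega^{(i)})}$. Since the interpolation data in Theorem~\ref{Thm-intProblem} are purely geometric, $\proj^{(i)}_\pd$ commutes with this rescaling: on the reference element it coincides with a projector $\hat\proj_\pd$ whose constant $\hat\sigma = \hat\sigma(\rho,\pd)$ in Lemma~\ref{lem:boundedness} no longer depends on $h_i$.

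On $\hat\Omega^{(i)}$ I would invoke the Deny--Lions / Bramble--Hilbert lemma to produce an averaged Taylor polynomial $\hat q \in \poly^2_{m-1}$ with
\[
 |\hat\varphi - \hat q|_{H^k(\hat\Omega^{(i)})} \leq C\,|\hat\varphi|_{H^m(\hat\Omega^{(i)})}, \qquad 0\leq k\leq m,
\]
where $C$ depends only on $\rho$. Because $m-1\leq \pd$, Lemma~\ref{lem:poly-reproduction} gives $\hat q \in \A_\pd|_{\overline{\hat\Omega^{(i)}}}$, so $\hat\proj_\pd \hat q = \hat q$. From the decomposition $\hat\varphi - \hat\proj_\pd \hat\varphi = (\hat\varphi - \hat q) - \hat\proj_\pd(\hat\varphi - \hat q)$, the first summand is already controlled above. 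For the second, Lemma~\ref{lem:boundedness} yields
\[
 |\hat\proj_\pd(\hat\varphi - \hat q)|_{H^\ell(\hat\Omega^{(i)})} \leq \|\hat\proj_\pd(\hat\varphi - \hat q)\|_{H^2(\hat\Omega^{(i)})} \leq \hat\sigma\,\|\hat\varphi - \hat q\|_{\C{2}(\overline{\hat\Omega^{(i)}})}.
\]
Applying the Sobolev embedding $H^m(\hat\Omega^{(i)})\hookrightarrow \C{2}(\overline{\hat\Omega^{(i)}})$, which holds in $\RR^2$ precisely for $m>3$ and is the reason for the hypothesis $m\geq 4$, together with a further Bramble--Hilbert bound on $\hat\varphi - \hat q$ in the $\C{2}$ norm, gives $\|\hat\varphi - \hat q\|_{\C{2}} \leq C\,|\hat\varphi|_{H^m(\hat\Omega^{(i)})}$.

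Rescaling back by means of $|\varphi - \proj^{(i)}_\pd \varphi|_{H^\ell(\Omega^{(i)})} = h_i^{1-\ell}\,|\hat\varphi - \hat\proj_\pd \hat\varphi|_{H^\ell(\hat\Omega^{(i)})}$ and $|\hat\varphi|_{H^m(\hat\Omega^{(i)})} = h_i^{m-1}|\varphi|_{H^m(\Omega^{(i)})}$ yields the desired factor $h_i^{m-\ell}$. The $L^\infty$ estimate follows verbatim using the $L^\infty$ branch of Lemma~\ref{lem:boundedness}, the trivial embedding $W^{m,\infty}\hookrightarrow \C{2}$ for $m\geq 2$, and the $W^{m,\infty}$ Bramble--Hilbert inequality. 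The main obstacle is the bookkeeping needed to verify that $\proj^{(i)}_\pd$ truly commutes with the affine rescaling, so that the constant in Lemma~\ref{lem:boundedness} becomes independent of $h_i$ and can be absorbed into a $\rho$- and $\pd$-dependent constant; everything else is routine.
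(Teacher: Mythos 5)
Your proposal is correct and follows essentially the same route as the paper's proof: polynomial reproduction (Lemma~\ref{lem:poly-reproduction}) combined with the boundedness of the local projector (Lemma~\ref{lem:boundedness}), the Sobolev embedding $H^4\hookrightarrow C^2$ in two dimensions, a Bramble--Hilbert estimate, and a scaling/homogeneity argument to recover the power of $h_i$. The only cosmetic difference is that you carry out the rescaling explicitly at the start and work with a specific averaged Taylor polynomial, whereas the paper normalizes $h_i=1$, takes an infimum over all $\psi_p\in\poly^2_\pd$, and defers the homogeneity argument to the end.
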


\begin{proof}
The proof follows the proof of~\cite[Theorem 4.4.4]{BrSc07} and~\cite[Theorem 4.6]{KaSaTa20}. Let, for now, $h_{i} = 1$. Then we have the following
\[
 \left| \varphi - \proj^{(i)}_\pd \varphi \right|_{H^\ell(\Omega^{(i)})} \leq \left\| \varphi - \proj^{(i)}_\pd \varphi \right\|_{H^\ell(\Omega^{(i)})} \leq \left\| \varphi - \psi_p \right\|_{H^\ell(\Omega^{(i)})} + \left\| \psi_p - \proj^{(i)}_\pd \varphi \right\|_{H^\ell(\Omega^{(i)})} 
\]
for any $\psi_p \in \poly^2_\pd$. Due to Lemma~\ref{lem:poly-reproduction} we have $\psi_p - \proj^{(i)}_\pd \varphi = \proj^{(i)}_\pd (\psi_p - \varphi)$. Hence, Lemma~\ref{lem:boundedness} yields for $\ell \leq 2$
\[
 \left\| \psi_p - \proj^{(i)}_\pd \varphi \right\|_{H^\ell(\Omega^{(i)})} = \left\| \proj^{(i)}_\pd (\psi_p - \varphi) \right\|_{H^\ell(\Omega^{(i)})} \leq \sigma \|\psi_p - \varphi \|_{C^2(\overline{\Omega^{(i)}})}.
\]
A standard Sobolev inequality~\cite[Lemma 4.3.4]{BrSc07} gives the bound
\[
 \|\psi_p - \varphi \|_{C^2(\overline{\Omega^{(i)}})} \leq C_{SI} \|\psi_p - \varphi \|_{H^4({\Omega^{(i)}})},
\]
where $C_{SI}$ depends only on the shape regularity parameter $\rho$. Altogether we obtain
\[
 \left| \varphi - \proj^{(i)}_\pd \varphi \right|_{H^\ell(\Omega^{(i)})} \leq (1+\sigma C_{SI}) \inf_{\psi_p \in \poly^2_\pd} \|\psi_p - \varphi \|_{H^4({\Omega^{(i)}})} \leq C |\varphi |_{H^m({\Omega^{(i)}})},
\]
with $C=(1+\sigma C_{SI}) C_{BH}$, where the last bound, for $4\leq m\leq p+1$, comes from a Bramble-Hilbert estimate, with constant $C_{BH}$, as in~\cite[Lemma 4.3.8]{BrSc07}. The dependence on the diameter $h_i$ of the element $\Omega^{(i)}$ follows from a standard homogeneity argument. The estimate for
\[
 \left\| \varphi - \proj^{(i)}_\pd \varphi \right\|_{L^\infty(\Omega^{(i)})} 
\]
follows similar steps as the $H^\ell$-estimate and is omitted here.
\end{proof}
From this local error estimate one can easily derive the following global estimate.
\begin{corollary}
 We assume to have a shape-regular, mixed mesh on $\Omega$. Let $\ell$ and $m$ be integers, with $0\leq \ell\leq 2$ and $4 \leq m \leq \pd+1$. There exists a constant $C>0$, depending on the degree $\pd$ and on the shape regularity constant $C_{SR}$, such that we have for all $\varphi \in H^m(\Omega)$
 \[
  \left| \varphi - \proj_\pd \varphi \right|_{H^\ell(\Omega)} \leq C \,  {h}^{m-\ell} \left| \varphi \right|_{H^m(\Omega)},
 \]
 as well as for all $\varphi \in W^{m,\infty}(\Omega)$
  \[
  \left\| \varphi - \proj_\pd \varphi \right\|_{L^\infty(\Omega)} \leq C \,  {h}^{m} \left| \varphi \right|_{W^{m,\infty}(\Omega)}.
 \]
 Here the projector $\proj_\pd$ is defined as in~\eqref{eq:projector-global} and $h$ denotes the length of the longest edge of the mesh.
\end{corollary}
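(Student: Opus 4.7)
The plan is to reduce the global estimate to the local estimate just proven in the preceding theorem, exploiting the locality of the projector captured by $(\proj_\pd \varphi)|_{\overline{\Omega^{(i)}}} = \proj^{(i)}_\pd (\varphi|_{\overline{\Omega^{(i)}}})$.

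For the $H^\ell$ bound, I would first observe that, by definition of the $H^\ell$-seminorm as a sum of squares of $L^2$-norms of derivatives over $\Omega$, together with the partition of $\overline{\Omega}$ into elements $\Omega^{(i)}$, $i \in \indOmega$ (and the fact that lower-dimensional edges and vertices have zero Lebesgue measure), one has
\begin{equation*}
\left| \varphi - \proj_\pd \varphi \right|^2_{H^\ell(\Omega)} = \sum_{i \in \indOmega} \left| \varphi - \proj^{(i)}_\pd \varphi \right|^2_{H^\ell(\Omega^{(i)})}.
\end{equation*}
Applying the local estimate from the preceding theorem to each term then gives
\begin{equation*}
\left| \varphi - \proj_\pd \varphi \right|^2_{H^\ell(\Omega)} \leq \sum_{i \in \indOmega} C_i^2 \, h_i^{2(m-\ell)} \left| \varphi \right|^2_{H^m(\Omega^{(i)})}.
\end{equation*}
Since the mesh is shape regular, the local constants $C_i$ are uniformly bounded by a single constant $C$ depending only on $\rho$ and $\pd$. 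Using $h_i \leq h$ for all $i$ and $2(m-\ell)\geq 0$, I can pull the mesh size factor out of the sum, and then use additivity of $|\varphi|^2_{H^m}$ over elements once more to recover the global seminorm on the right-hand side. Taking square roots yields the desired estimate.

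For the $L^\infty$ bound the argument is even simpler: since $\Omega = \bigcup_i \overline{\Omega^{(i)}}$, one has
\begin{equation*}
\left\| \varphi - \proj_\pd \varphi \right\|_{L^\infty(\Omega)} = \max_{i \in \indOmega} \left\| \varphi - \proj^{(i)}_\pd \varphi \right\|_{L^\infty(\Omega^{(i)})},
\end{equation*}
and applying the local $L^\infty$ bound from the preceding theorem elementwise, together with $h_i \leq h$ and $|\varphi|_{W^{m,\infty}(\Omega^{(i)})} \leq |\varphi|_{W^{m,\infty}(\Omega)}$, gives the result.

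There is essentially no hard step here; the proof is routine assembly of the local bounds. The only point that requires a little care is justifying that the constant from the local theorem can be taken uniform over all elements: this hinges on the shape regularity assumption ensuring that the compact family of admissible reference elements used in Lemma~\ref{lem:boundedness} (and hence in the local theorem) yields a single worst-case constant independent of the particular element $\Omega^{(i)}$ in the mesh. Once this uniformity is noted, the global estimates follow directly by summation (for the $H^\ell$-seminorm) or by taking a maximum (for the $L^\infty$-norm).
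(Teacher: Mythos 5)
Your argument is correct and is exactly the routine element-by-element assembly the paper intends when it says the corollary "can easily be derived" from the local estimate (the paper itself gives no written proof): additivity of the squared $H^\ell$-seminorm over the elements (edges and vertices having measure zero), uniformity of the local constant via shape regularity, $h_i\leq h$, and a maximum over elements for the $L^\infty$ case. No gaps.
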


Examples of interpolants $\proj_\pd f$ together with numerical observation of the approximation order are provided in the next section. 

\section{Numerical examples}
\label{sec:examples}

This section provides some numerical examples that confirm the derived theoretical results. In particular, 
the super-smooth $\C{1}$ Argyris-like space~$\A_\pd$ is employed for three different applications on several mixed triangle and quadrilateral meshes. The first two applications are 
the interpolation of a given function, based on the projection operator $\proj_\pd$, and the $L^2$-approximation. Both applications numerically verify the optimal 
approximation properties of the Argyris-like space $\A_\pd$. Thirdly, we
solve a particular fourth order PDE given by the biharmonic equation, which requires the use of globally $\C{1}$ functions for solving the PDE via its weak form 
and Galerkin discretization. 

\subsection{Mixed meshes, refinement strategy $\&$ super-smooth $C^1$ spline spaces}

We consider the three mixed triangle and quadrilateral meshes shown in Fig.~\ref{fig:meshes_examples-Mesh1}--\ref{fig:meshes_examples-Mesh3}~(first column) denoted by Mesh~$1$--$3$. The three mixed meshes are refined 
by splitting each triangle and each quadrilateral of the corresponding mesh into four triangles and into four quadrilaterals, respectively, as visualized in 
Fig.~\ref{fig:SplittingQuadTriangle}. As an example, the resulting refined meshes for the third level of refinement (i.e. for Level~$3$) are presented in Fig.~\ref{fig:meshes_examples-Mesh1}--\ref{fig:meshes_examples-Mesh3}~(second column). We further construct for all three meshes Argyris-like spaces~$\A_\pd$ as described in the previous section for the levels of refinement~$L=0,1,\ldots 5$, and denote 
the resulting super-smooth $\C{1}$ spline spaces by $\A_{\pd,h}$, where $h=\mathcal{O}\left(2^{-L}\right)$
is the length of the longest edge in the mesh.

\begin{figure}[htb]
\centering\footnotesize
 \begin{minipage}{0.51\textwidth}
\includegraphics[width=.9\textwidth]{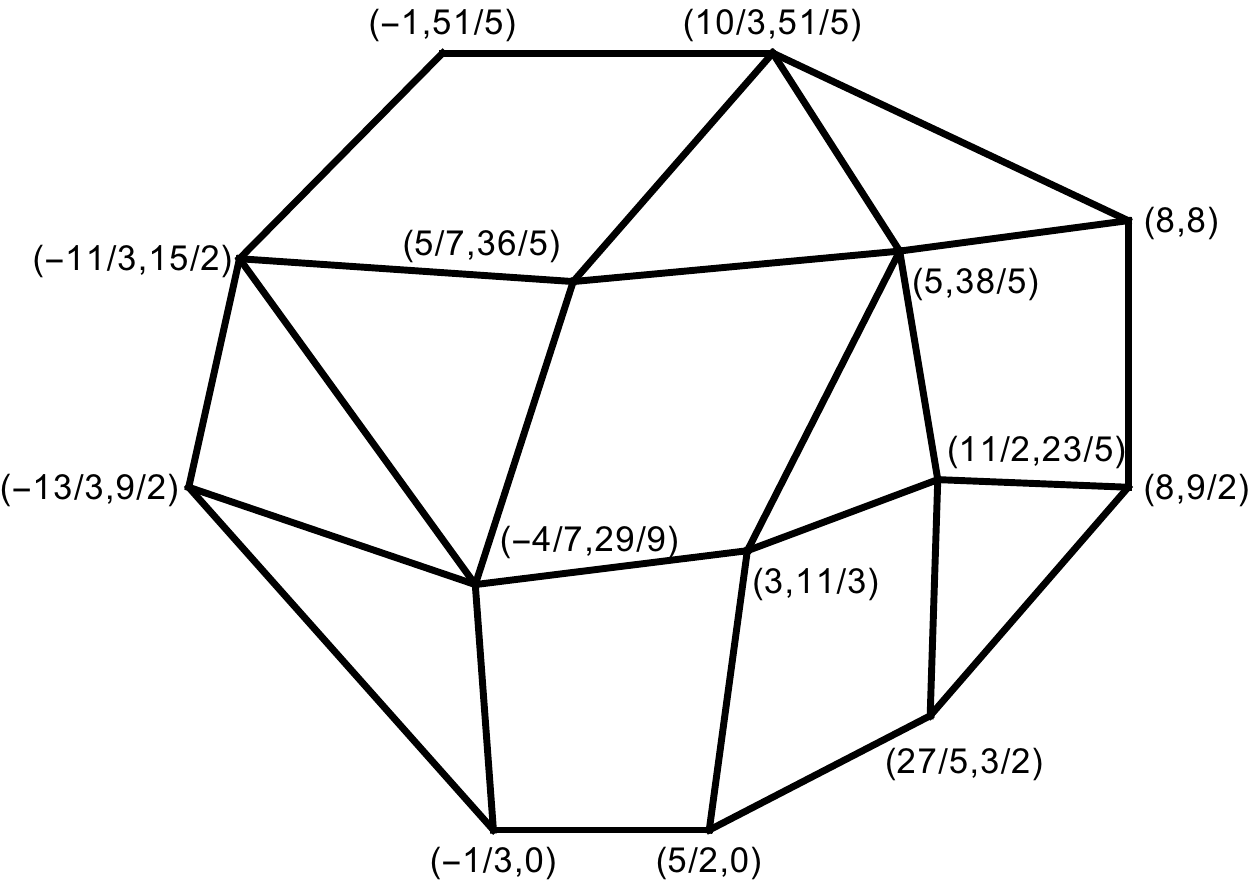}
\end{minipage}
 \begin{minipage}{0.42\textwidth}
\includegraphics[width=.9\textwidth]{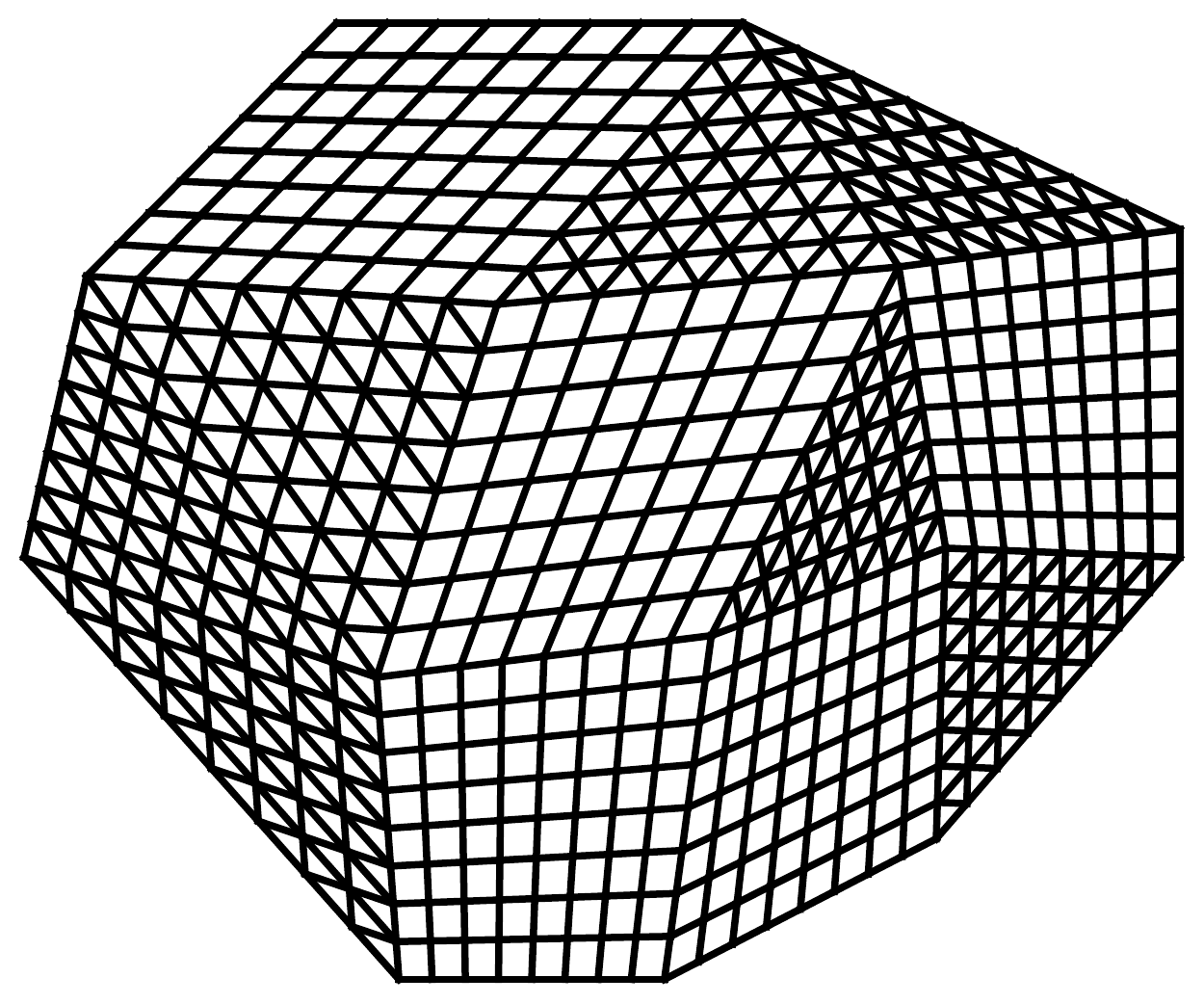}
\end{minipage}
\caption{Mesh~$1$ -- mixed triangle and quadrilateral mesh for the initial level of refinement (i.e. Level~$0$) with the coordinates of the vertices and for the third level of refinement (i.e. Level~$3$).}
\label{fig:meshes_examples-Mesh1}
\end{figure}
\begin{figure}[htb]
\centering\footnotesize
 \begin{minipage}{0.51\textwidth}
\includegraphics[width=.9\textwidth]{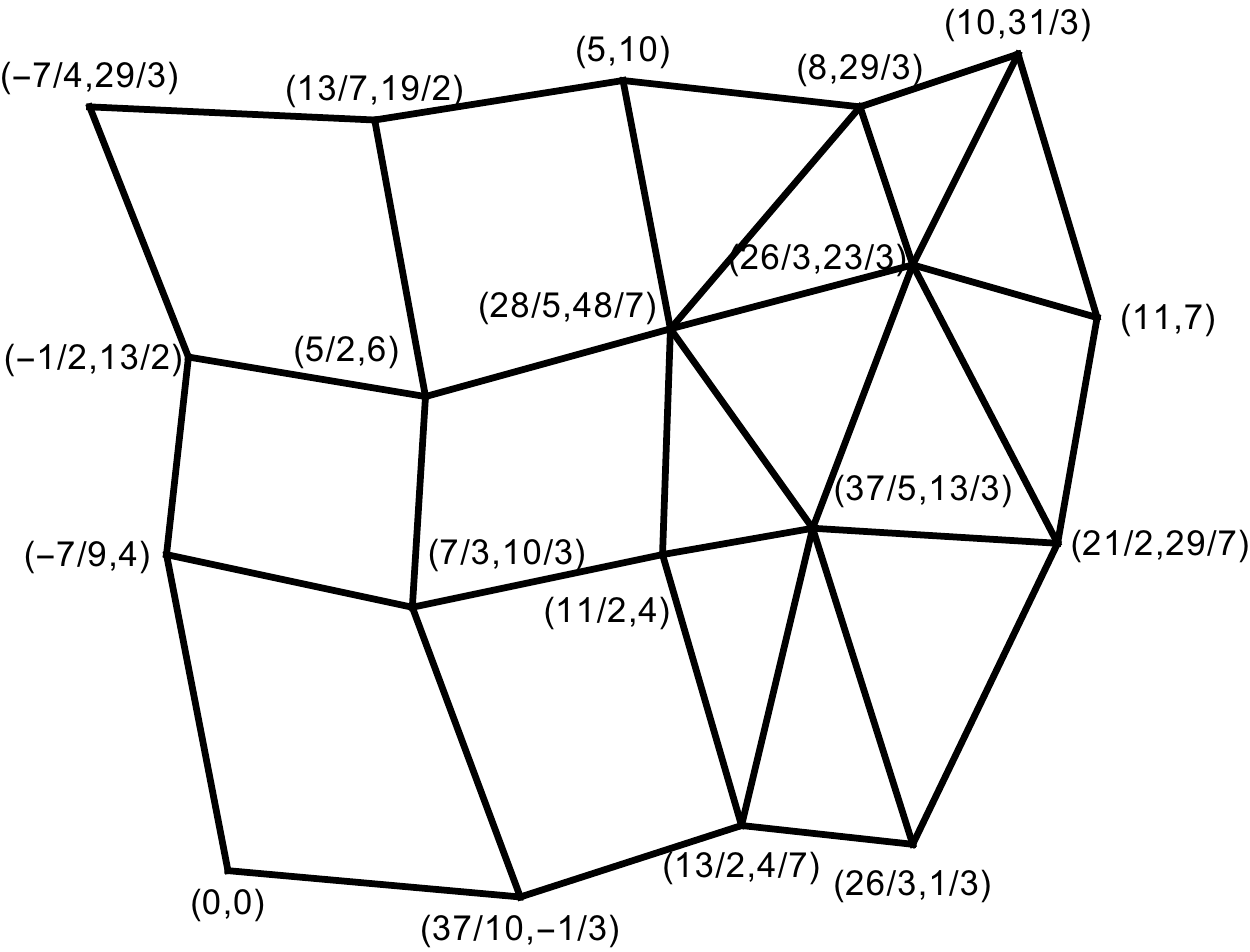}
\end{minipage}
 \begin{minipage}{0.42\textwidth}
\includegraphics[width=.9\textwidth]{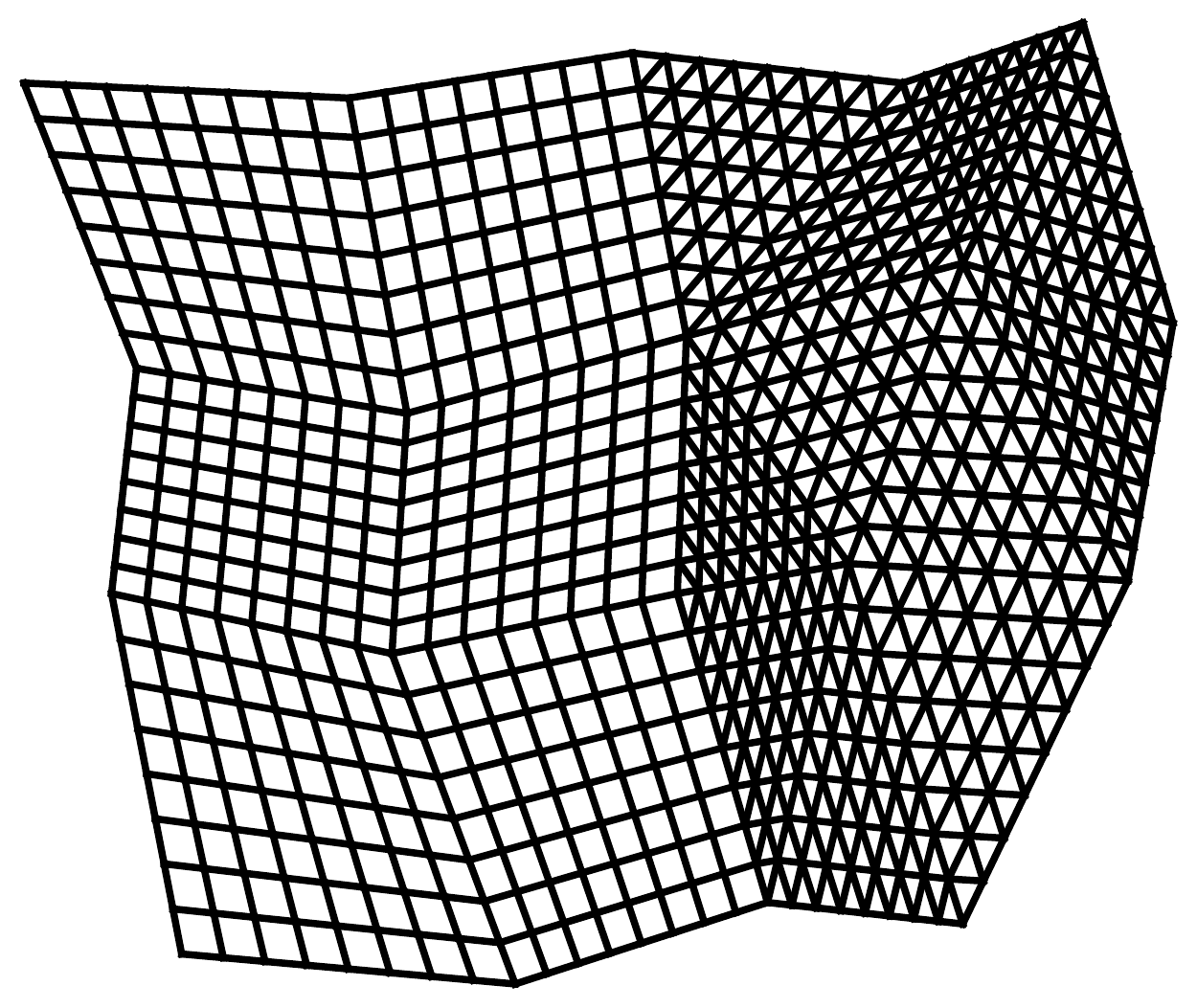}
\end{minipage}
\caption{Mesh~$2$ -- mixed triangle and quadrilateral mesh for the initial level of refinement with the coordinates of the vertices and for the third level of refinement.}
\label{fig:meshes_examples-Mesh2}
\end{figure}
\begin{figure}[htb]
\centering\footnotesize
 \begin{minipage}{0.51\textwidth}
\includegraphics[width=.9\textwidth]{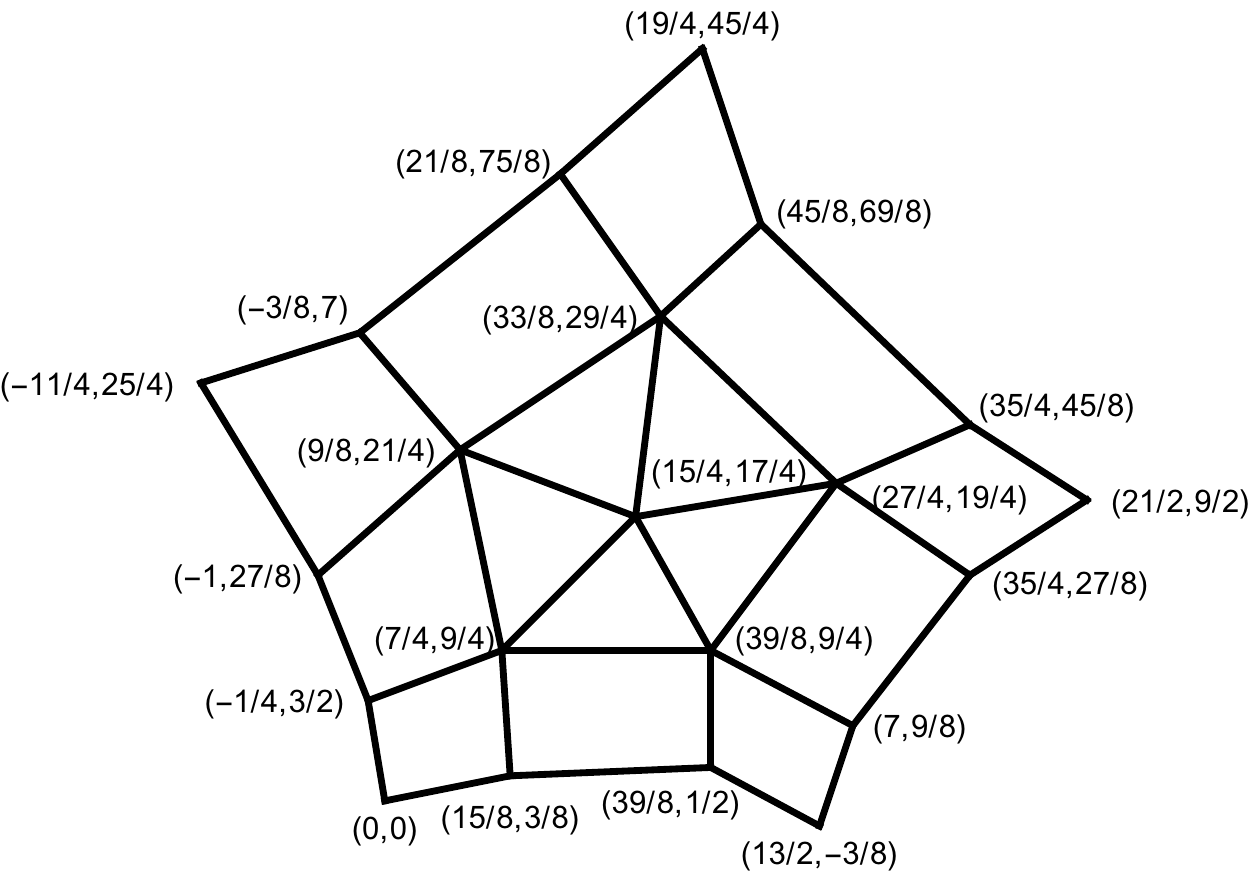}
\end{minipage}
 \begin{minipage}{0.42\textwidth}
\includegraphics[width=.9\textwidth]{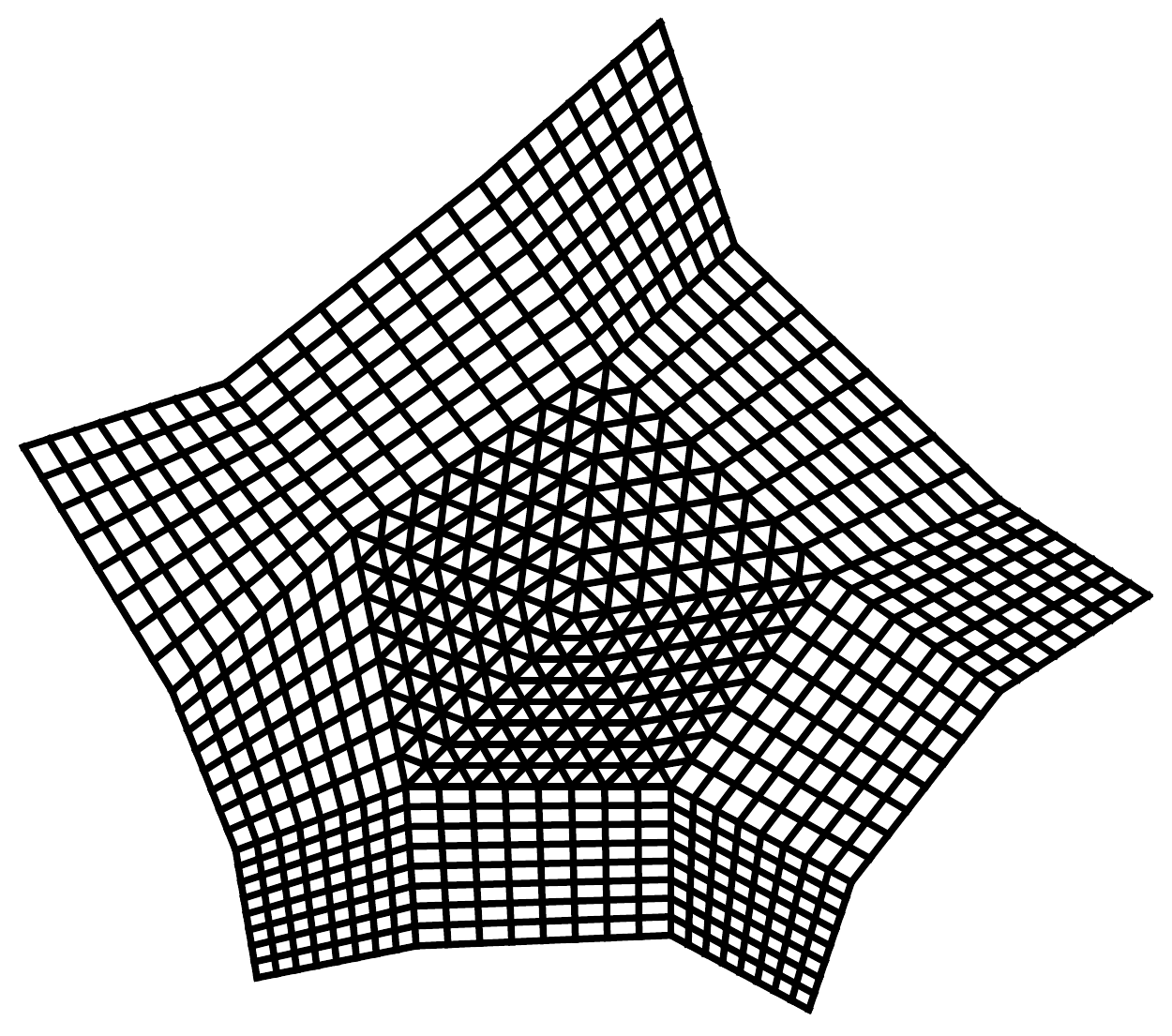}
\end{minipage}
\caption{Mesh~$3$ -- mixed triangle and quadrilateral mesh for the initial level of refinement with the coordinates of the vertices and for the third level of refinement.}
\label{fig:meshes_examples-Mesh3}
\end{figure}
\begin{figure}[htb]
 \centering\footnotesize
 \begin{tabular}{c c}
\includegraphics[width=4.5cm,clip]{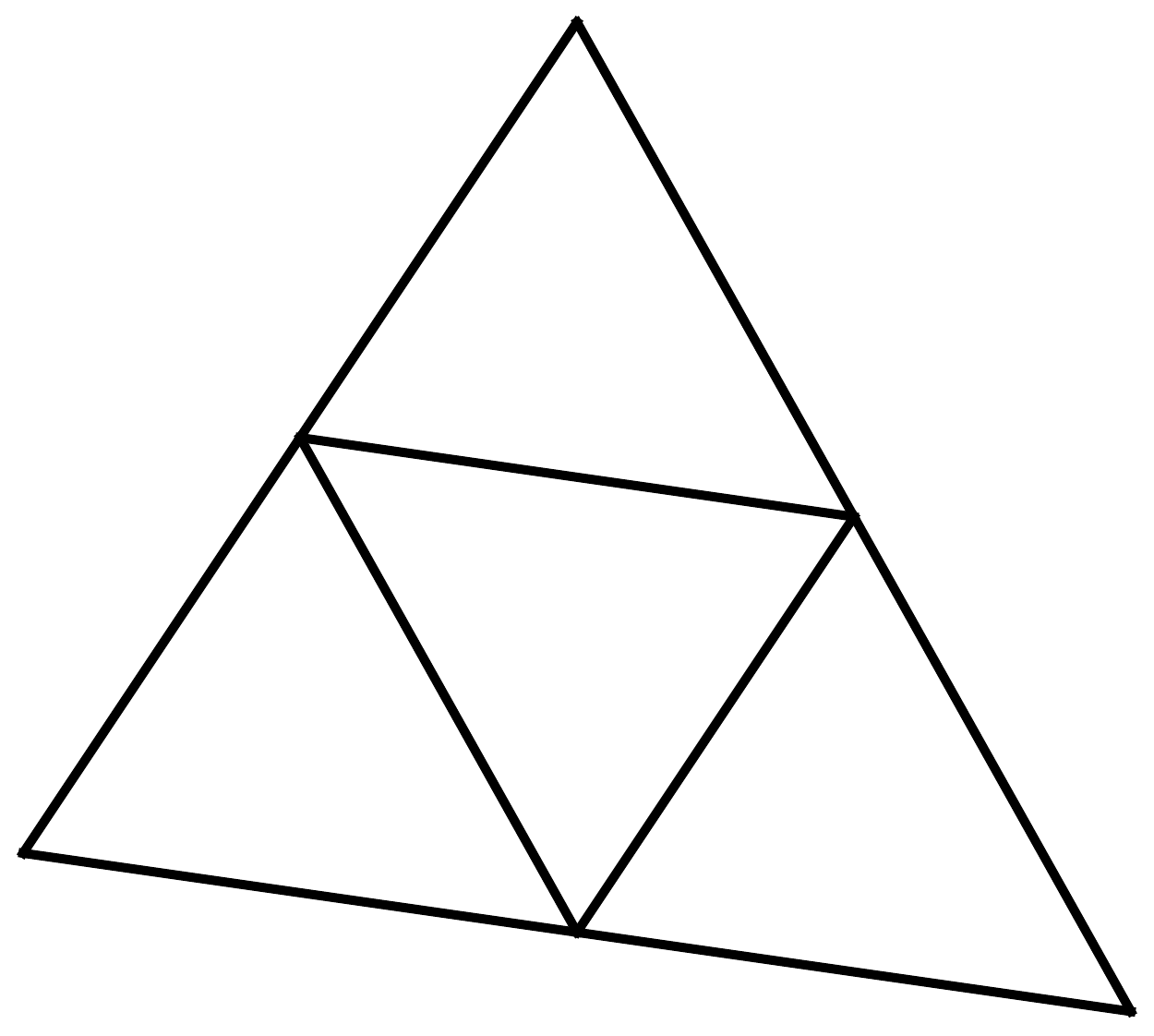} &
\includegraphics[width=4.0cm,clip]{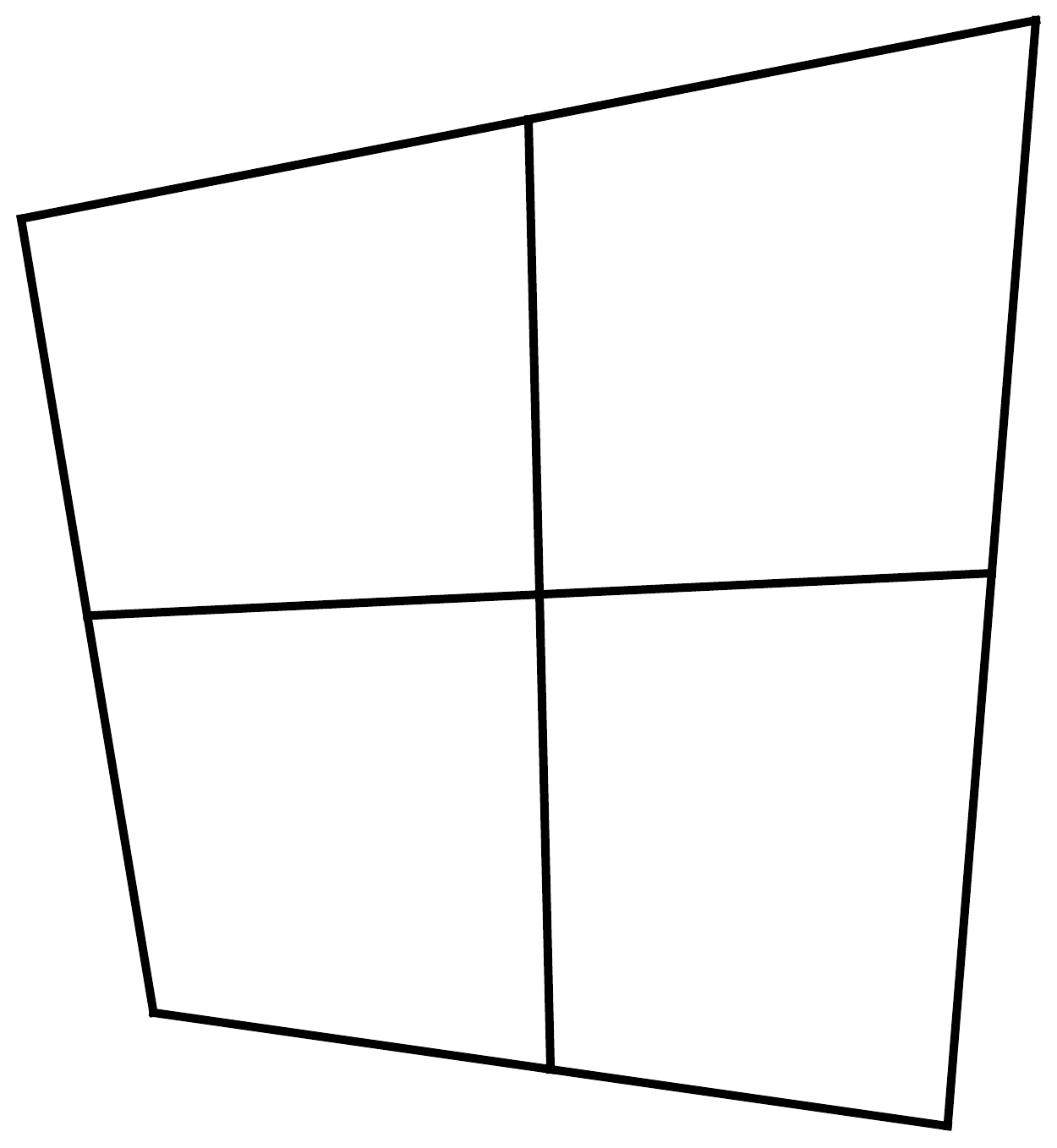}
\end{tabular}
\caption{Each triangle (left) and each quadrilateral (right) is split as shown into four triangles and into four quadrilaterals, respectively.}
\label{fig:SplittingQuadTriangle}
\end{figure}

\subsection{Interpolation}

To test the interpolation error we choose a smooth function
\begin{equation} \label{eq:exact_solutions}
 f(\bfm{x}) = f(x,y) = 4 \cos \left(\frac{2 x}{3}\right) \sin \left(\frac{2 y}{3}\right)
\end{equation}
and compute the interpolants $\proj_\pd f$ for degrees $\pd=5,6,\dots,10$ on mixed meshes  Mesh~$1$--$3$.
Fig.~\ref{fig:IntData} schematically presents the interpolation data we have used for degrees $\pd=8$ and $\pd=9$ 
when constructing interpolants over Mesh~$1$.  
\begin{figure}[htb]
 \centering\footnotesize
 \begin{minipage}{0.46\textwidth}
\includegraphics[width=0.8\textwidth]{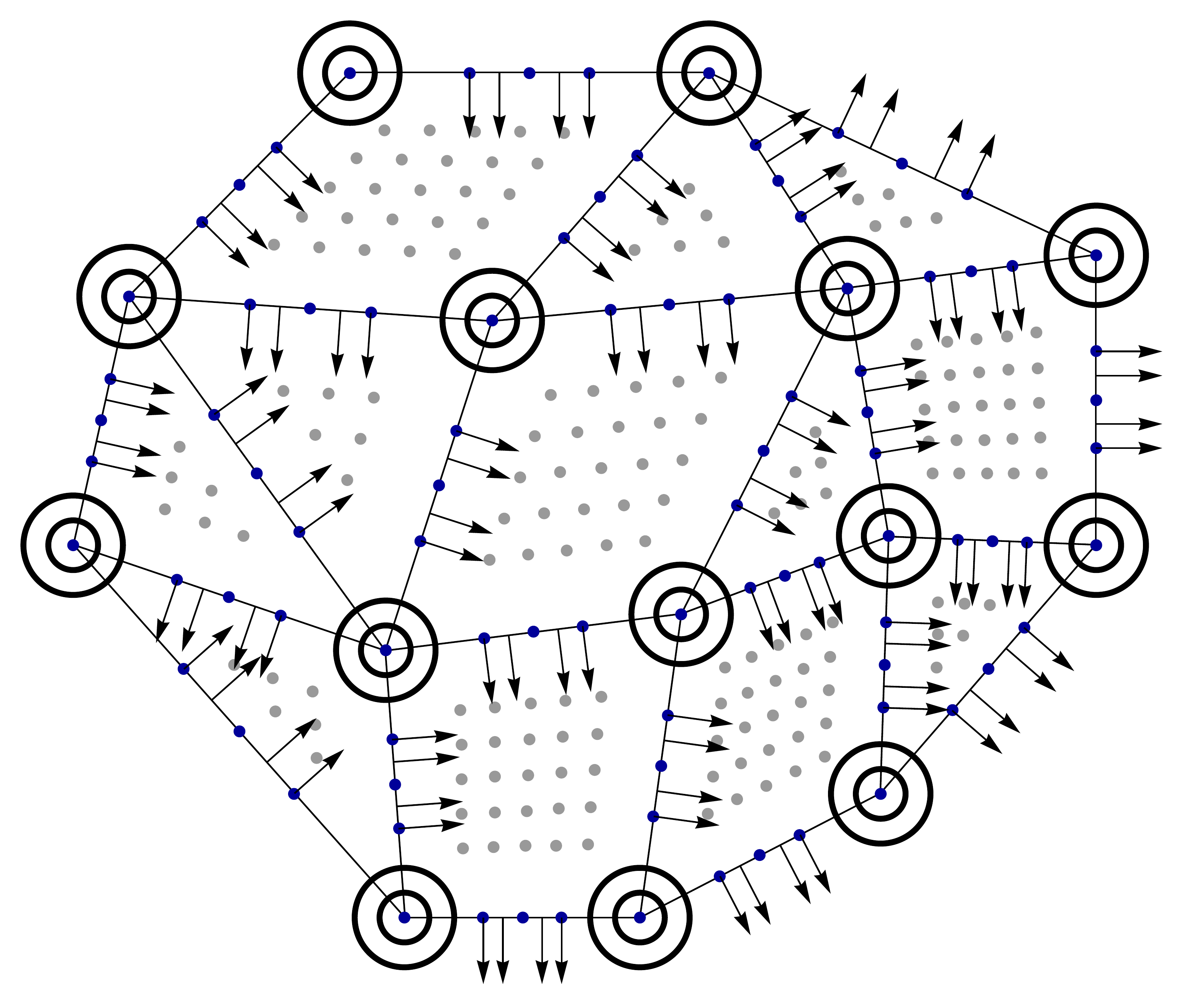}
\end{minipage}
\begin{minipage}{0.46\textwidth}
\includegraphics[width=0.8\textwidth]{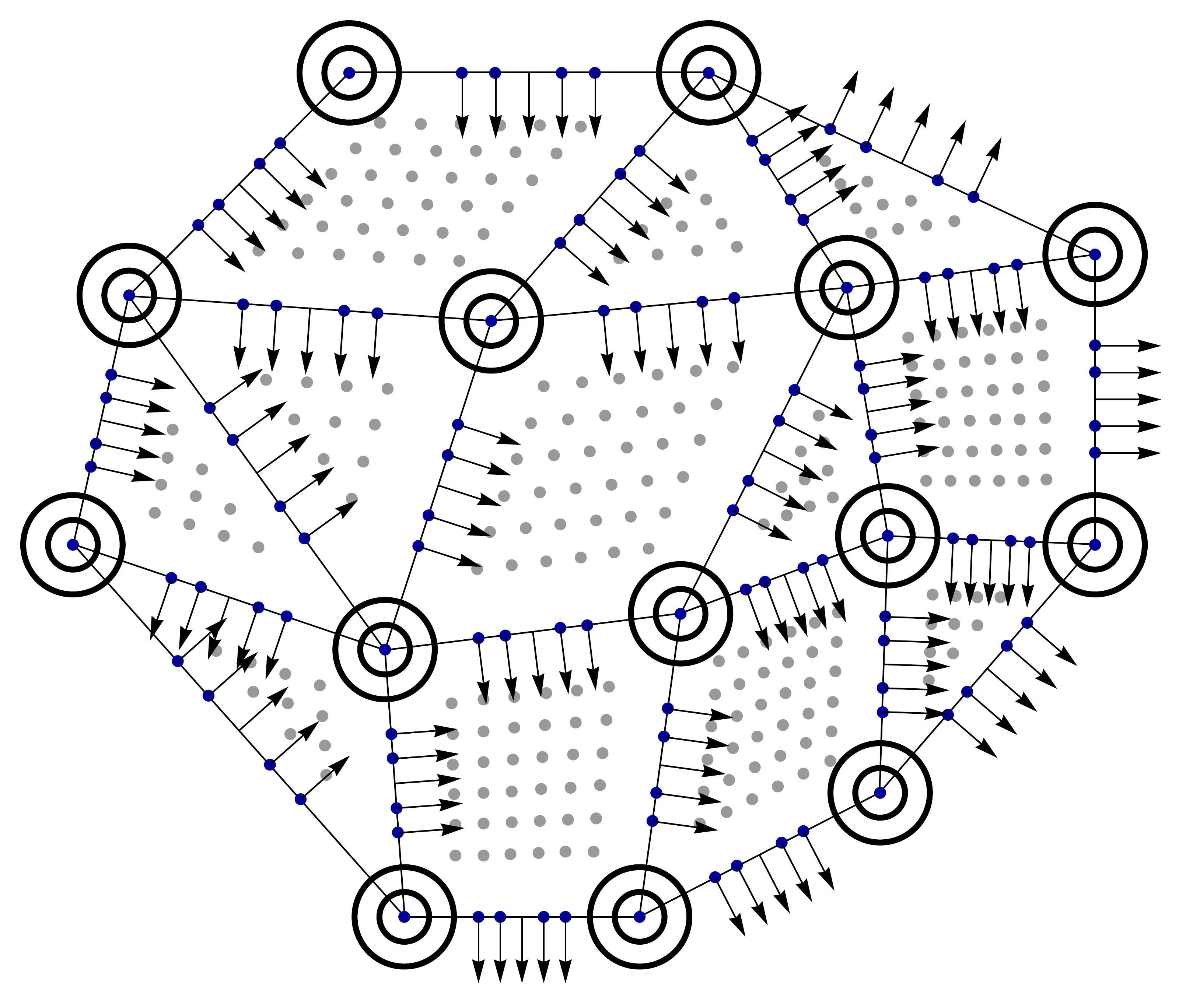}
\end{minipage}
 \caption{Schematic interpretation of interpolating data on Mesh~$1$ for degrees $\pd=8$ (left) and $\pd=9$ (right). }
\label{fig:IntData}
\end{figure}
\begin{figure}[htb]
 \centering\footnotesize
 \begin{minipage}{0.32\textwidth}
\includegraphics[width=1\textwidth]{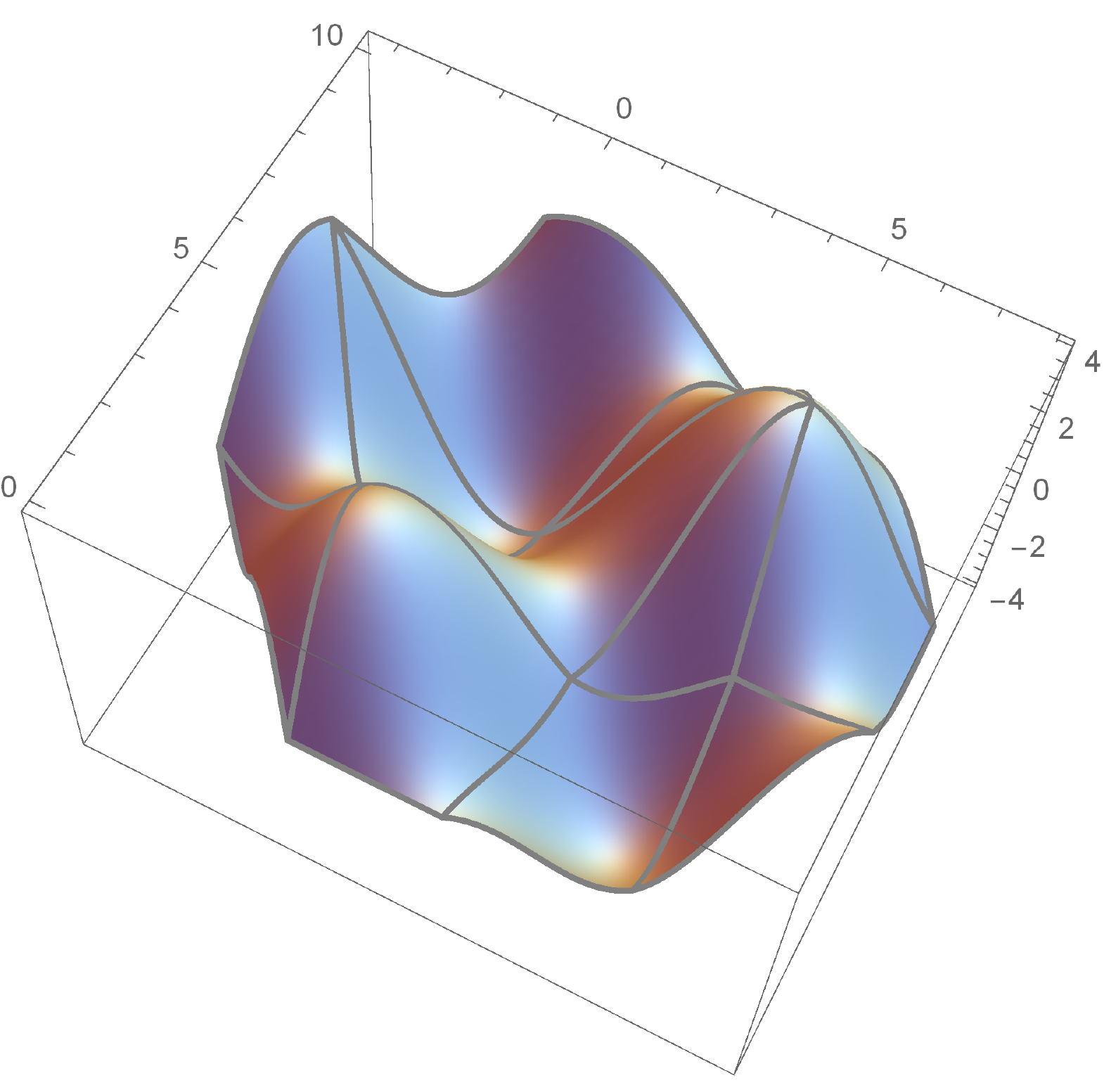}
\end{minipage}
\begin{minipage}{0.32\textwidth}
\includegraphics[width=1\textwidth]{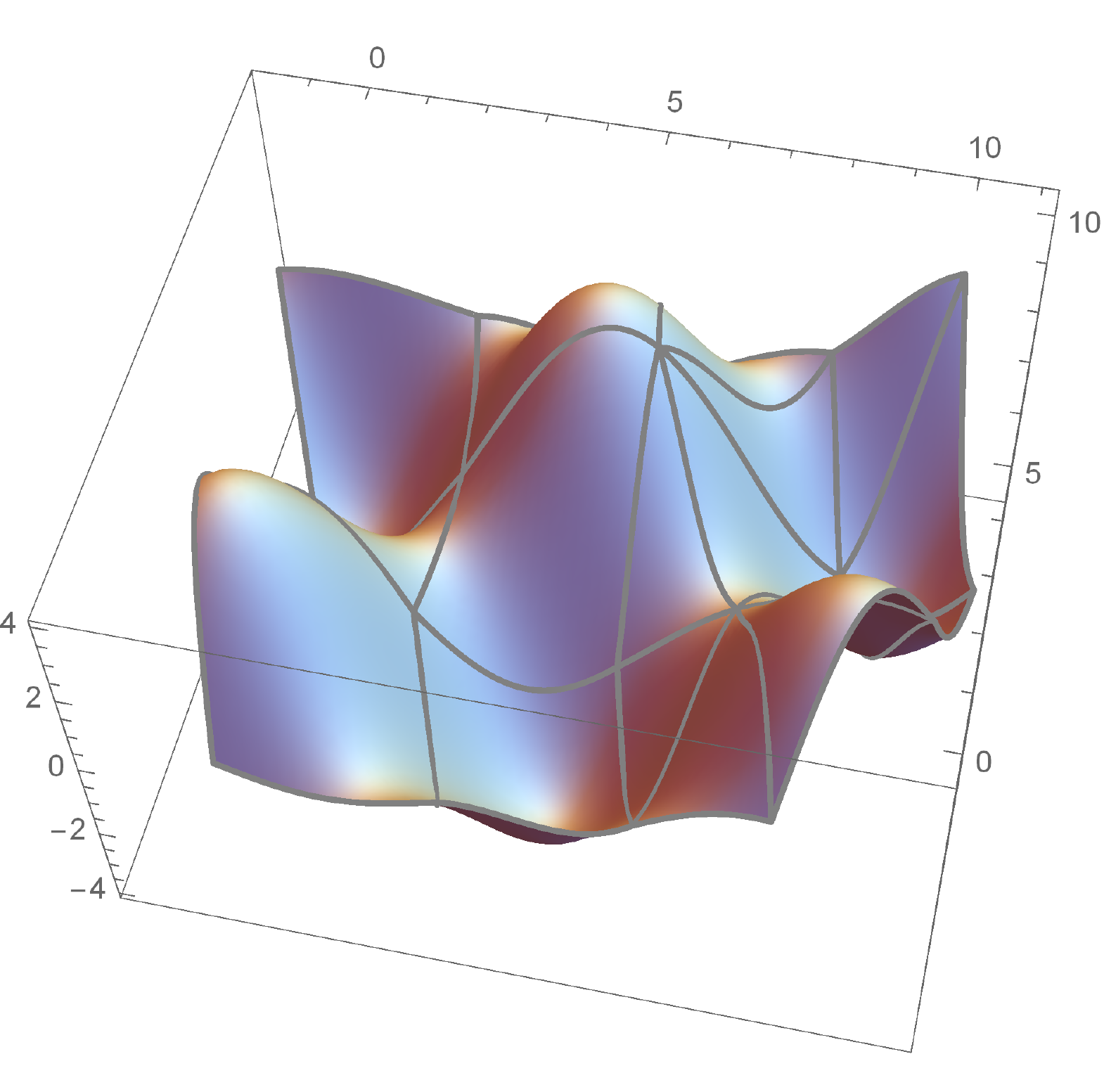}
\end{minipage}
\begin{minipage}{0.32\textwidth}
\includegraphics[width=1\textwidth]{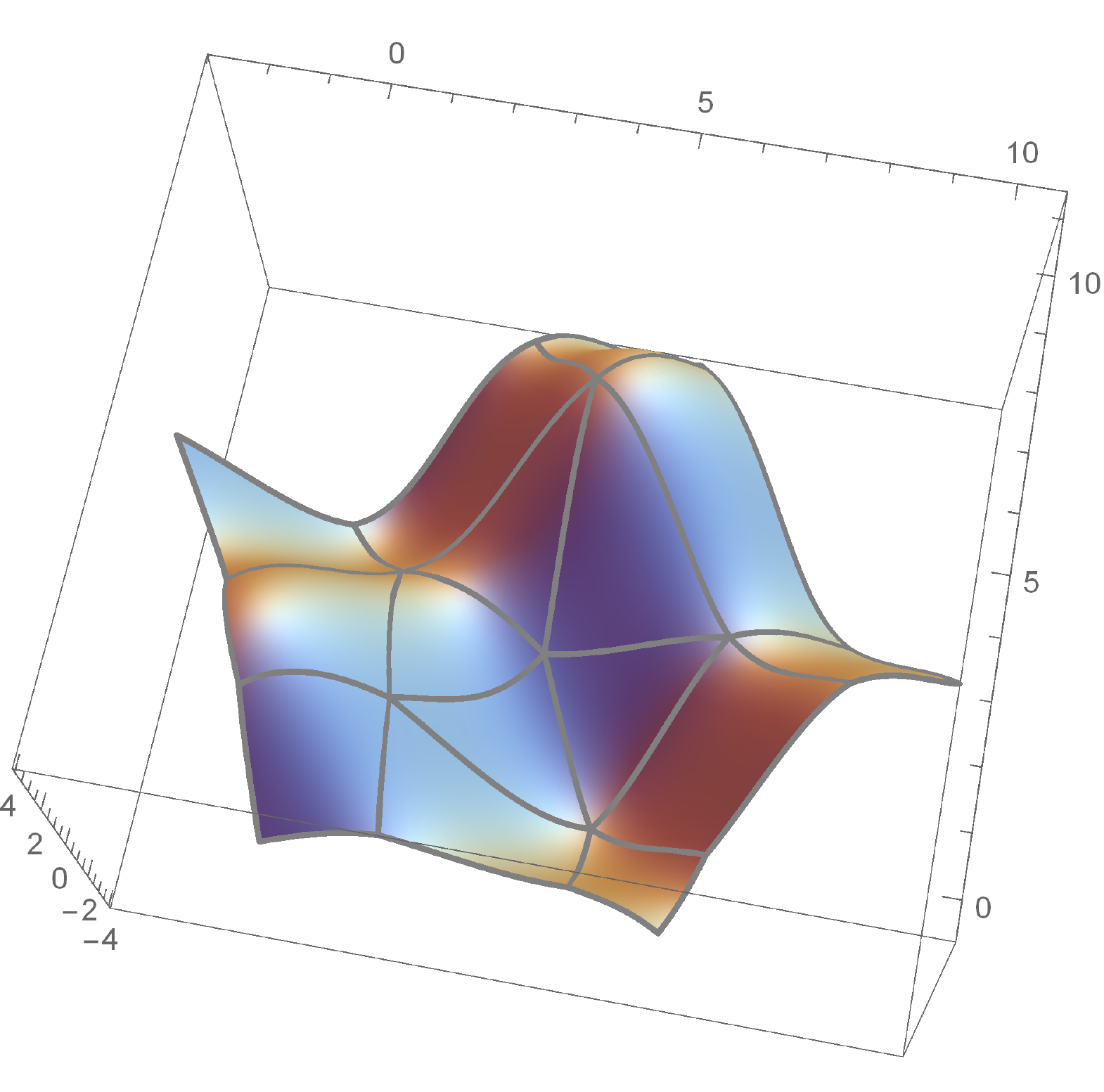}
\end{minipage}
 \caption{Interpolating splines $\proj_6 f$  for the function \eqref{eq:exact_solutions} over three meshes Mesh~$1$--$3$.}
\label{fig:IntDeg6}
\end{figure}
Interpolating splines $\proj_6 f$ over all three meshes are shown in Fig.~\ref{fig:IntDeg6}. 
To  compare between interpolants of different degrees  we use the $L^{\infty}$-error $\|f-\proj_\pd f\|_{\infty}$,
which we compute numerically by evaluating 
in $51^2=2601$ and $\binom{52}{2}=1326$ uniformly spaced points on every quadrilateral and triangle respectively. 
The errors for degrees 
$\pd=5,6,\dots, 10$ for all three meshes are given in Table~\ref{table-errorsDifferentDeg}.   
\begin{table}[htb]
\begin{center}
\begin{tabular}{|c|c|c|c|}
\hline
degree~$\pd$ & Mesh~$1$  & Mesh~$2$   & Mesh~$3$  \\
\hline
$5$ & $4.22291 \cdot10^{-1}$ & $1.01635 \cdot10^{-1}$ &  $3.17295 \cdot10^{-2}$ \\
$6$ & $9.72889\cdot10^{-2}$ & $3.98448\cdot10^{-3}$ & $1.16293\cdot10^{-2}$\\ 
$7$ & $4.65926\cdot10^{-3}$ & $6.25145\cdot10^{-4}$ & $1.91039\cdot10^{-4}$\\
$8$ & $1.99952\cdot10^{-3}$ & $3.25673\cdot10^{-5}$ & $1.17307\cdot10^{-4}$ \\
$9$ & $8.47799\cdot10^{-5}$ & $6.50782\cdot10^{-6}$ & $2.27334\cdot10^{-6}$\\
$10$ & $4.27483\cdot10^{-5}$ & $2.77481\cdot10^{-7}$ & $1.27041\cdot10^{-6}$\\
\hline
\end{tabular}
\caption{Table of errors $\|f-\proj_\pd f\|_{\infty}$ of interpolants of different degrees over meshes Mesh~$1$--$3$.}
\label{table-errorsDifferentDeg}
\end{center}
\end{table}

\begin{table}[htb]
\begin{center}
\begin{tabular}{|c||c  c|| c  c|| c c|}
\hline
 & \multicolumn{2}{c||}{$\pd=5$} & \multicolumn{2}{c||}{$\pd=6$} & \multicolumn{2}{c|}{$\pd=7$}\\
\hline
level $L$ & error $\mbox{err}_{5,L}$  &  $\gamma_{5,L}$  & error $\mbox{err}_{6,L}$  &  $\gamma_{6,L}$ & error $\mbox{err}_{7,L}$  &  $\gamma_{7,L}$   \\
\hline
level $0$ & $4.22291 \cdot 10^{-1}$ & / & $9.72889 \cdot 10^{-2}$ & / & $4.65926\cdot 10^{-3}$ & /  \\
level $1$ &  $2.00986\cdot 10^{-2}$ & $ 4.39307$ & $9.50777\cdot 10^{-4}$ & $6.67702$ & $6.53520\cdot 10^{-5}$ & $6.15573$ \\
level $2$ &  $3.40837\cdot 10^{-4}$ & $5.88187$ & $ 8.10833 \cdot 10^{-6}$ & $6.87356$ & $2.69193\cdot 10^{-7}$ & $7.92345$\\
level $3$ &  $5.44531 \cdot 10^{-6}$ &  $5.96792$ &  $6.54896 \cdot 10^{-8}$ & $6.95199$ & $1.06583 \cdot 10^{-9}$ & $7.98052$\\
\hline
\hline
& \multicolumn{2}{c||}{$\pd=8$} & \multicolumn{2}{c||}{$\pd=9$} & \multicolumn{2}{c|}{$\pd=10$}\\
\hline
level $L$ & error $\mbox{err}_{8,L}$  &  $\gamma_{8,L}$  & error $\mbox{err}_{9,L}$  &  $\gamma_{9,L}$ & error $\mbox{err}_{10,L}$  &  $\gamma_{10,L}$   \\
\hline
level $0$ & $1.99952 \cdot 10^{-3}$ &  / &  $8.47799\cdot 10^{-5}$ &  / & $4.27483\cdot 10^{-5}$ & / \\
level $1$ &  $4.54388\cdot 10^{-6}$ &  $8.78151$ & $3.68637\cdot 10^{-7}$ & $7.84538$ & $2.32038\cdot 10^{-8}$ & $10.84729$\\
level $2$ &  $9.46490\cdot 10^{-9}$ & $8.90712$ & $3.74188\cdot 10^{-10}$ & $9.94423$ & $1.19579\cdot 10^{-11}$ & $10.92218$ \\
level $3$ &  $1.89790\cdot 10^{-11}$ & $8.96204$ & $3.69027\cdot 10^{-13}$ & $9.98582$ & $5.97942\cdot 10^{-15}$ & $10.96567$\\
\hline
\end{tabular}
\caption{Table of $L^{\infty}$-errors for interpolants of different degrees $\pd$ over Mesh~$1$ with different levels of refinement $L$ together with 
estimates $\gamma_{\pd;L}$ of the decay exponent. }
\label{table-errDifferentLevels-1}
\end{center}
\end{table}
Further, to numerically observe the approximation order, we compute interpolants $\proj_\pd f$ over meshes with different refinement levels. Let us denote the 
$L^\infty$-error of the interpolant of degree $\pd$ on level $L$ by $\mbox{err}_{\pd,L}$. For Mesh $1$, these values are shown in Table~\ref{table-errDifferentLevels-1} 
together with the estimated decay exponents~$\gamma_{\pd,L}$. 
The values in Table~\ref{table-errDifferentLevels-1} numerically confirm that the approximation order for splines of degree $\pd$ is optimal, i.e. $\pd+1$.  Similar results 
hold true for the other two example meshes as one can see in Fig.~\ref{fig:DecayExp}, where the errors of interpolants of different degrees over different refinement levels 
are plotted in log--log-scale in dependence on the number of degrees of freedom.
\begin{figure}[htb]
 \centering\footnotesize
\begin{minipage}{0.3\textwidth}
\includegraphics[width=1\textwidth]{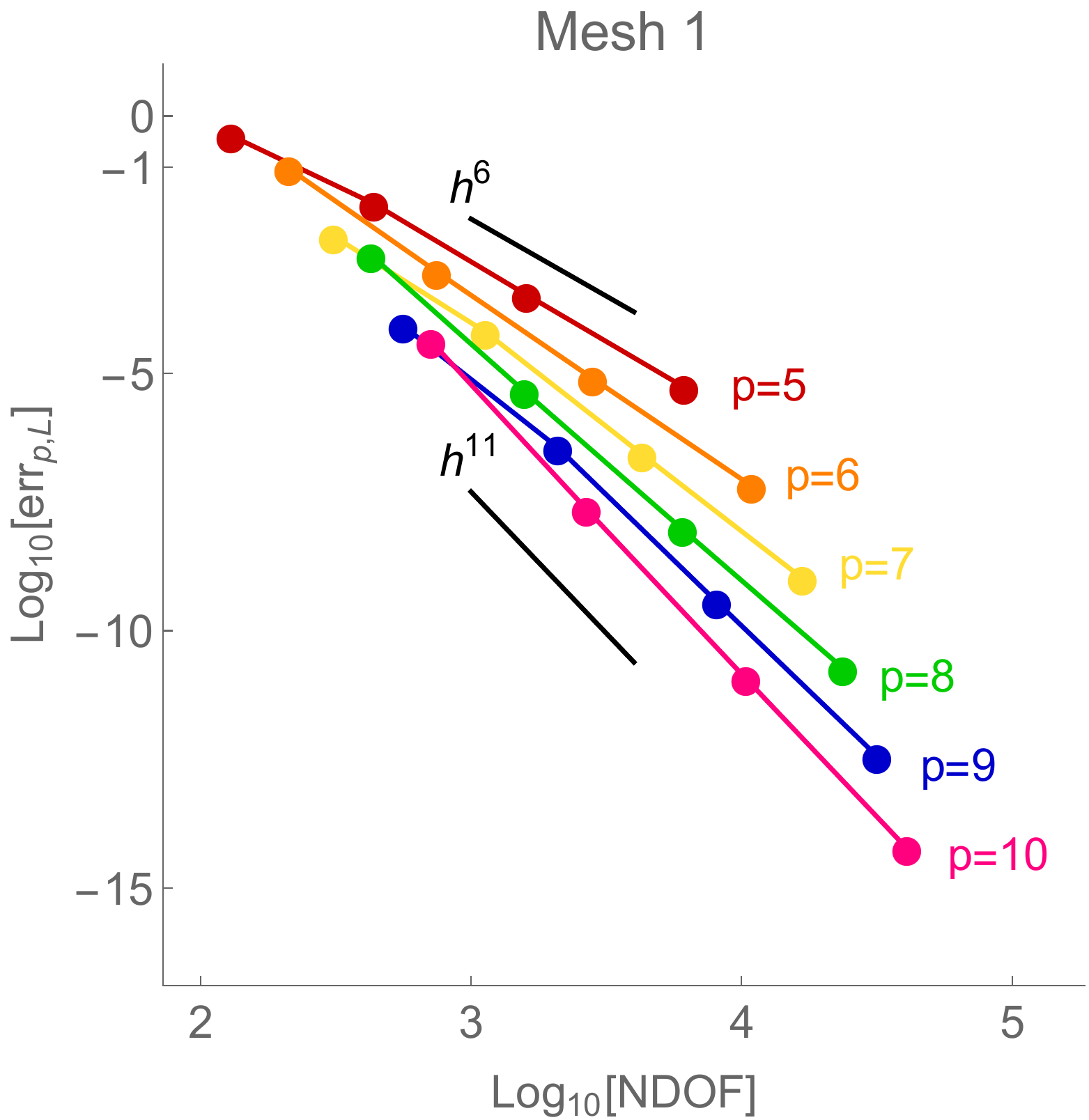}
\end{minipage}
\begin{minipage}{0.3\textwidth}
\includegraphics[width=1\textwidth]{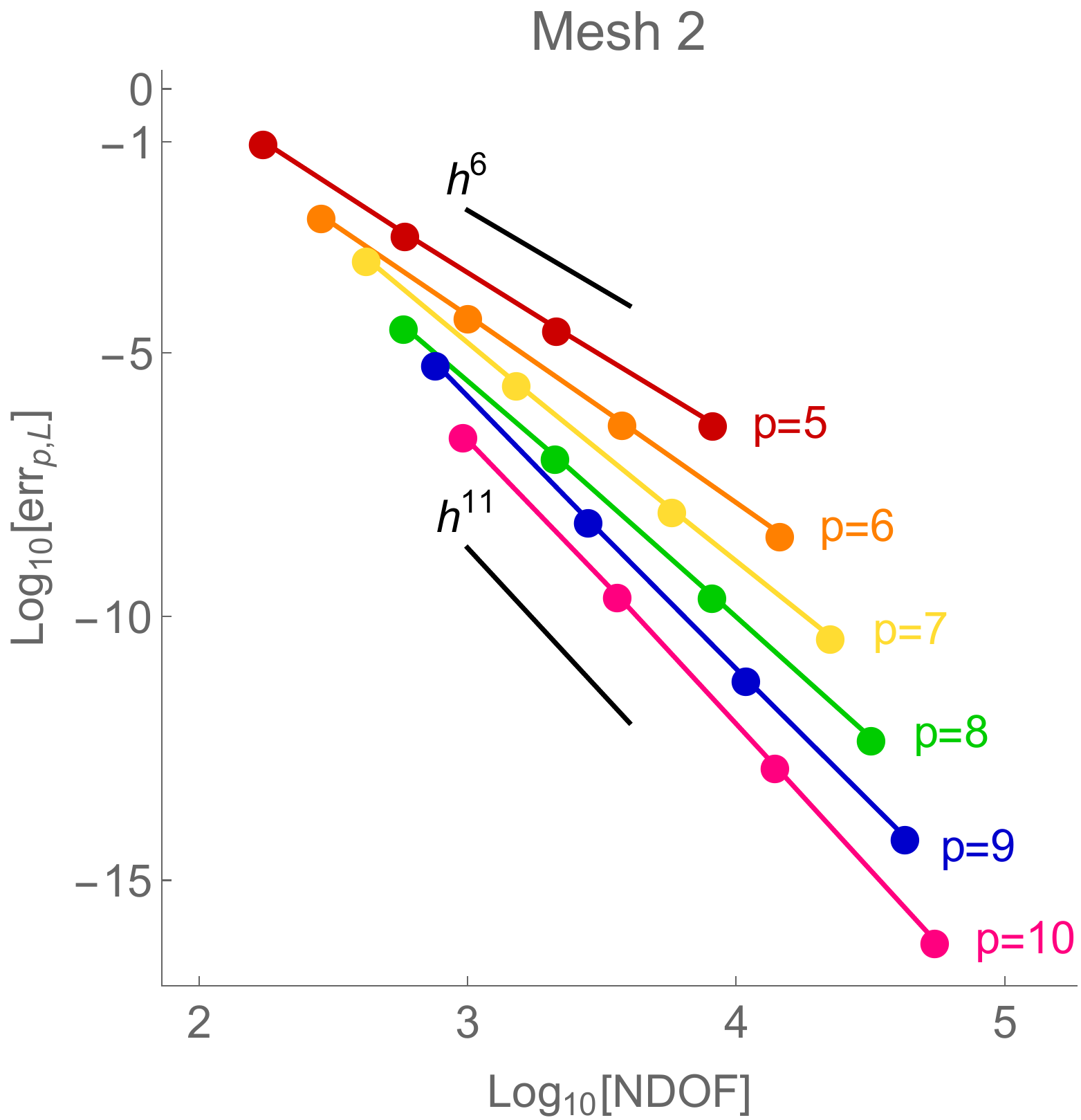}
\end{minipage}
\begin{minipage}{0.3\textwidth}
\includegraphics[width=1\textwidth]{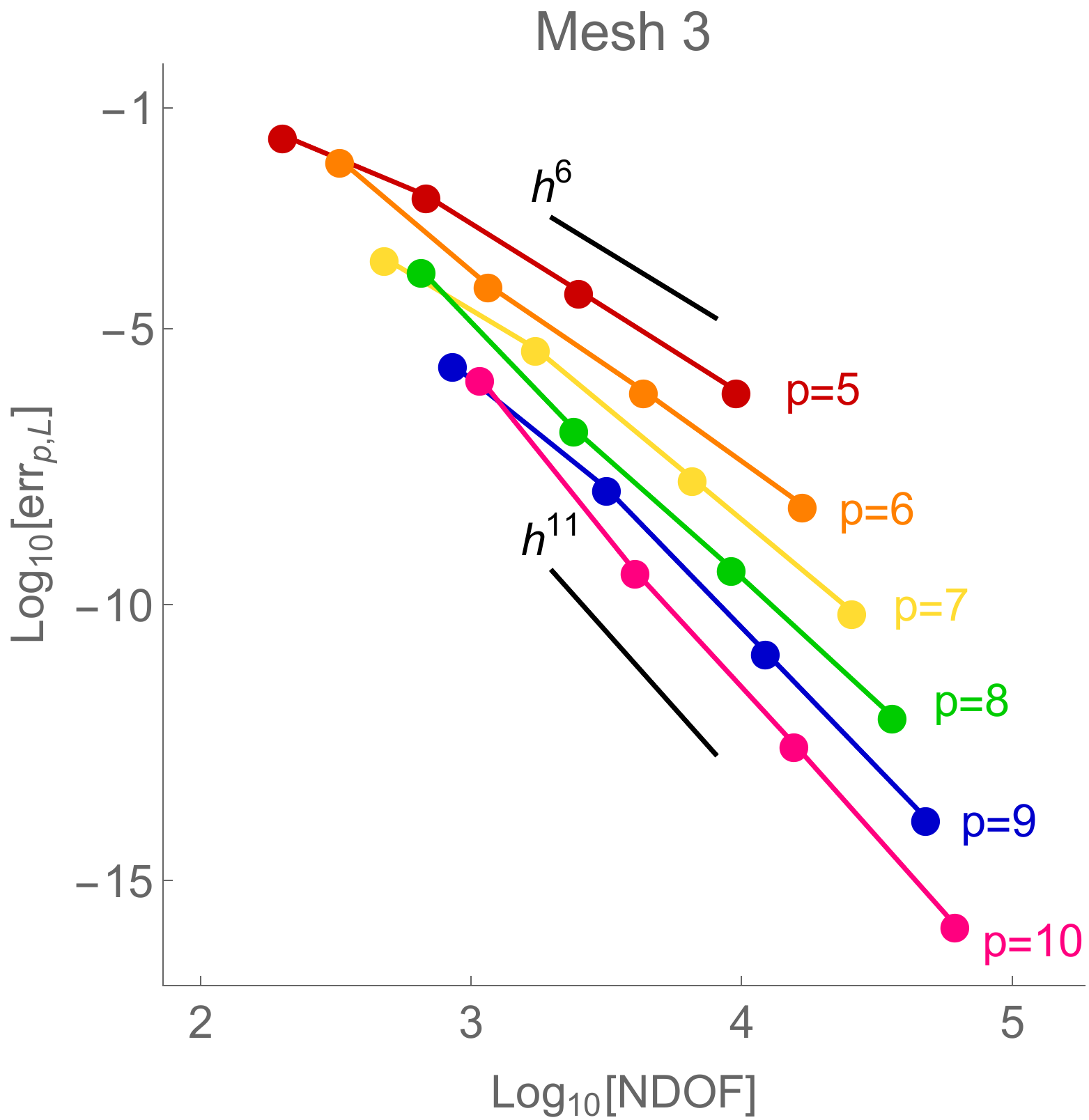}
\end{minipage}
 \caption{$L^{\infty}$-errors of interpolants of different degrees $\pd$ over meshes Mesh~$1$--$3$ with different refinement levels. Errors are shown in log--log-scale in dependence on the number of degrees of freedom.}
\label{fig:DecayExp}
\end{figure}

\subsection{$L^2$-approximation}

We use the constructed super-smooth $\C{1}$ spline spaces~$\A_{5,h}$ to approximate in a least-squares sense the function \eqref{eq:exact_solutions} on meshes Mesh~$1$--$3$. 
That is, we compute in each case that function~$f_h \in \A_{5,h}$, 
which minimizes the objective function
\[
 \int_{\Omega} \left(f_h(\bfm{x})- f(\bfm{x})\right)^2 d \bfm{x}.
\]
Fig.~\ref{fig:examples} (second row) reports the resulting $L^{\infty}$-errors as well as the resulting $L^2$-errors for different levels of refinement with respect to 
the number of degrees of freedom (NDOF). The obtained results indicate that both errors decrease with rates of optimal order of $\mathcal{O}(h^6)$, which numerically 
verify the optimal approximation power of the super-smooth $\C{1}$ Argyris-like space~$\mathcal{A}_{5}$.

\subsection{Solving the biharmonic equation}

We solve a particular fourth order PDE, namely the biharmonic equation
\begin{equation} \label{eq:problem_biharmonic}
\left\{
 \begin{array}{rll} 
 \Delta^{2} u(\bfm{x}) & = g(\bfm{x}) & \bfm{x} \in \Omega \\
  u(\bfm{x}) & = g_1(\bfm{x})  & \bfm{x} \in \partial \Omega \\
  \frac{\partial u}{\partial \bfm{n}}(\bfm{x})  & = g_2(\bfm{x}) &  \bfm{x} \in \partial \Omega
        \end{array} \right. 
\end{equation}
on the Meshes~$1$--$3$ by employing the super-smooth $\C{1}$ spline spaces~$\A_{5,h}$ as discretization spaces. For all three meshes, 
the functions $g$, $g_1$ and $g_2$ are derived from the same exact solution~\eqref{eq:exact_solutions} 
as before for the case of $L^2$-approximation. The biharmonic equation~\eqref{eq:problem_biharmonic} is solved via its weak form and Galerkin projection by at 
first strongly imposing the Dirichlet boundary conditions to the numerical solution~$u_h \in \A_{5,h}$. 
Let $\widehat{\A}_{5,h}$ and 
$\widetilde{\A}_{5,h}$ be the two isogeometric spline spaces defined via
\[
 \widehat{\A}_{5,h} = \defset{\varphi \in \A_{5,h}}{\varphi(\bfm{x})=\frac{\partial \varphi}{\partial \bfm{n}}(\bfm{x})=0, \mbox{ }\bfm{x} \in \partial \Omega} 
\]
and
\[
 \A_{5,h}=\widehat{\A}_{5,h} \oplus \widetilde{\A}_{5,h},
\] 
respectively. 
We aim at finding for the biharmonic equation~\eqref{eq:problem_biharmonic} an approximated solution~$u_h = \widehat{u}_h+\widetilde{u}_h$ 
with $\widehat{u}_h \in \widehat{\A}_{5,h}$ and $\widetilde{u}_h \in \widetilde{\A}_{5,h}$, where $\widetilde{u}_h$ is at first computed via $L^2$-projection 
to approximately satisfy the Dirichlet boundary conditions, and is then used to find~$\widehat{u}_h$ by solving the problem
\begin{equation} \label{eq:weak_problem}
 \int_{\Omega} \Delta \widehat{u}_h(\bfm{x}) \Delta \widehat{v}_h(\bfm{x}) \mathrm{d}\bfm{x} = 
 \int_{\Omega} g(\bfm{x}) \widehat{v}_h(\bfm{x}) \mathrm{d}\bfm{x} -
 \int_{\Omega} \Delta \widetilde{u}_h(\bfm{x}) \Delta \widehat{v}_h(\bfm{x}) \mathrm{d}\bfm{x}
\end{equation}
for all~$\widehat{v}_h \in \widehat{\A}_{5,h}$. 
An isogeometric formulation of the problem~\eqref{eq:weak_problem} can be found e.g. 
in \cite{BaDeQu15, KaViJu15}.

The resulting relative $L^2$-, $H^1$- and $H^2$-errors for the different levels of refinement, again with respect to the number 
of degrees of freedom, are shown in Fig.~\ref{fig:examples} (third row). The estimated convergence rates are for all examples of optimal order of $\mathcal{O}(h^6)$, $\mathcal{O}(h^5)$ 
and $\mathcal{O}(h^4)$ with respect to the $L^2$-, $H^1$- and $H^2$-norm, respectively.

\begin{figure}[htb]
\centering\footnotesize
\begin{tabular}{ccc}
\includegraphics[width=5.0cm,clip]{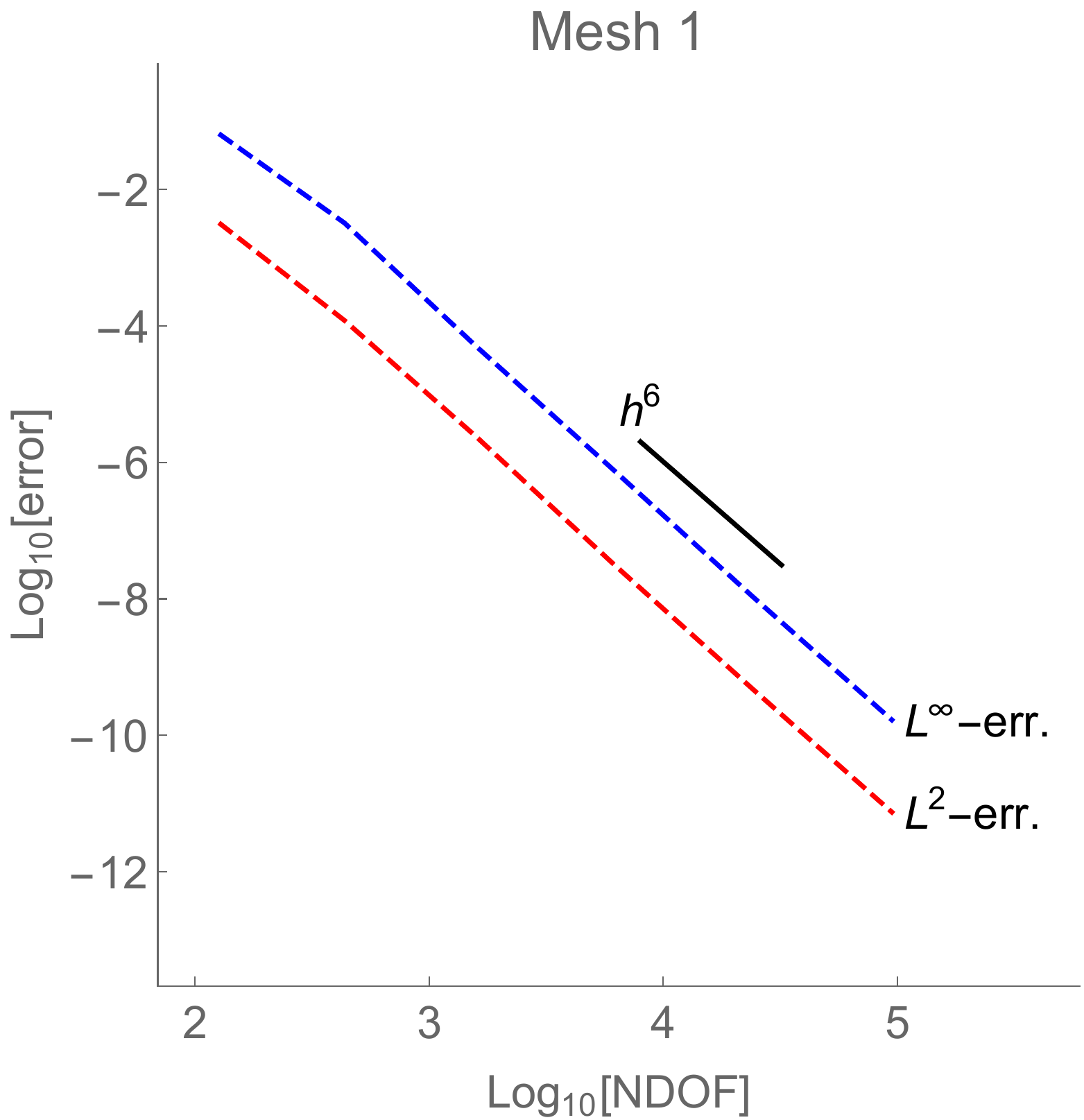} &
\includegraphics[width=5.0cm,clip]{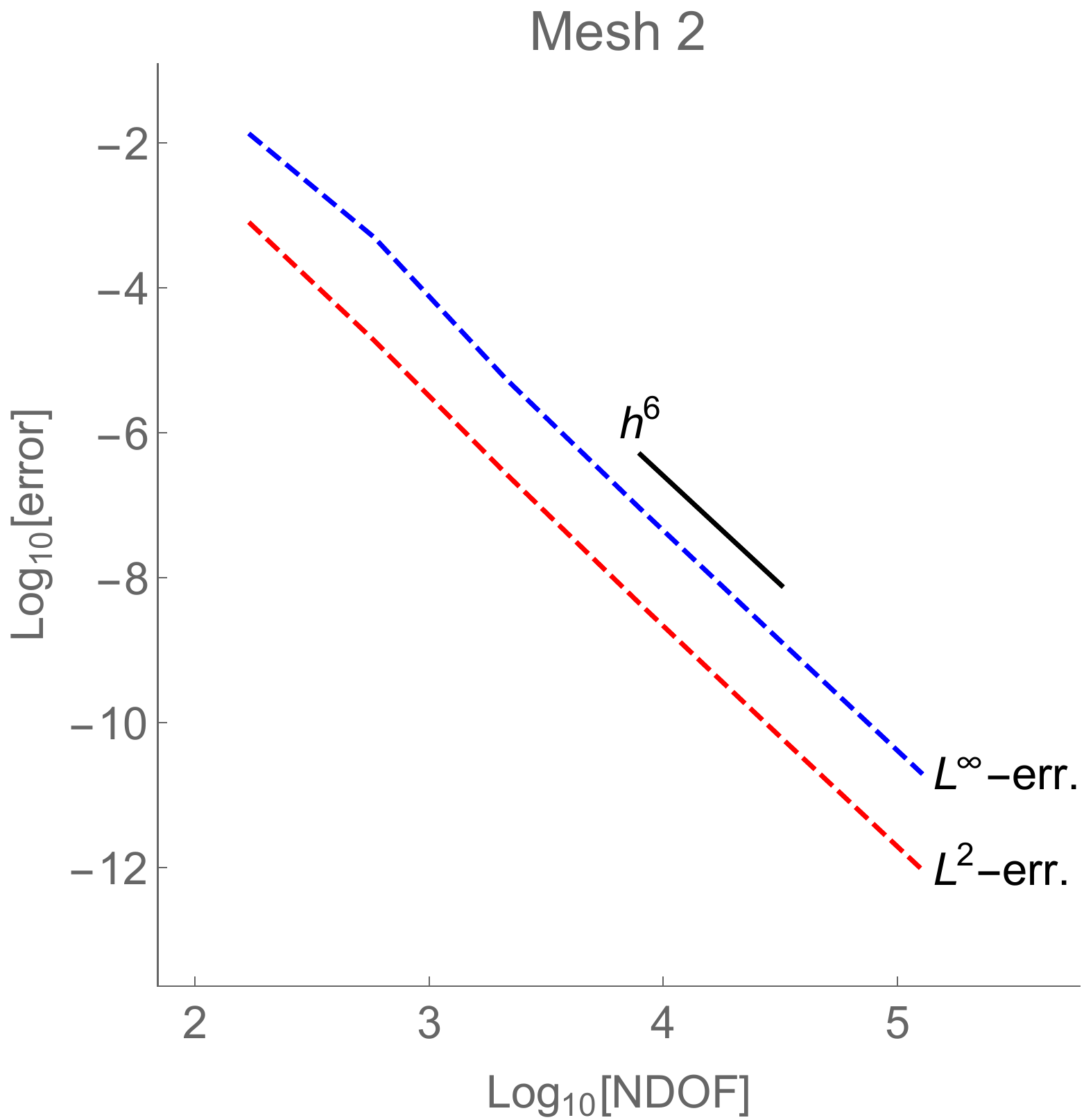} &
\includegraphics[width=5.0cm,clip]{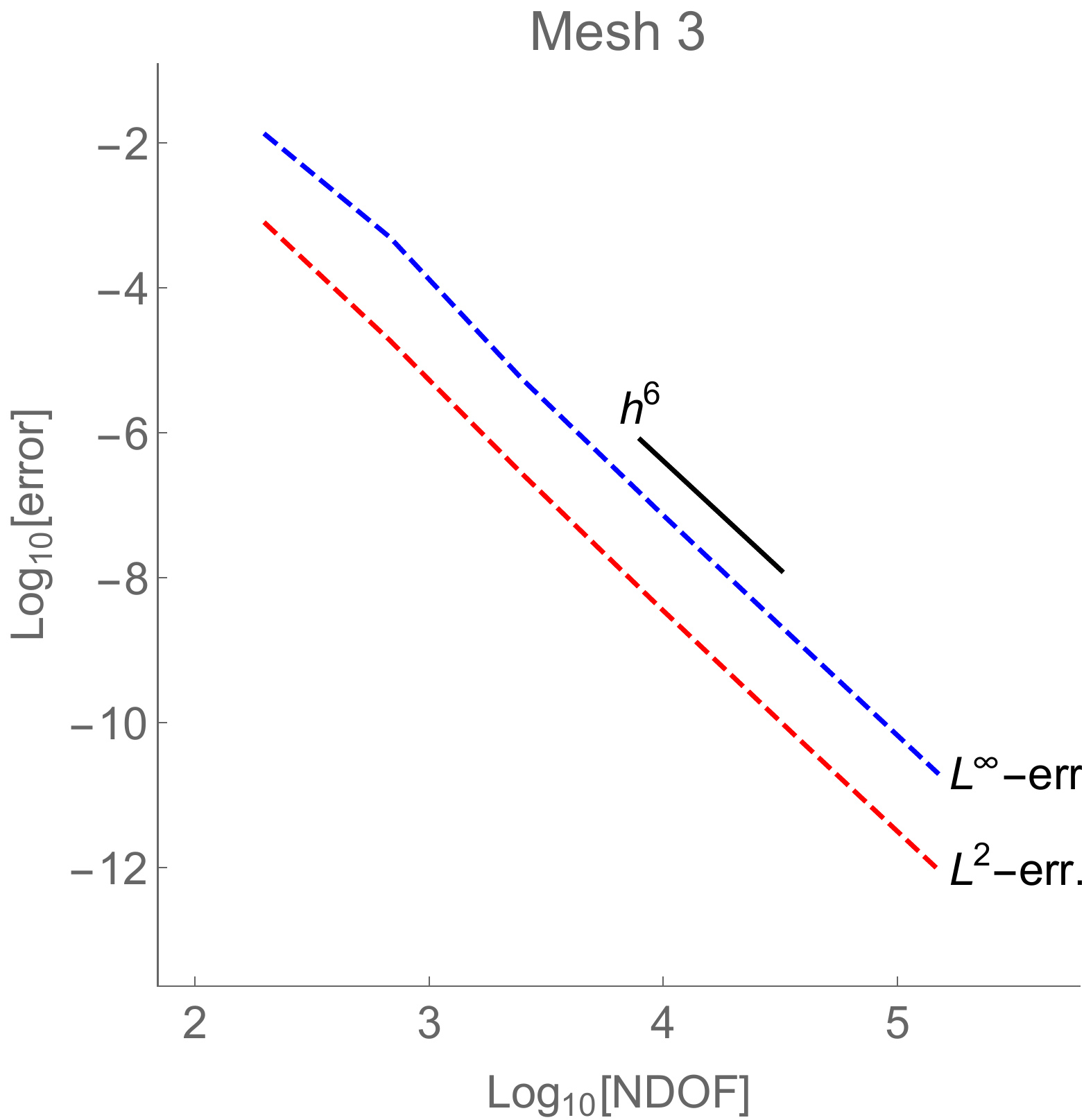} \\
\multicolumn{3}{c}{Performing $L^2$-approximation: resulting $L^{\infty}$-errors and resulting relative $L^2$-errors}\\
\includegraphics[width=5.0cm,clip]{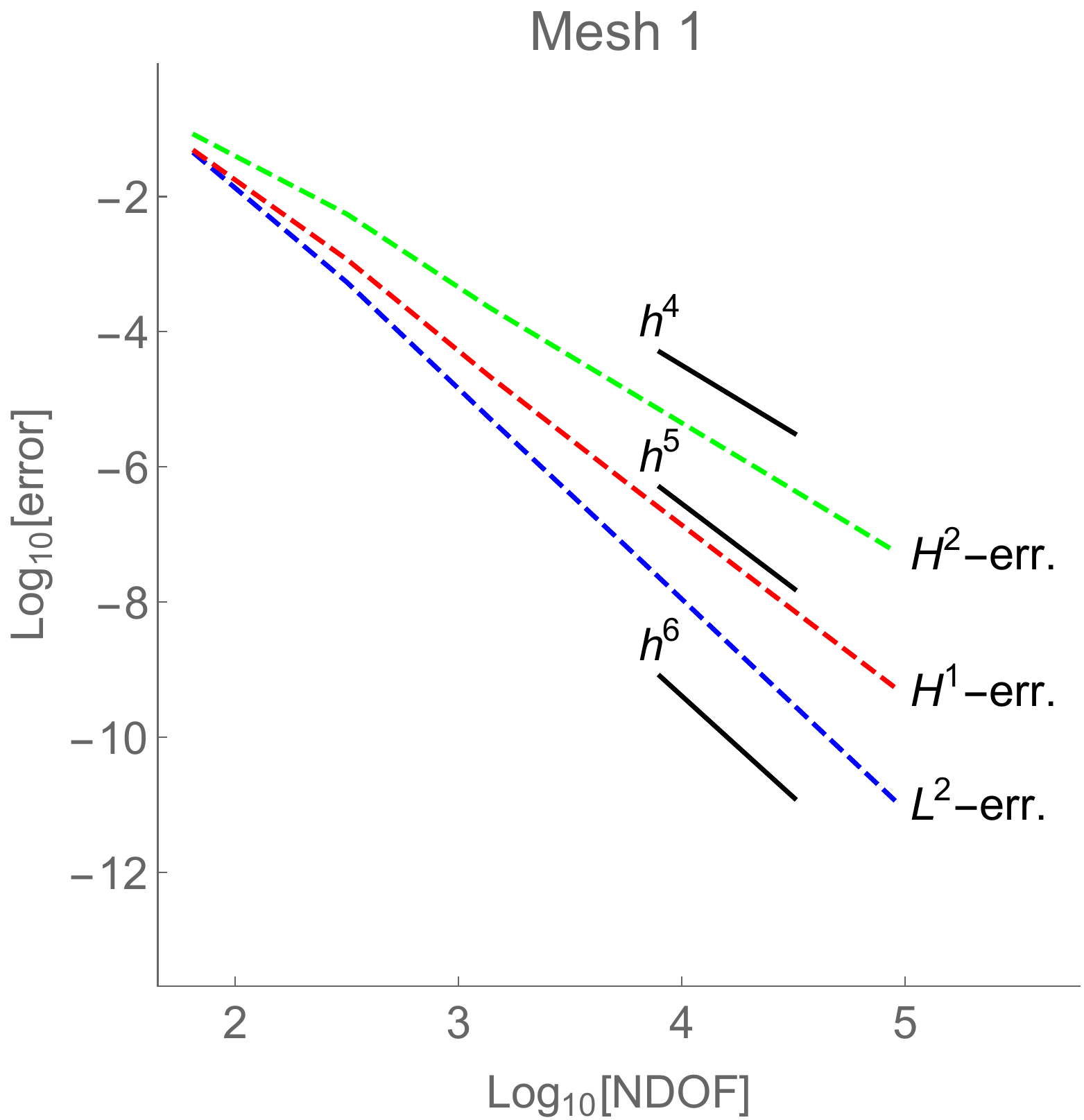} &
\includegraphics[width=5.0cm,clip]{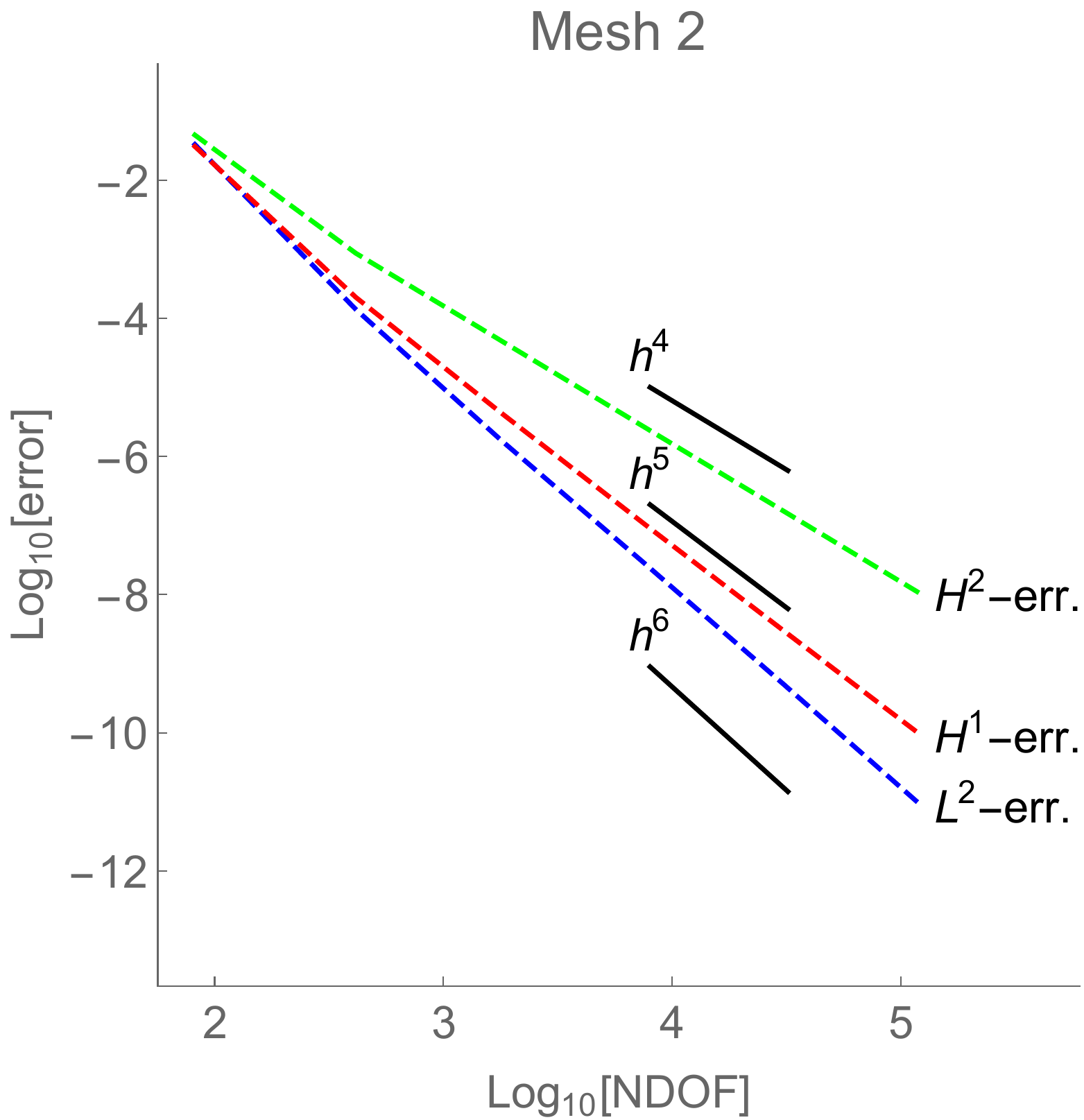} &
\includegraphics[width=5.0cm,clip]{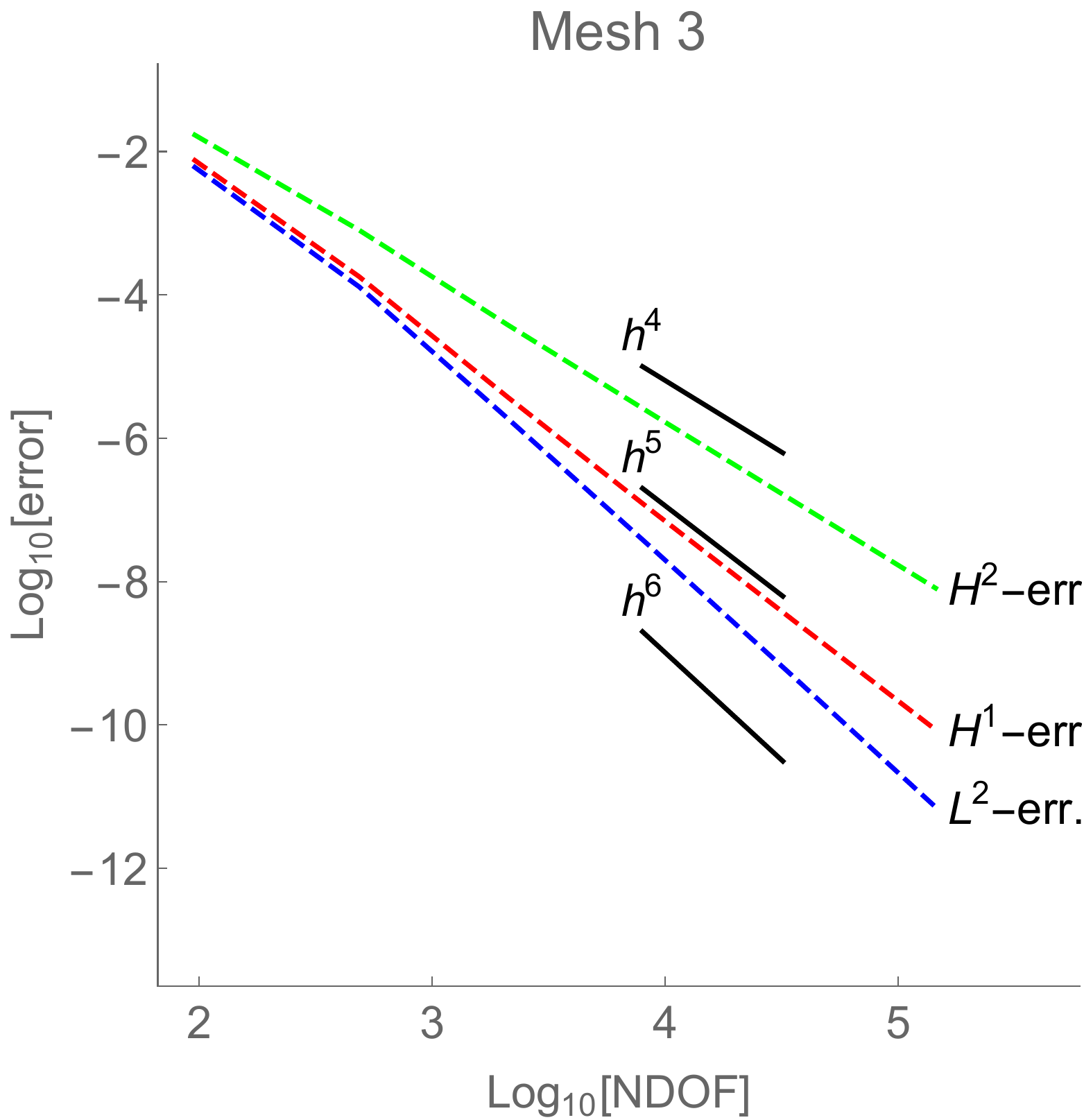} \\
\multicolumn{3}{c}{Solving the biharmonic equation: resulting relative $L^2$-, $H^1$- and $H^2$-errors}
\end{tabular}
\caption{Performing $L^2$-approximation and solving the biharmonic equations for the exact solutions~\eqref{eq:exact_solutions} 
on the Meshes~$1$--$3$ from 
Fig.~\ref{fig:meshes_examples-Mesh1}--\ref{fig:meshes_examples-Mesh3} 
and the resulting errors.
}
\label{fig:examples}
\end{figure}

\clearpage

\section{Conclusions}
\label{sec:Conclusion}

We studied a construction of $C^1$ splines over mixed triangle and quadrilateral meshes for polynomial degrees $\pd\geq 5$. The degrees of freedom are given by $C^2$-data at the vertices, point data and normal derivative data at suitable points along the edges as well as additional point data in the interior of the elements. The resulting space is $C^1$ globally and $C^2$ at all vertices. The degrees of freedom define a stable projection operator which, together with the local polynomial reproduction, yields optimal approximation error bounds with respect to the mesh size for $L^\infty$, $L^2$ as well as Sobolev norms $H^1$ and $H^2$. 

In this paper we only considered planar (bi)linear elements. Extensions to domains with curved boundaries or to surface domains were already discussed separately in case of  quadrilateral meshes as well as triangle meshes. For quadrilateral meshes, \cite{Ma01,BeMa14} provided extensions to elements with curved boundaries, i.e. elements where one boundary edge is curved and the other three are straight. Extensions to surface domains were briefly discussed for spline patches in~\cite{CoSaTa16,KaSaTa17b}. Constructions of $C^1$ surfaces of arbitrary topology using triangle meshes where developed in~\cite{HaBo00}. To extend the construction to mixed surface meshes and mixed meshes with curved boundaries is of vital interest for the applicability of the proposed elements in a general isogeometric framework based on CAD geometries. However, to work out the details of such extensions in the mixed case requires further studies which we intend to do in the future.

\section*{Acknowledgments}
The authors wish to thank the anonymous reviewers for their comments that helped to improve the paper. 
This paper was developed within the Scientific and Technological Cooperation ``Splines in Geometric Design and Numerical Analysis'' between Austria and Slovenia 2018-19, funded by 
the OeAD under grant nr. SI 28/2018 and by ARRS bilateral project nr. BI-AT/18-19-012. 

The research of M. Kapl is partially supported by the Austrian Science Fund (FWF) through the project P~33023. The research of T. Takacs is partially supported by the Austrian 
Science Fund (FWF) and the government of Upper Austria through the project P~30926-NBL. 
The research of M.~Knez is partially supported by the research program P1-0288 and the research project J1-9104 from ARRS, Republic of Slovenia.
The research of J.~Gro\v{s}elj is partially supported by the research program P1-0294 from ARRS, Republic of Slovenia.
The research of V.~Vitrih is partially supported by the research program P1-0404 from ARRS, Republic of Slovenia. This support is gratefully acknowledged.

\end{document}